\newtheorem{thm}{\bf Theorem}[section]
\newtheorem{cor}[thm]{\bf Corollary}
\newtheorem{lem}[thm]{\bf Lemma}
\newtheorem{defn}[thm]{\bf Definition}
\newtheorem{rem}[thm]{\bf Remark}
\numberwithin{equation}{section}
\newcommand{\be}{\begin{equation}}
\newcommand{\ee}{\end{equation}}
\newcommand{\bee}{\begin{equation*}}
\newcommand{\eee}{\end{equation*}}
\newcommand{\bea}{\begin{eqnarray}}
\newcommand{\eea}{\end{eqnarray}}
\newcommand{\Bea}{\begin{eqnarray*}}
\newcommand{\Eea}{\end{eqnarray*}}
\def\R{{\mathbb R}}
\def\Z{{\mathbb Z}}
\def\ol{\overline}
\newcommand{\wh}{\widehat}
\def\CF{\mathcal {F}}
\def\CH{\mathcal {H}}
\def\CM{\mathcal {M}}
\def\CQ{\mathcal {Q}}
\def\CR{\mathcal {R}}
\def\CS{\mathcal {S}}
\begin{document}

\title[$L^p$-Fourier asymptotics, Hardy-type inequality and fractal measures]
{$L^p$-Fourier asymptotics, Hardy-type inequality and fractal measures}
\author{K. S. Senthil Raani}

\address{Department of Mathematics\\
Indian Institute of Science\\
Bangalore-560 012}
\email{kssenthilraani@gmail.com}

\date{\today}
\keywords{supports of Fourier transform, Hausdorff dimension, Minkowski content, Salem sets, Ahlfors-David regular sets, Hardy type inequality.}
\subjclass{Primary: 42B10; Secondary: 37F35, 40E05, 28A78 }


\begin{abstract}
Suppose $\mu$ is an $\alpha$-dimensional fractal measure for some $0<\alpha<n$. Inspired by the results in \cite{Strichartz}, we discuss the $L^p$-asymptotics of the Fourier transform of $fd\mu$ by estimating bounds of

$$\underset{L\rightarrow\infty}{\liminf}\ \frac{1}{L^k} \int_{|\xi|\leq L}\
|\wh{fd\mu}(\xi)|^pd\xi,$$

 for $f\in L^p(d\mu)$ and $2<p<2n/\alpha$. In a different direction, we prove a Hardy type inequality, that is,

 $$\int\frac{|f(x)|^p}{(\mu(E_x))^{2-p}}d\mu(x)\leq C\ \underset{L\rightarrow\infty}{\liminf} \frac{1}{L^{n-\alpha}} \int_{B_L(0)}         |\wh{fd\mu}(\xi)|^pd\xi$$
 where $1\leq p\leq 2$ and $E_x=E\cap(-\infty,x_1]\times(-\infty,x_2]...(-\infty,x_n]$ for $x=(x_1,...x_n)\in\R^n$ generalizing the one dimensional results in \cite{Hudson}.
\end{abstract}
\maketitle


\section{Introduction}
\setcounter{equation}{0}

One of the basic questions in
harmonic analysis is to study the decay properties of the Fourier
transform of measures or distributions supported on thin sets
(sets of Lebesgue measure zero) in $\mathbb{R}^n$. Let $f\in C_c^{\infty}(\R^n)$ and $d\sigma$ be
the surface measure on the sphere $S^{n-1}\subset\R^n$. Then using the properties of Bessel
functions,
$$|\wh{fd\sigma}(\xi)|\leq\ C\ (1+|\xi|)^{-\frac{n-1}{2}}.$$
It follows that $\wh{fd\sigma}\in L^p(\R^n)$ for all
$p>\frac{2n}{n-1}$. This result can be extended to compactly
supported measures on $(n-1)$-dimensional manifolds with
appropriate assumptions on the curvature. On the other hand, the
results in \cite{AgranovskyNaru} show that $\wh{fd\sigma}\notin
L^p(\R^n)$ for $1\leq p\leq \frac{2n}{n-1}$. Similar results are
known for measures supported in lower dimensional manifolds in
$\R^n$ under appropriate curvature conditions (See page 347-351 in \cite{Stein}). However, the
picture for fractal measures is far from complete.\\

In \cite{Raani}, we related the integrability of the function
and the fractal dimension of the support of its Fourier transform
by proving the following theorem and its sharpness.\\

\begin{thm}\cite{Raani} \label{A}  Let $f\in L^1\cap L^p(\R^n)$
be such that $\wh{f}$ is supported in a set $E\subset\R^n$.
Suppose $E$ is a set of finite $\alpha$-packing measure,
$0<\alpha<n$. Then $f$
is identically zero, provided $p\leq 2n/\alpha$.
\end{thm}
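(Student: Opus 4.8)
A first remark is that the stated hypotheses already admit a short argument: since $f\in L^1(\R^n)$, its transform $\wh f$ is continuous, and a set of finite $\alpha$-packing measure with $\alpha<n$ is Lebesgue-null, so $\wh f$ vanishes on the dense complement of $E$ and hence everywhere. What this shortcut hides, and what I would really aim to capture, is the sharpness of the exponent $2n/\alpha$; this is the mechanism that persists when $L^1$ is relaxed to $L^p$ alone and that explains why the packing measure is the relevant hypothesis. I would therefore organise the proof around a dichotomy: a local $L^2$ lower bound on $f$ forced by the thinness of $\mathrm{supp}\,\wh f$, set against an upper bound coming from $f\in L^p$, the two being compatible only when $p\ge 2n/\alpha$.

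The first ingredient converts the metric hypothesis into a measure estimate. From finite $\alpha$-packing measure I would extract, through the associated packing and covering numbers $N(E,\delta)\lesssim\delta^{-\alpha}$, the Minkowski-type bound $|E_\delta|\lesssim\delta^{\,n-\alpha}$ for the $\delta$-neighbourhood $E_\delta$, valid at least along a sequence $\delta_j\downarrow 0$. Working along such a sequence is exactly what the $\liminf$ in the later results reflects, and I would run the entire argument along the radii $L_j=\delta_j^{-1}\to\infty$.

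For the upper bound, Hölder's inequality gives, when $2\le p\le 2n/\alpha$, $\int_{|x|\le L}|f|^2\le\|f\|_p^2\,|B_L|^{1-2/p}\lesssim L^{\,n-2n/p}$. The heart of the matter is the matching lower bound: if $f\ne 0$ then $\int_{|x|\le L}|f|^2\gtrsim L^{\,n-\alpha}$ along the sequence above. To obtain it I would first mollify, replacing $f$ by $g=f*\psi_t$ with $\wh{\psi_t}$ a fixed bump, so that $\wh g$ is compactly supported inside $E$ and $g$ is smooth and bounded while $g\to f$ in $L^p$; after a translation I may assume $g(0)\ne 0$. Localising in space by $\chi_L(x)=\chi(x/L)$ and passing to the frequency side by Plancherel, $\widehat{g\chi_L}=\wh g*\wh{\chi_L}$ is concentrated in the neighbourhood $E_{K/L}$, whose measure is $\lesssim L^{-(n-\alpha)}$; a Cauchy–Schwarz estimate on that set, normalised by $\int\widehat{g\chi_L}=(g\chi_L)(0)=g(0)$, then produces $\int_{|x|\lesssim L}|g|^2\gtrsim |g(0)|^2\,L^{\,n-\alpha}$.

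The step I expect to be genuinely delicate is the control of the tail of $\widehat{g\chi_L}$ away from $E_{K/L}$: because $\chi$ has compact support its transform is not band-limited, so $\wh g*\wh{\chi_L}$ leaks mass off $E$, and this leakage must be shown negligible uniformly along $L_j$ before the Cauchy–Schwarz step is legitimate. Balancing it against the neighbourhood measure through the dilation constant $K$ and the smoothing parameter $t$ is, I anticipate, the crux of the proof. Granting the two bounds, comparison yields $L^{\,n-\alpha}\lesssim L^{\,n-2n/p}$ along $L_j\to\infty$, which is impossible for $p<2n/\alpha$; the endpoint $p=2n/\alpha$ is recovered by the usual refinement that the $L^p$-mass of $f$ escapes to infinity, improving the upper bound to $o(L^{\,n-2n/p})$ on the outer annuli. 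Hence $g(0)=0$, and since the same conclusion holds after every translation and for every mollification, $f\equiv 0$.
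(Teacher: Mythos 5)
The paper never proves Theorem \ref{A}: it is imported wholesale from \cite{Raani}, so there is no in-paper proof to compare against and your proposal has to be judged on its own terms. On those terms, your opening remark is correct and is, for the statement exactly as quoted, already a complete proof: finite $\alpha$-packing measure with $\alpha<n$ gives $\CH_{\alpha}(E)<\infty$, hence $|E|=0$; since $f\in L^1$, $\wh{f}$ is continuous, so $\{\wh{f}\neq 0\}$ is an open set contained in a Lebesgue-null set, hence empty, and $f\equiv 0$. (Similarly, for $p\le 2$ the $L^1$ hypothesis is not even needed: Hausdorff--Young makes $\wh{f}$ a function vanishing a.e.) Everything after your first paragraph therefore concerns a strictly stronger theorem --- the one where $L^1$ is dropped and the exponent $2n/\alpha$ carries the content --- and it is there that the gaps lie.

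For that stronger statement there is one genuine gap and one step you flag but do not resolve. The gap: finite packing \emph{measure} does not yield $N(E,\delta)\lesssim\delta^{-\alpha}$, nor $|E(\delta)|\lesssim\delta^{n-\alpha}$, not even along a sequence $\delta_j\downarrow 0$. Packing measure is built from the packing premeasure by countable decomposition, and your bound holds only for the premeasure (equivalently, finite upper Minkowski content): a countable dense subset of a ball has zero $\alpha$-packing measure for every $\alpha>0$, yet its $\delta$-neighbourhoods have Lebesgue measure bounded below. Bridging measure to premeasure needs an extra idea, e.g.\ write $\mathrm{supp}\,\wh{f}\subset\cup_i E_i$ with each $E_i$ of finite premeasure, apply Baire category to the closed set $\mathrm{supp}\,\wh{f}$ to find a ball $B$ with $\emptyset\neq B\cap\mathrm{supp}\,\wh{f}$ contained in a set of finite premeasure, localize by passing to $f*\check{\eta}$ with $\eta\in C_c^{\infty}(B)$, run your argument on the localized function, and contradict $B\cap\mathrm{supp}\,\wh{f}\neq\emptyset$. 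Without this, you have proved the theorem only for sets of finite upper Minkowski content. The unresolved step is the ``leakage'': with $\chi$ compactly supported in space, $\wh{g}\ast\wh{\chi_L}$ is supported nowhere near $E$, the Cauchy--Schwarz step is unavailable, and there is no ``balancing'' of $K$ and $t$ that restores it. The cure is to reverse your choice of cutoff: take $\chi$ Schwartz with $\wh{\chi}$ supported in the unit ball (so $\chi$ itself is not compactly supported); then $\wh{g\chi_L}$ is genuinely supported in the $(K/L)$-neighbourhood of $\mathrm{supp}\,\wh{g}$, and the price is paid on the space side, where the rapid decay of $\chi$ is summable against the dyadic H\"older bounds $\int_{B_{2^jL}}|g|^2\lesssim (2^jL)^{n-2n/p}$. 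This band-limited-cutoff device is exactly the one this paper uses in its own arguments (the positive Schwartz $\phi$ with $\mathrm{supp}\,\wh{\phi}$ in the unit ball, in the proofs of Theorems \ref{thm-cohe-str} and \ref{ThmHud1p2}). With those two repairs, your dichotomy $L^{n-\alpha}\lesssim L^{n-2n/p}$ and the endpoint refinement at $p=2n/\alpha$ do go through as you describe.
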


Inspired by results in \cite{Strichartz}, we look for quantitative
estimates for Fourier transform of fractal measures. Let $E$ be a
compact set of finite upper Minkowski's $\alpha$-content. In this paper, we obtain certain quantitative
versions of Theorem \ref{A} by estimating the
$L^p$ norm of the Fourier transform of appropriate fractal measures $\mu$ supported in $E$ over
a ball centered at origin with large radius for $1\leq p <
2n/\alpha$, that is, by obtaining bounds for the
following:
  \bee
    \underset{L\rightarrow\infty}{\limsup} \frac{1}{L^k}     \int_{|\xi|\leq    L}|\wh{fd\mu}(\xi)|^p
    d\xi,\label{limsupBall}
  \eee
where $k$ depends on $\alpha,\ p$ and $n$.\\

If $\mu$ is a compactly supported locally uniformly
$\alpha$-dimensional measure, that is, $\mu(B_r(x))\leq
ar^{\alpha}$ for all $0<r\leq 1$ and some non-zero finite
constants $a$, then in \cite{Strichartz}, Strichartz proved that
there exists constant $C$ independent of $f$ such that
   \be
    \ \underset{L\rightarrow\infty}{\limsup} \frac{1}{L^{n-\alpha}}\int_{|\xi|\leq L} |\wh{fd\mu}(\xi)|^2    d\xi \leq C\|f\|^2_{L^2(d\mu)} .
    \label{Str-loc}
   \ee
   The authors in \cite{Lau} and \cite{LauWang} have generalized (\ref{Str-loc}) for a general class of measures. If a locally uniformly $\alpha$-dimensional measure $\mu$ is supported in a quasi $\alpha$-regular set,
then in \cite{Strichartz} the author proved that there exists a non-zero constant independent of $f$ such that
  \be
   \|f\|^2_{L^2(d\mu)} \leq C\ \underset{L\rightarrow\infty}{\liminf} \frac{1}{L^{n-\alpha}}\int_{|\xi|\leq L} |\wh{fd\mu}(\xi)|^2 d\xi.
   \label{Str-quasi}
   \ee

 Using Holder's inequality, we note from (\ref{Str-quasi}) that if
$f\in L^2(d\mu)$, where $\mu$ is a locally uniformly
$\alpha$-dimensional measure, then for $1\leq p\leq 2$,

\be
   \|f\|^2_{L^2(d\mu)} \geq C\ \underset{L\rightarrow\infty}{\limsup}\frac{1}{L^{n-\alpha p/2}}\int_{|\xi|\leq L} |\wh{fd\mu}(\xi)|^pd\xi.\label{str-holder}
 \ee

Suppose $\mu$ is a finite measure supported on a set $E$ such that $\alpha$-dimensional upper Minkowski content of non-zero $\mu$-measure bounded subsets $S$ of $E$ is non-zero and bounded above by $\mu(S)$ for some $0<\alpha<n$, that is,
  $$\underset{\epsilon\rightarrow 0}{\limsup}|S(\epsilon)|\epsilon^{\alpha-n}\leq C\mu(S).$$
Note that $\mu$ need not be locally uniformly $\alpha$-dimensional measure. We prove (\ref{str-holder}) for $\mu$:\\

\begin{thm}\label{Mythm1} Let $f\in L^2(d\mu)$ be a
positive function. Then for $2\leq
p<2n/\alpha$,
        \be
            \int_{\R^n} {|f(x)|^{2}}d\mu(x)\leq\ C\ \underset{L\rightarrow\infty}{\liminf}\ \Big(\frac{1}{L^{n - \alpha p/2}} \int_{|\xi|\leq L}\
|\wh{fd\mu}(\xi)|^pd\xi\Big)^{2/p}.\label{Mythm1eq}
        \ee
        \end{thm}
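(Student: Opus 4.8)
The plan is to reduce the statement to the case $p=2$ and then recover the general $p$ by a single application of Hölder's inequality on the ball $|\xi|\le L$. Concretely, I would first isolate the inequality
$$\|f\|_{L^2(d\mu)}^2\le C\,\underset{L\to\infty}{\liminf}\ \frac{1}{L^{n-\alpha}}\int_{|\xi|\le L}|\wh{fd\mu}(\xi)|^2\,d\xi,$$
call it $(*)$; this is the analogue under the present Minkowski-content hypothesis of Strichartz's lower bound (\ref{Str-quasi}). Granting $(*)$, for each $L$ Hölder's inequality with exponents $p/2$ and $p/(p-2)$ gives $\int_{|\xi|\le L}|\wh{fd\mu}|^2\le\big(\int_{|\xi|\le L}|\wh{fd\mu}|^p\big)^{2/p}\,|B_L|^{1-2/p}$, and since $|B_L|^{1-2/p}=c_n L^{n-2n/p}$ while $L^{n-2n/p}/L^{n-\alpha}=L^{\alpha-2n/p}=\big(L^{n-\alpha p/2}\big)^{-2/p}$, the powers of $L$ combine exactly into the normalization $L^{n-\alpha p/2}$ of (\ref{Mythm1eq}). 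Because $2/p\le 1$ the map $x\mapsto x^{2/p}$ is increasing and continuous, so passing to $\liminf_L$ preserves the inequality, and $(*)$ yields (\ref{Mythm1eq}) with a new constant; here $2\le p<2n/\alpha$ is used precisely to keep $n-\alpha p/2>0$.

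The heart of the matter is therefore $(*)$. I would mollify $fd\mu$ at scale $\epsilon=1/L$ by a fixed nonnegative band-limited kernel $\rho$ with $\int\rho=1$ whose Fourier transform $\widehat\rho$ is supported in the unit ball (for instance a tensor product of one-dimensional Fejér kernels, rescaled so that $\mathrm{supp}\,\widehat\rho\subset\overline{B_1}$), and set $F_\epsilon=(fd\mu)*\rho_\epsilon$ with $\rho_\epsilon(x)=\epsilon^{-n}\rho(x/\epsilon)$. Then $\widehat{F_\epsilon}=\wh{fd\mu}\,\widehat\rho(\epsilon\,\cdot)$ is supported in $|\xi|\le L$, so by Plancherel $\|F_\epsilon\|_{L^2(\R^n)}^2\le\|\widehat\rho\|_\infty^2\int_{|\xi|\le L}|\wh{fd\mu}|^2\,d\xi$ with no high-frequency tail. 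For the matching lower bound on $\|F_\epsilon\|_{L^2}^2$ I would fix a finite Borel partition $E=\bigsqcup_i S_i$ and write $F_\epsilon=\sum_i F_{i,\epsilon}$ with $F_{i,\epsilon}=(f\chi_{S_i}d\mu)*\rho_\epsilon\ge 0$ (using $f\ge 0$, $\rho\ge 0$); since all cross terms $\langle F_{i,\epsilon},F_{j,\epsilon}\rangle$ are then nonnegative, $\|F_\epsilon\|_{L^2}^2\ge\sum_i\|F_{i,\epsilon}\|_{L^2}^2$. Fixing a radius $K$ with $\eta(K):=\int_{|u|>K}\rho<1$, the mass of $F_{i,\epsilon}$ outside the neighborhood $S_i(K\epsilon)$ is at most $\eta(K)\int_{S_i}f\,d\mu$, so Cauchy--Schwarz on $S_i(K\epsilon)$ gives $\|F_{i,\epsilon}\|_{L^2}^2\ge(1-\eta(K))^2\big(\int_{S_i}f\,d\mu\big)^2/|S_i(K\epsilon)|$.

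It remains to take limits. For a fixed partition and fixed $K$, the hypothesis $\limsup_{\epsilon\to 0}|S_i(\epsilon)|\epsilon^{\alpha-n}\le C\mu(S_i)$ gives $\liminf_{L\to\infty}L^{\alpha-n}/|S_i(K\epsilon)|\ge K^{\alpha-n}/(C\mu(S_i))$, so combining with the Plancherel bound and Fatou's lemma, $\|\widehat\rho\|_\infty^2\,\liminf_L L^{\alpha-n}\int_{|\xi|\le L}|\wh{fd\mu}|^2\ge c(K)\sum_i\big(\int_{S_i}f\,d\mu\big)^2/\mu(S_i)$ for a fixed positive constant $c(K)$. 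Finally I would take the supremum over all partitions: since $\sum_i\big(\int_{S_i}f\,d\mu\big)^2/\mu(S_i)=\|\mathbb{E}[f\mid\mathcal F]\|_{L^2(d\mu)}^2$ increases to $\|f\|_{L^2(d\mu)}^2$ as the partition refines, this yields $(*)$.

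The step I expect to be the main obstacle is the very construction of $F_\epsilon$, because two requirements pull in opposite directions: to avoid a high-frequency tail — which cannot be controlled under a mere $\liminf$ hypothesis without a global upper bound on $\int_{|\xi|\le L}|\wh{fd\mu}|^2$ — I need $\widehat\rho$ compactly supported, forcing $\rho$ to be non-compactly supported in space; yet the neighborhood Cauchy--Schwarz and the Minkowski-content estimate want $\rho$ concentrated near the origin. Reconciling these is exactly why I use a band-limited $\rho$ together with a fixed (non-shrinking) spatial cut-off $K$, paying only the harmless constant $K^{n-\alpha}/(1-\eta(K))^2$. Note finally that positivity of $f$ is indispensable, both to discard the cross terms and to pass from $\big(\int_{S_i}f\,d\mu\big)^2$ to $\int f^2\,d\mu$ in the supremum over partitions, so the positivity assumption on $f$ cannot be dropped by this method.
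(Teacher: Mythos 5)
Your proof is correct, and its skeleton is the paper's own: there, Theorem \ref{Mythm1} is obtained as the $p=2$ case (Theorem \ref{thm-cohe-str}) followed by precisely your H\"older step on $B_L$ (Corollary \ref{cor-cohe-str}), and the $p=2$ case is likewise proved with a positive mollifier whose Fourier transform is supported in the unit ball, a finite decomposition of $E$ into pieces of positive $\mu$-measure, Cauchy--Schwarz over an $O(1/L)$-neighborhood of each piece combined with the Minkowski-content hypothesis, and Plancherel. Where you differ is in three local devices, each of which is sound. First, you keep the localized convolutions $(f\chi_{S_i}d\mu)*\rho_\epsilon$ and drop the nonnegative cross terms, while the paper uses positivity in the opposite direction: it enlarges each localized integral to the global convolution $\phi_L*fd\mu$ and then counts the at most $2^n$-fold overlap of the sets $Q_k^L$. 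Second, your fixed cut-off radius $K$ with tail mass $\eta(K)<1$ replaces the paper's corner normalization $\int_{A_{r_1}(0)}\phi(x)\,dx=2^{-(n+1)}$ and its one-sided neighborhoods; both serve to capture a fixed fraction of $\int_{S_i}f\,d\mu$ inside a set whose Lebesgue measure the Minkowski hypothesis controls. Third, you recover $\|f\|^2_{L^2(d\mu)}$ by a supremum over refining partitions (martingale convergence), whereas the paper fixes cubes of side $\epsilon$ and shows the fluctuation term $e_\epsilon$ tends to zero by approximating $f$ with a uniformly continuous function; these are the same fact in different packaging. Your variants make for a cleaner write-up of this particular theorem (no overlap counting, no explicit error estimate); what the paper's cube-and-error scaffolding buys is reusability, since essentially the same decomposition is invoked again in the proofs of Theorem \ref{thm-alpha-density} and Theorem \ref{ThmHud1p2}, where the cube structure is indispensable.
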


Next we prove that the above estimate (\ref{Mythm1eq}) is optimal.
\begin{thm}\label{2}Let $u$ be a tempered
distribution supported in a set $E$ such that $\alpha$-dimensional upper Minkowski content of all non-zero $\mu$-measure bounded subsets $S$ of $E$ is non-zero and bounded above by $\mu(S)$ where $\mu$ is a finite measure supported on $E$. Then for $2\leq p <2n/\alpha$,
    \bee
        \underset{L\rightarrow\infty}{\limsup}\ \frac{1}{L^{n-\frac{\alpha p}{2}}} \int_{|\xi|\leq L} |\wh{u}(\xi)|^p d\xi
        <\infty.
    \eee
Then $u$ is an $L^2$ density $\ u_0\ d\mu$ on $E$
and
    \bee
        \Big(\int_{E}|u_0|^2d\mu\Big)^{p/2} \leq\ C\ \underset{L\rightarrow\infty}{\limsup}\ \frac{1}{L^{n-\frac{\alpha p}{2}}}
        \int_{|\xi|\leq L} |\wh{u}(\xi)|^p d\xi <\infty.
    \eee
\end{thm}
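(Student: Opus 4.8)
The plan is to realize $u$ as a bounded conjugate-linear functional on $L^2(d\mu)$ and then invoke the Riesz representation theorem. First observe that finiteness of
$$A:=\limsup_{L\to\infty}\frac{1}{L^{n-\alpha p/2}}\int_{|\xi|\leq L}|\wh u(\xi)|^p\,d\xi$$
forces $\wh u\in L^p_{\mathrm{loc}}(\R^n)$ and gives $\int_{|\xi|\leq L}|\wh u|^p\leq (A+1)\,L^{n-\alpha p/2}$ for all large $L$. Fix a Schwartz function $\phi$ with $\wh\phi$ supported in $\{|\xi|\leq 2\}$ and $\wh\phi\equiv1$ on $\{|\xi|\leq1\}$, and put $u_L=u*\phi_{1/L}$ with $\phi_{1/L}(x)=L^n\phi(Lx)$, so that $\wh{u_L}=\wh u\,\wh\phi(\cdot/L)$ is supported in $\{|\xi|\leq2L\}$ and $u_L\to u$ in $\CS'(\R^n)$.

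The heart of the argument is the uniform estimate
$$\Big|\int u_L\,g\,dx\Big|\leq C\,A^{1/p}\Big(\int|g|^2\,d\mu\Big)^{1/2},\qquad g\in C_c^\infty(\R^n),$$
valid along a suitable sequence $L\to\infty$, which I would prove by Cauchy--Schwarz on the spatial side. Up to a tail that is negligible by the rapid decay of $\phi$, the pairing is concentrated on the $1/L$-neighbourhood $E_{1/L}$ of $E$, so
$$\Big|\int_{E_{1/L}}u_L\,g\,dx\Big|\leq\Big(\int_{\R^n}|u_L|^2\,dx\Big)^{1/2}\Big(\int_{E_{1/L}}|g|^2\,dx\Big)^{1/2}.$$
For the first factor, Plancherel together with H\"older's inequality on the ball $B_{2L}$ of volume $\sim L^n$ gives
$$\int_{\R^n}|u_L|^2\,dx\leq\int_{|\xi|\leq2L}|\wh u|^2\,d\xi\leq\Big(\int_{|\xi|\leq2L}|\wh u|^p\Big)^{2/p}|B_{2L}|^{1-2/p}\lesssim A^{2/p}\,L^{\,n-\alpha},$$
the exponents collapsing to $L^{n-\alpha}$. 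For the second factor I would use the Minkowski-content hypothesis in the form of the lower regularity bound $\mu(E\cap Q)\gtrsim L^{-\alpha}$ for the $1/L$-cubes $Q$ meeting $E$; since $g\in C_c^\infty$ is essentially constant at scale $1/L$, discretizing yields
$$\int_{E_{1/L}}|g|^2\,dx\approx L^{-n}\sum_{Q\cap E\neq\emptyset}|g(x_Q)|^2\lesssim L^{\alpha-n}\sum_Q|g(x_Q)|^2\mu(E\cap Q)\approx L^{\alpha-n}\int|g|^2\,d\mu.$$
Multiplying the two factors, the powers $L^{(n-\alpha)/2}$ and $L^{(\alpha-n)/2}$ cancel and leave the claimed $L$-independent bound.

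Having established this, I would let $L\to\infty$. Since $u_L\to u$ in $\CS'$ we get $\langle u,g\rangle=\lim_L\int u_L\,g\,dx$, so $|\langle u,g\rangle|\leq C\,A^{1/p}\,\|g\|_{L^2(d\mu)}$ for every $g\in C_c^\infty(\R^n)$. As $C_c^\infty$ is dense in $L^2(d\mu)$, the functional $g\mapsto\langle u,g\rangle$ extends to a bounded conjugate-linear functional on $L^2(d\mu)$ of norm $\leq C A^{1/p}$, and by the Riesz representation theorem there is $u_0\in L^2(d\mu)$ with $\langle u,g\rangle=\int g\,\ol{u_0}\,d\mu$ for all such $g$. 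This is exactly the assertion that $u=u_0\,d\mu$ on $E$, together with $\|u_0\|_{L^2(d\mu)}\leq C A^{1/p}$, i.e. $\big(\int_E|u_0|^2\,d\mu\big)^{p/2}\leq C^p A$.

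The Fourier estimate and the Riesz step are routine; the delicate point is the second factor in the key bound. The hypothesis controls the upper Minkowski content only as a $\limsup$, so the lower regularity $\mu(E\cap Q)\gtrsim\epsilon^\alpha$ must be extracted uniformly over the $\epsilon$-cubes meeting $E$ and along an appropriate sequence $\epsilon=1/L_j\to0$; one must also absorb the error from replacing $g$ by its cube averages, controlled by the modulus of continuity of $g$, as well as the Schwartz tail of $u_L$ outside $E_{1/L}$. Reconciling the $\limsup$ in the content hypothesis with a genuinely pointwise lower regularity of $\mu$ is where I expect the main work to lie.
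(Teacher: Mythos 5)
Your architecture coincides with the paper's proof of Theorem~\ref{thm-alpha-density}: mollify $u$, split $\langle u_L,g\rangle$ by Cauchy--Schwarz into a frequency factor of size $A^{2/p}L^{n-\alpha}$ and a neighbourhood factor of size $L^{\alpha-n}\|g\|_{L^2(d\mu)}^2$, cancel the powers of $L$, and conclude by Riesz representation; your frequency-side computation is correct. But the step you flag as delicate is not merely delicate -- it is unavailable. The uniform lower regularity $\mu(E\cap Q)\gtrsim L^{-\alpha}$ for all $1/L$-cubes $Q$ meeting $E$ does not follow from the hypothesis and is false in general: the hypothesis only yields $\mu(S)\geq \CM^{*\alpha}(S)/C$, and there is no lower bound of the form $\CM^{*\alpha}(E\cap Q)\gtrsim \ell(Q)^{\alpha}$. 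For instance, if $E$ is the union of an Ahlfors regular $\alpha$-set and a copy of it rescaled by a factor $s\ll 1/L$, with $\mu=\CH_{\alpha}|_E$, the pair $(E,\mu)$ satisfies the hypothesis with a uniform constant, yet a $1/L$-cube covering the small copy has $\mu(E\cap Q)\approx s^{\alpha}\ll L^{-\alpha}$; cubes meeting $E$ in a $\mu$-null set are worse still, since the hypothesis says nothing about them. The paper's proof needs no lower mass bound at all, because it works at two separated scales: cubes $Q_k$ are taken at a fixed coarse scale $\delta$ (so only finitely many have $\mu(Q_k\cap E)\neq 0$); on each, $\psi$ is replaced by its $\mu$-average, with $|\mathrm{avg}_k|^2\leq \mu(Q_k)^{-1}\int_{Q_k}|\psi|^2d\mu$ by Cauchy--Schwarz; and the content hypothesis enters only as the upper bound $|(Q_k\cap S)(\epsilon)|\leq 2C_n\,\mu(Q_k\cap S)\,\epsilon^{n-\alpha}$, valid for all $\epsilon$ below a threshold that is uniform in $k$ precisely because the cubes are finite in number. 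Sending $\epsilon\to0$ first and $\delta\to0$ second (the oscillation error vanishing by uniform continuity of $\psi$) gives exactly your second-factor inequality $\epsilon^{\alpha-n}\int_{S(\epsilon)}|\psi|^2dx\lesssim \|\psi\|_{L^2(d\mu)}^2 + e_\delta$. This two-scale averaging is the idea your sketch is missing.

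There is also a second gap you do not flag: the tail. Having put the compact support on the frequency side, $\phi$ is only Schwartz, and a priori $u$ is just a distribution of some finite order $N$, so the best available bound off the support is $|u_L(x)|\lesssim L^{n+N}(1+L\,d(x,E))^{-M}$. At distance comparable to $1/L$ the decay factor is $O(1)$, so the tail is only bounded by $L^{n+N}|E_{2/L}\setminus E_{1/L}|\,\|g\|_\infty\lesssim L^{N+\alpha}\|g\|_\infty$, which does not vanish; and widening the neighbourhood to $L^{-1+\gamma}$ to regain decay destroys the cancellation, since the second factor then yields only $L^{(\gamma-1)(n-\alpha)}$ against the first factor's $L^{n-\alpha}$. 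This is exactly why the paper makes the mirror-image choice: the mollifier $\chi_\epsilon$ is compactly supported in space, so $u_\epsilon$ is genuinely supported in the $\epsilon$-neighbourhood and there is no spatial tail, while the price -- non-compactness of $\wh{\chi}$ in frequency -- is paid in Lemma~\ref{lemMYhorm1}, whose dyadic decomposition closes because each dyadic frequency block is separately controlled by the hypothesis through the supremum over scales $\epsilon L>1$. If you wish to keep your cleaner frequency estimate, you still need a compact spatial localization of this kind; rapid decay alone does not suffice.
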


Related type of results for tempered distributions supported in a smooth manifold of dimension $d<n$ can be found in \cite{AgmonHormander}, where the authors proved that if $u$ is a tempered distribution such that $\wh{u}\in L^2_{loc}(\R^n)$,
$$\underset{L\rightarrow\infty}{\limsup}\frac{1}{L^{n-d}}\int_{|\xi|\leq L}|\wh{u}(\xi)|^2d\xi<\infty$$
 and the restriction of $u$ to an open subset $X$ of $\R^n$ is supported by a $C^1$ submanifold $M$ of codimension $k=n-d$, then it
is an $L^2$-density $u_0dS$ on $M$ and
$$\int_M|u_0|^2dS\leq C\underset{L\rightarrow\infty}{\limsup}\frac{1}{L^k}\int_{|\xi|\leq L} |\wh{u}(\xi)|^2d\xi,$$
where $C$ only depends on $n$.\\

In a different direction, we consider the results of Hudson and
Leckband in \cite{Hudson} that gives a Hardy type inequality
for discrete measures. Let $\|u\|_{B^p.a.p}^p
= \underset{L\rightarrow\infty}{\lim}\ L^{-1}\int_{-L}^L |u(x)|^p dx$, $fd\mu_0$ be the
zero-dimensional measure $f(x)=\sum_1^{\infty}\ c_k\delta(x-a_k)$
and let $1<p\leq 2$ where $c_k$
 is a sequence of complex numbers and $a_k$ is a sequence of real
numbers not necessarily increasing. Assume that $u(x)=\wh{fd\mu_0}(x)$. The authors in
\cite{Hudson} proved that if
$c_k^*$ denote the nonincreasing rearrangement of the sequence
$|c_k|$, then
\be
\sum_1^{\infty} \frac{|c_k|^p}{k^{2-p}} \leq \sum_1^{\infty} \frac{|c_k^*|^p}{k^{2-p}} \leq C\
\|u\|^p_{B^p.a.p}.\label{Hudsonzero}
\ee

To extend the above result from zero-dimensional sets to a fractional dimensional set, the authors in \cite{Hudson} defined $\alpha$-coherent set in $\R$: for $0<\alpha<1$, a set $E\subset\R^n$ of finite
$\alpha$-dimensional Hausdorff measure is called $\alpha$-coherent
if for all $x$,
 $$\underset{\epsilon\rightarrow0}{\limsup} |E_x^0(\epsilon)|\epsilon^{\alpha-n} \leq C_E\CH_{\alpha}(E_x^0),$$
where $E_x^0=\{y\in E: y\leq x $ and
$2^{-\alpha}\leq\underset{\delta\rightarrow0}{\limsup}
\frac{\CH_{\alpha}(E\cap(y-\delta,y+\delta))}{\delta^{\alpha}}\leq1\}$
and $E_x^0(\epsilon)$ denotes the $\epsilon$-distance set of
$E_x^0$.\\

Let
$E\subset\R$ be either an $\alpha$-coherent set or a quasi
$\alpha$-regular set of finite $\alpha$-dimensional Hausdorff
measure, for $0<\alpha<1$, and $f\in L^1(d\mu)$, where
$\mu=\CH_{\alpha}|_E$. Then the authors in \cite{Hudson} proved that
\be
\int_E \frac{|f(x)|}{\mu(E_x)}d\mu(x)\leq C\ \underset{L\rightarrow\infty}{\liminf} \frac{1}{L^{1-\alpha}}
\int_{-L}^L |\wh{fd\mu}(\xi)|d\xi,\label{Hudsonfractal}
\ee
where $C$ is a constant independent of
$f$.\\

Using the upper Minkowski content and finding a continuous analogue of the
arguments in \cite{Hudson}, we extend (\ref{Hudsonzero}) and (\ref{Hudsonfractal}) to $\R^n$ and also
generalize (\ref{Hudsonzero}) to any $\alpha$-dimensional measure ($0<\alpha<n$) and $n\geq 1$
with a slight modification in the hypothesis. For $E\subset\R^n$, let $$E_x=E\cap (-\infty,x_1]\times...\times(-\infty,x_n]$$ for$x=(x_1,...x_n)\in\R^n$:\\

\begin{thm}\label{3} Let $0<\alpha< n$. Let $\mu$ be a finite measure supported in the set $E$ such that $\alpha$-dimensional upper Minkowski content of all non zero $\mu$-measure bounded subsets $S$ of $E$ is non-zero and bounded by $\mu(S)$. Let $f\in L^p(d\mu)\ (1\leq p\leq 2)$ be a positive
function. Then there exists a constant $C$ independent of $f$ such
that
     \bee
         \int\frac{|f(x)|^p}{(\mu(E_x))^{2-p}}d\mu(x)\leq C\ \underset{L\rightarrow\infty}{\liminf} \frac{1}{L^{n-\alpha}} \int_{B_L(0)}         |\wh{fd\mu}(\xi)|^pd\xi.
    \eee
\end{thm}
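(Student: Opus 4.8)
The plan is to extract from the Fourier integral over $B_L(0)$ a lower bound that reproduces the weighted integral on the left, by dualizing against a carefully chosen family of positive test functions built from an approximate identity. First I would invoke the Minkowski-content hypothesis at scale $\epsilon=1/L$: for a bounded $S\subset E$ the assumption $\limsup_{\epsilon\to0}|S(\epsilon)|\epsilon^{\alpha-n}\leq C\mu(S)$ compares the $\mu$-measure of pieces of $E$ with the Lebesgue measure of their $(1/L)$-neighborhoods, and in particular yields a lower bound $\mu(B_{1/L}(x))\gtrsim L^{-\alpha}$ for balls centered on $E$. This is what converts the Euclidean normalization $L^{-(n-\alpha)}$ into the fractal scaling. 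I would then fix once and for all an even nonnegative Schwartz function $\phi$ with $\wh{\phi}\geq0$ and $\operatorname{supp}\wh{\phi}\subset B_1(0)$, and set $\phi_L(x)=L^n\phi(Lx)$, an approximate identity whose Fourier transform is supported in $B_L(0)$.

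Next I would set up the duality. Since $f\geq0$, for a net $\{x_j\}\subset E$ and nonnegative weights $\{\lambda_j\}$ the superposition $V=\sum_j\lambda_j\phi_L(\cdot-x_j)$ satisfies, by Parseval, $\int fV\,d\mu=\int_{B_L(0)}\wh{fd\mu}(\xi)\,\ol{w(\xi)}\,d\xi$ with $w(\xi)=\sum_j\lambda_j e^{-2\pi i x_j\cdot\xi}\wh{\phi}(\xi/L)$, a function supported in $B_L(0)$. Holder's inequality in the form $\int_{B_L}|\wh{fd\mu}|^p\geq\big|\int fV\,d\mu\big|^p\big/\|w\|_{L^{p'}(B_L)}^p$ then reduces matters to (i) bounding $\int fV\,d\mu$ from below and (ii) bounding $\|w\|_{L^{p'}(B_L)}$ from above. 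For (i), taking $\{x_j\}$ a maximal $(1/L)$-separated subset of $E$ and using that $\phi_L$ concentrates at scale $1/L$ together with the lower bound on $\mu(B_{1/L}(x_j))$, I obtain $\int fV\,d\mu\gtrsim L^{n-\alpha}\sum_j\lambda_j f(x_j)$, up to errors controlled by the oscillation of $f$ on $(1/L)$-balls, which are absorbed in the limit $L\to\infty$.

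The heart of the matter is (ii). Rescaling $\xi=L\eta$ gives $\|w\|_{L^{p'}(B_L)}^{p'}=L^n\int_{B_1}|\wh{\phi}(\eta)|^{p'}\big|\sum_j\lambda_j e^{2\pi i(Lx_j)\cdot\eta}\big|^{p'}\,d\eta$, an exponential sum with the $1$-separated frequencies $\{Lx_j\}$. Controlling this $L^{p'}$ norm (with $p'\geq2$) by the appropriate weighted $\ell$-norm of $\{\lambda_j\}$ is a large-sieve/Hausdorff--Young type estimate, and \textbf{this is the step I expect to be the main obstacle}: the crude bound obtained by interpolating the large sieve ($p'=2$) against the trivial sup bound ($p'=\infty$) is too lossy, and one must instead produce the sharp Hardy--Littlewood/Pitt weight. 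Ordering the net points by increasing value of $\mu(E_{x_j})$ so that $\mu(E_{x_j})\approx j\,L^{-\alpha}$, the weight $j^{p-2}$ furnished by such an estimate matches exactly the factor $\mu(E_x)^{p-2}$ on the left; this is also where the hypothesis $1\leq p\leq2$ is essential, since the direction of the Hausdorff--Young inequality that is needed requires $p\leq2$.

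Finally I would assemble the pieces. Optimizing over $\{\lambda_j\}$ — equivalently, summing by parts over the rearrangement induced by the values $\mu(E_{x_j})$ — converts the family of dual bounds into the single estimate $\sum_j f(x_j)^p\,\mu(E_{x_j})^{p-2}\,\mu(B_{1/L}(x_j))\lesssim L^{-(n-\alpha)}\int_{B_L}|\wh{fd\mu}|^p$, whose left-hand side is a Riemann-type sum for $\int f^p\,\mu(E_x)^{p-2}\,d\mu$. Passing to the $\liminf$ in $L$ and refining the net as $L\to\infty$ then yields the theorem, the positivity of $f$ being used throughout to keep all pairings nonnegative and to replace $|\cdot|$ by the quantity itself. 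As consistency checks I would verify the endpoints: at $p=2$ the weight disappears and the statement collapses to the $L^2$ lower bound already available from Theorem~\ref{Mythm1}, while at $p=1$, $n=1$ it specializes to the Hudson--Leckband inequality (\ref{Hudsonfractal}).
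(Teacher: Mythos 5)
Your scheme rests on a reading of the hypothesis that is not available. The assumption $\limsup_{\epsilon\to 0}|S(\epsilon)|\,\epsilon^{\alpha-n}\leq C\mu(S)$ is one-sided: it bounds the Lebesgue volume of $\epsilon$-neighborhoods \emph{from above} by $\mu$. It does not yield the small-ball lower bound $\mu(B_{1/L}(x))\gtrsim L^{-\alpha}$ that you extract from it in your first step; that is Ahlfors--David \emph{lower} regularity, and the clause ``the content is non-zero'' is purely qualitative (it rules out, e.g., atoms, but gives no quantitative rate). Since both your lower bound (i) on $\int fV\,d\mu$ and the final weight-matching $\mu(E_{x_j})\approx j\,L^{-\alpha}$ depend on this regularity, the whole assembly fails for measures that satisfy the hypothesis but have very non-uniform small-ball behavior. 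Note how the paper's proof avoids this: the hypothesis is only ever used in the legitimate direction, namely through H\"older's inequality in the form $\big|\int_{Q_k}f\,d\mu\big|^p\lesssim L^{np}\,|Q_kE_L|^{p-1}\int_{Q_kE_L}|\phi_L*fd\mu|^p\,dx$ combined with the volume bound $|Q_kE_L|\leq|(Q_k\cap E)(r/L)|\lesssim \mu(Q_k)(r/L)^{n-\alpha}$, so no lower regularity is ever needed.

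The second gap is the one you flag yourself: step (ii), the weighted estimate for the exponential sum $\sum_j\lambda_je^{2\pi i(Lx_j)\cdot\eta}$ in $L^{p'}$ with a Pitt/Hardy--Littlewood weight $j^{p-2}$, is exactly the content of the theorem, and for merely separated (non-lattice) frequencies in $\R^n$ with $p'>2$ no such off-the-shelf inequality exists; declaring it ``the main obstacle'' leaves the proof without its core. The paper's substitute for this step is Lemma \ref{lemHud1FT}: for a finite union of disjoint small cubes one constructs phase functions $f_i$ with $|\wh{f_i}|=\delta^{-n}/P_\delta$ on $\Delta_i^\delta$ and $\wh{f_i}\,g_L\geq0$, and then the damped iterative sums $F_i=\frac45F_{i-1}\exp(-|f_i|/4s^2)+f_i/20$, producing a single test function $F$ with $\|F\|_\infty\leq 1/4$ and $\delta^{-n}|g_L|\leq 40\,\mathrm{Re}(\wh{F}g_L)$ pointwise; this gives the $L^1$ inequality, and Stein's two-weight interpolation theorem (Theorem \ref{thmSteinInterpolation}) against Plancherel upgrades it to $L^p$, $1<p<2$. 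The weight $\mu(E_x)^{2-p}$ then emerges not from counting net points but from a dyadic decomposition of the cubes $Q_k$ according to the neighborhood volumes $|E'_k(\delta)|$, where $E'=E\cup C_\epsilon$ includes an auxiliary self-similar Cantor set inserted precisely to keep the denominators $\mu(E'_x)+\epsilon$ bounded below --- a difficulty (small or vanishing $\mu(E_x)$) your proposal never addresses. Your endpoint checks ($p=2$ recovering Theorem \ref{thm-cohe-str}, and $p=1$, $n=1$ recovering (\ref{Hudsonfractal})) are consistent but do not repair either gap.
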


We recollect some definitions and notations in the rest of this section from \cite{Cutler}, \cite{Falconer} and \cite{Mattila}. We study the $L^p$-asymptotics of the Fourier
transform of the fractal measures for $p \geq 2$ in
the Section 2 and for $1\leq p\leq2$, we prove generalized Hardy inequality in the Section 3.\\



Let $X$ be a metric space, $\CF$ a family of subsets of $X$ such that for every $\delta>0$ there are $E_1,E_2,...\in\CF$ such that diameter of $E_k$ is less than or equal to $\delta$ for all $k$ and $X=\cup_kE_k$. For $0<\delta\leq\infty$ and $A\subset X$, we define \textbf{$s$-dimensional Hausdorff measure} as

    \begin{equation*}
       \CH_s(A)=\ \underset{\delta\rightarrow0}{\lim}\ H_{\delta}^s(A),
    \end{equation*}
where
    \begin{equation*}
      \CH^s_{\delta}(A)=\inf\ \big\{\sum_{i=1}^{\infty} d(E_i)^s:A\subset\underset{i}{\cup}E_i, d(E_i)\leq\delta, E_i\in\CF\big\}
    \end{equation*}
and $d(E)$ denotes the diameter of the set $E$.

The \textbf{Hausdorff dimension} of a set $A$ is given by
    \Bea
       \dim_{H}(A)&=& \sup\ \{s:\CH_s(A)>0\}=\sup\ \{s:\CH_s(A)=\infty\}\\
          &=& \inf\ \{t:\CH_t(A)<\infty\}=\inf\ \{t:\CH_t(A)=0\}.
    \Eea

For a non-empty subset $A$ of $\R^n$, let $A(\epsilon)=\{x\in\R^n\ : \underset{y\in A}{\inf}\ |x-y|<\epsilon\}$ denote the closed $\epsilon$-neighborhood of $A$. Some authors call $A(\epsilon)$, the \textbf{$\epsilon$-parallel set of $A$} or \textbf{$\epsilon$-distance set of $A$}. Let $E$ be a non-empty bounded subset of $\R^n$. The \textbf{$\epsilon$-covering number} of $E$ denoted by $N(E,\epsilon)$, is the smallest number of open balls of radius $\epsilon$ needed to cover $E$. The \textbf{$\epsilon$-packing number} of $E$ denoted by $P(E,\epsilon)$ is the largest number of \textbf{disjoint} open balls of radius $\epsilon$ with centres in $E$. The \textbf{$\epsilon$-packing} of $E$ is any collection of disjoint balls $\{B_{r_k}(x_k)\}_k$ with centres $x_k\in E$ and radii satisfying $0<r_k\leq\epsilon/2$. Then we have the following lemma:
\begin{lem}\label{lemPM1}\cite{Mattila}: Fix $\epsilon>0$. Let $A$ be a non-empty bounded subset of $\R^n$ and $|A(\epsilon)|$ denote the Lebesgue measure of $A(\epsilon)$, where $A$ is a non-empty bounded subset of $\R^n$. Then,
   \begin{enumerate}
          \item $N(A,2\epsilon)\leq P(A,\epsilon) \leq N(A,\epsilon/2)$.
          \item $\Omega_nP(A,\epsilon)\epsilon^n\leq |A(\epsilon)|\leq \Omega_nN(A,\epsilon)(2\epsilon)^n$,\\
               where $\Omega_n$ denotes the volume of the unit ball in $\R^n$.
          \item For $0\leq s<\infty,\ P(A,\epsilon/2)\epsilon^{s}\leq P_{\epsilon}^{s}(A)$.
   \end{enumerate}
(See pages 78-79 in \cite{Mattila}.)
\end{lem}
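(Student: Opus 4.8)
The plan is to prove the three estimates by the classical device of comparing a \emph{maximum packing} with an \emph{optimal covering}, together with an elementary volume comparison; all three are local facts about balls in $\R^n$ and require no properties of $A$ beyond boundedness. Throughout I would fix a maximum $\epsilon$-packing, that is, a collection of $P(A,\epsilon)$ pairwise disjoint open balls $B_{\epsilon}(x_i)$ with $x_i\in A$, together with an optimal cover, that is, $N(A,\epsilon)$ (or $N(A,\epsilon/2)$) open balls of the stated radius whose union contains $A$. The one geometric fact I would use repeatedly is that two open balls $B_{\epsilon}(x)$ and $B_{\epsilon}(y)$ are disjoint precisely when $|x-y|\geq 2\epsilon$.

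For part (1), I would first prove $N(A,2\epsilon)\leq P(A,\epsilon)$ by observing that a maximum packing is maximal: no further ball $B_{\epsilon}(x)$ with $x\in A$ can be added disjointly, so every $x\in A$ satisfies $|x-x_i|<2\epsilon$ for some packing center $x_i$. Hence the $P(A,\epsilon)$ balls $B_{2\epsilon}(x_i)$ cover $A$, which gives the first inequality. For the bound $P(A,\epsilon)\leq N(A,\epsilon/2)$, I would take an optimal cover of $A$ by $N(A,\epsilon/2)$ balls of radius $\epsilon/2$ and show each such ball contains at most one packing center: if $x_i,x_j$ lay in a common ball of radius $\epsilon/2$ then $|x_i-x_j|<\epsilon<2\epsilon$, contradicting disjointness. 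Assigning to each $x_i$ a cover ball containing it then yields an injection, so $P(A,\epsilon)\leq N(A,\epsilon/2)$.

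For part (2), I would use that each packing ball $B_{\epsilon}(x_i)$ lies inside $A(\epsilon)$, since every point within $\epsilon$ of $x_i\in A$ belongs to the $\epsilon$-neighborhood. Disjointness then gives $|A(\epsilon)|\geq\sum_i|B_{\epsilon}(x_i)|=\Omega_n P(A,\epsilon)\epsilon^n$, the lower bound. For the upper bound I would take an optimal cover of $A$ by $N(A,\epsilon)$ balls $B_{\epsilon}(c_j)$ and verify the inclusion $A(\epsilon)\subseteq\bigcup_j B_{2\epsilon}(c_j)$: if $|x-y|<\epsilon$ for some $y\in A$ and $y\in B_{\epsilon}(c_j)$, then $|x-c_j|<2\epsilon$. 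Summing volumes yields $|A(\epsilon)|\leq\Omega_n N(A,\epsilon)(2\epsilon)^n$.

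Finally, for part (3) I would unwind the definition of the packing premeasure $P_{\epsilon}^s(A)$ as the supremum of $\sum_k d(B_{r_k})^s$ over all $\epsilon$-packings $\{B_{r_k}(x_k)\}$, namely disjoint balls with centers in $A$ and radii $r_k\leq\epsilon/2$. A maximum $(\epsilon/2)$-packing consists of $P(A,\epsilon/2)$ disjoint balls of radius exactly $\epsilon/2$; these form an admissible $\epsilon$-packing, each of diameter $\epsilon$, so they contribute $\sum_k\epsilon^s=P(A,\epsilon/2)\epsilon^s$ to the supremum, whence $P(A,\epsilon/2)\epsilon^s\leq P_{\epsilon}^s(A)$. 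None of the steps presents a genuine obstacle; the only care needed is bookkeeping the radii and the factors of $2$ so that the disjointness and inclusion conditions hold exactly, which is precisely where an off-by-a-factor slip would otherwise creep in.
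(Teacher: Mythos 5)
Your proof is correct, and it is essentially the standard argument: the paper itself gives no proof of this lemma but cites Mattila (pp.~78--79), where exactly this comparison of a maximal $\epsilon$-packing with an optimal covering, the volume count inside $A(\epsilon)$, and the observation that a maximal $(\epsilon/2)$-packing is an admissible $\epsilon$-packing of diameters $\epsilon$ are used. Your factor bookkeeping (disjointness of $B_\epsilon(x)$ and $B_\epsilon(y)$ iff $|x-y|\geq 2\epsilon$, and the inclusion $A(\epsilon)\subseteq\bigcup_j B_{2\epsilon}(c_j)$) is exactly right, including the point that the sup defining $P^s_\epsilon(A)$ is taken over packings with radii $r_k\leq\epsilon/2$, which is what makes part (3) immediate.
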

The $s$-dimensional \textbf{upper} and \textbf{lower Minkowski contents} of $A$ are defined by
     \bea
       \CM^{*s}(A) &=& \underset{\epsilon\rightarrow 0}{\limsup}\ (2\epsilon)^{s-n}|A(\epsilon)|,\\
       \CM_*^{s}(A) &=& \underset{\epsilon\rightarrow 0}{\liminf}\ (2\epsilon)^{s-n}|A(\epsilon)|.
     \eea

\begin{defn}\label{defnSelfsimilar}\cite{Hutchinson}:
A \textbf{similitude} $S$ is a map $S:\R^n\rightarrow\R^n$ such that
       $$S(x)=sR(x)+b,\ x\in\R^n$$
for some isometry $R$, $b\in\R^n$ and $0<s<1$. The number $s$ is called contraction ratio or dilation factor of $S$. Let $\CS=\{S_1,...S_m\},\ m\geq2$ be a collection of finite set of similitudes with dilation factors $s_1,...,s_m$ (so that $S_j=s_jR_j+b_j$ where $R_j$ denotes an isometry and $b_j\in\R^n$). We say that a non-empty compact set $K$ is \textbf{invariant} under $\CS$ if
      \bee
            K=\cup_{j=1}^m\ S_jK.
      \eee
$\CS$ satisfies the \textbf{open set condition} if there is a non-empty open set $O$ such that $\cup_{j=1}^m\ S_j(O)\subset O$ and $S_{j}(O)\cap S_{k}(O)=\emptyset$ for $j\neq k$. We call the invariant set $K$ under $\CS$ to be \textbf{self-similar} if with $\alpha=\dim_H(K)$,
      \bee
           \CH_{\alpha}(S_{j_1}(K)\cap S_{j_2}(K))=0\ \ \text{for}\ j_1\neq j_2.
      \eee
\end{defn}

\section{$L^p-$asymptotic properties of fractal measures for $2\leq p < 2n/\alpha$}
\setcounter{equation}{0}

Let $\mu$ denote a fractal measure supported in an $\alpha$-dimensional set $E\subset\R^n$ and $f\in L^q(d\mu)$ ($1\leq q\leq\infty$). Suppose $2< p\leq 2n/\alpha$. In this section, we obtain the bounds for
  \be
    \frac{1}{L^{k}}\int_{|\xi|\leq L}|\wh{fd\mu}(\xi)|^pd\xi\label{eq-asymp-2p2nalpha}
  \ee
for some positive $k$.\\

We start recalling a few results proved in \cite{Strichartz}:

\begin{thm}\cite{Strichartz}\label{Strichartzsup} Let $\mu$ be a locally uniformly $\alpha$-dimensional measure and $f\in L^2(d\mu)$. Then
\bee
\underset{0<t\leq 1}{\sup} t^{(n-\alpha)/2}\int_{\R^n}e^{-t|\xi|^2/2}|\wh{fd\mu}(\xi)|^2d\xi \leq C \|f\|_{L^2(d\mu)}^2.
\eee
\end{thm}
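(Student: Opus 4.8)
The plan is to turn the Gaussian-weighted $L^2$ average into a nonnegative bilinear form against $\mu\times\mu$ and then reduce everything to a single uniform estimate on a heat-type kernel averaged against $\mu$. First I would expand the square and integrate out $\xi$. Since $\mu$ is finite (a compactly supported locally uniformly $\alpha$-dimensional measure has finite mass), $f\in L^2(d\mu)\subset L^1(d\mu)$, so $fd\mu$ is a finite complex measure; because $e^{-t|\xi|^2/2}$ is integrable and $\iint|f(x)||f(y)|\,d\mu(x)d\mu(y)=\|f\|_{L^1(d\mu)}^2<\infty$, Fubini's theorem applies and the elementary Gaussian integral in $\xi$ gives a constant $c_n$ (depending only on $n$ and the Fourier normalization) with
$$\int_{\R^n}e^{-t|\xi|^2/2}\,|\wh{fd\mu}(\xi)|^2\,d\xi=c_n\,t^{-n/2}\iint f(x)\,\overline{f(y)}\,e^{-|x-y|^2/(2t)}\,d\mu(x)\,d\mu(y).$$
Multiplying by $t^{(n-\alpha)/2}$, the quantity to be bounded becomes $c_n\,t^{-\alpha/2}\,I(t)$, where $I(t)$ denotes the displayed double integral.

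Next I would estimate the bilinear form by symmetrization. Writing $K_t(x,y)=e^{-|x-y|^2/(2t)}$, which is nonnegative and symmetric in $x,y$, the inequality $|f(x)\overline{f(y)}|\le\tfrac12(|f(x)|^2+|f(y)|^2)$ together with the symmetry of $K_t$ collapses the two terms into one, giving
$$|I(t)|\le\iint|f(x)|^2\,K_t(x,y)\,d\mu(x)\,d\mu(y)\le\Big(\sup_{x\in\R^n}\int K_t(x,y)\,d\mu(y)\Big)\|f\|_{L^2(d\mu)}^2.$$
Hence it suffices to prove the uniform kernel bound
$$\sup_{x\in\R^n}\ t^{-\alpha/2}\int K_t(x,y)\,d\mu(y)\le C\qquad(0<t\le1),$$
for then the left-hand side of the theorem is at most $c_n\,C\,\|f\|_{L^2(d\mu)}^2$, which is the assertion.

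The heart of the argument, and the step I expect to be the main obstacle, is this uniform kernel estimate, and it is exactly here that local uniform $\alpha$-dimensionality $\mu(B_r(x))\le a r^\alpha$ ($0<r\le1$) is used. Fix $x$ and set $r=\sqrt t\le1$. I would split the mass of $\mu$ according to distance to $x$: the ball $B_r(x)$, the dyadic annuli $A_j=\{y:2^{j-1}r\le|x-y|<2^jr\}$ for $1\le j\le J$, with $J$ the largest index for which $2^Jr\le1$, and the far region $F=\{y:|x-y|\ge2^Jr\}$. On $B_r(x)$ the kernel is $\le1$ and $\mu(B_r(x))\le ar^\alpha=a\,t^{\alpha/2}$. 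On $A_j$ one has $K_t\le e^{-2^{2j-3}}$ and $\mu(A_j)\le\mu(B_{2^jr}(x))\le a\,(2^jr)^\alpha=a\,2^{j\alpha}t^{\alpha/2}$, so the annular contributions sum to $a\,t^{\alpha/2}\sum_{j\ge1}2^{j\alpha}e^{-2^{2j-3}}$, a convergent series because the Gaussian factor dominates the geometric growth. On $F$, where $2^Jr>\tfrac12$, the kernel is $\le e^{-1/(8t)}$ and the mass is $\le\mu(\R^n)<\infty$, so this piece contributes at most $\mu(\R^n)e^{-1/(8t)}$; multiplied by $t^{-\alpha/2}$ it stays bounded on $(0,1]$ since $e^{-1/(8t)}$ decays faster than any power of $t$. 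Adding the three bounds yields $t^{-\alpha/2}\int K_t(x,y)\,d\mu(y)\le C$ uniformly in $x$ and in $0<t\le1$, completing the proof. The only delicate points are the convergence of the dyadic sum and the suppression of the tail, both governed by the competition between Gaussian decay and the fixed negative power $t^{-\alpha/2}$.
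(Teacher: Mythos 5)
The paper itself offers no proof of this theorem --- it is recalled verbatim from Strichartz --- so the only fair comparison is with Strichartz's original argument, and your proposal is in substance exactly that argument: expand the square, integrate the Gaussian in $\xi$ to get the bilinear form with kernel $e^{-|x-y|^2/(2t)}$ against $\mu\times\mu$, symmetrize via $|f(x)\overline{f(y)}|\leq\frac12(|f(x)|^2+|f(y)|^2)$, and reduce to the uniform kernel estimate $\sup_x t^{-\alpha/2}\int e^{-|x-y|^2/(2t)}d\mu(y)\leq C$, proved by dyadic annuli at scale $r=\sqrt t$ using $\mu(B_\rho(x))\leq a\rho^\alpha$. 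For finite $\mu$ (in particular compactly supported $\mu$, which is the setting in most of this paper) your proof is complete and correct, and the dyadic bookkeeping --- $K_t\leq e^{-2^{2j-3}}$ on $A_j$, convergence of $\sum_j 2^{j\alpha}e^{-2^{2j-3}}$, and $t^{-\alpha/2}e^{-1/(8t)}$ bounded on $(0,1]$ --- all checks out.

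There is, however, a genuine gap against the statement as given: the theorem assumes only that $\mu$ is locally uniformly $\alpha$-dimensional, and such measures need not be finite (counting-type measures on a lattice, or $\mathcal{H}_\alpha$ restricted to an unbounded union of translated $\alpha$-sets, satisfy $\mu(B_r(x))\leq ar^\alpha$ for $0<r\leq1$ with infinite total mass). Your parenthetical ``compactly supported, hence finite mass'' inserts a hypothesis that is not in the statement, and finiteness is used twice: first to justify Fubini through $f\in L^2(d\mu)\subset L^1(d\mu)$, an inclusion that fails for infinite $\mu$; second in the far-region bound $\mu(\R^n)e^{-1/(8t)}$. Both are repairable without new ideas. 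For the tail, do not truncate the decomposition at unit scale: for $R=2^j r\geq1$, cover $B_R(x)$ by $O(R^n)$ balls of radius $1$ to get $\mu(B_R(x))\leq C_n a R^n$, and since $t\leq1$ and $\alpha<n$,
\[
\sum_{j:\,2^jr\geq1} e^{-2^{2j-3}}\,C_n a\,2^{jn}t^{n/2}\ \leq\ C_n a\,t^{\alpha/2}\sum_{j\geq1}2^{jn}e^{-2^{2j-3}}\ \leq\ C\,t^{\alpha/2},
\]
so the kernel bound holds uniformly in $x$ with no mass assumption. For the Fubini step, first prove the inequality for bounded $f$ with bounded support, where the expansion is legitimate, and then pass to general $f\in L^2(d\mu)$ by density: if $f_k\to f$ in $L^2(d\mu)$ then $f_kd\mu\to fd\mu$ in $\mathcal{S}'$ (the growth bound $\mu(B_R)\lesssim R^n$ makes $\mu$ tempered and Schwartz functions lie in $L^2(d\mu)$), so $e^{-t|\xi|^2/4}\wh{f_kd\mu}$ is bounded in $L^2(\R^n)$ and converges distributionally to $e^{-t|\xi|^2/4}\wh{fd\mu}$; weak lower semicontinuity of the norm then transfers the bound, uniformly in $0<t\leq1$.
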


Also, with the use of mean quadratic variation, Lau in \cite{Lau}
investigated the fractal measures by defining a class of complex
valued $\sigma$-finite Borel measures $\mu$ on $\R^n$,
$\mathcal{M}_{\alpha}^{p}$, for $1\leq p<\infty$ with
         \bee
               \|\mu\|_{\mathcal{M}_{\alpha}^{p}} = \underset{0<\delta\leq1}{\sup} \Big(\frac{1}{\delta^{n+\alpha(p-1)}}\int_{\R^n} |\mu(Q_{\delta}(x))|^p\Big)^{1/p}<\infty
          \eee
and
          \bee
                \|\mu\|_{\mathcal{M}_{\alpha}^{\infty}} = \underset{u\in\R^n}{sup} \underset{0<\delta<1}{sup}
                \frac{1}{(2\delta)^{\alpha}}|\mu|(Q_{\delta}(u))<\infty,
           \eee
          where $Q_{\delta}(u)$ denotes the half open cube $\prod_{j=1}^n(x_j-\delta, x_j+\delta]$.
For $1\leq p<\infty,\ 0\leq\alpha<n$, $\mathcal{B}_{\alpha}^{p}$
denotes the set of all locally $p$-th integrable function $f$ in
$\R^n$ such that
          \bee
                \|f\|_{\mathcal{B}_{\alpha}^{p}} = \underset{L\geq1}{sup}\Big(\frac{1}{L^{n-\alpha}} \int_{B_L}|f|^p\Big)^{1/p}<\infty.
           \eee
For $0 \leq \alpha\leq\beta<n$, we have from \cite{LauWang}, $\mathcal{B}_{\beta}^{p} \subseteq \mathcal{B}_{\alpha}^{p} \subseteq \mathcal{B}_{0}^{p} \subseteq L^p(dx/(1+|x|^{n+1}))$. For $\delta>0$, we define the transformation $W_{\delta}$ as
           \bee
                (W_{\delta}f)(x)=\int_{\R^n}f(y)E_{\delta}(y)e^{2\pi ix.y}dy,
           \eee
where $E_{\delta}(y)=\int_{|\xi|\leq \delta}e^{2\pi iy\xi}d\xi = 2\pi (\delta|y|^{-1})^{n/2}J_{n/2}(2\pi\delta|y|)$ and $J_{n/2}$ is the Bessel function of order $n/2$. If $\mu$ is a bounded Borel measure on $\R^n$ and $f=\wh{\mu}$, then for $\delta>0$ and for any ball $B_{\delta}(x)$, $\mu(B_{\delta}(x))=(W_{\delta}f)(x)$ for almost all $x\in\R^n$ with respect to $n$-dimensional Lebesgue measure. Lau studied the asymptotic properties of measures in $\mathcal{M}_{\alpha}^{p}$ in \cite{Lau}:

\begin{thm}\label{LauTheorem}\cite{Lau}
     \begin{enumerate}
            \item Let $1\leq p\leq2$, $1/p+1/p'=1$ and $0\leq\alpha<n$. Suppose $\mu\in\mathcal{M}_{\alpha}^{p}$ then $\hat{\mu}\in\mathcal{B}_{\alpha}^{p'}$ with
                  \bee
                         \underset{L\geq1}{sup}\Big(\frac{1}{L^{n-\alpha}} \int_{B_L}|\hat{\mu}|^{p'}\Big)^{1/p'} = \|\hat{\mu}\|_{\mathcal{B}_{\alpha}^{p'}}\leq C\|\mu\|_{\mathcal{M}_{\alpha}^{p}},
                  \eee
                  for some constant $C$ depending on $\mu$.
            \item Let $\mu$ be a positive $\sigma$-finite Borel measure on $\R^n$ and $f$ be any Borel $\mu$-measurable function on $\R^n$. Let $d\mu_f=fd\mu$. $\mu$ is locally uniformly $\alpha$-dimensional if and only if $\|\mu_f\|_{\CM_{\alpha}^p}\leq C\|f\|_{L^p(d\mu)}$ for all $f\in L^p(d\mu)$, $p>1$ and $C$ is a non-zero constant dependent on $p$.
     \end{enumerate}
\end{thm}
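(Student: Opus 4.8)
The plan is to prove the two assertions by quite different routes, with the substance in the forward bound (1). The central device there is to represent $\widehat{\mu}$ locally as a quotient of two Fourier transforms and then apply the Hausdorff--Young inequality, which is available precisely because $1\leq p\leq 2$. Fix $L\geq 1$ and put $\delta=c/L$ for a small dimensional constant $c\in(0,1/4]$ to be chosen. Writing $g_\delta(x)=\mu(Q_\delta(x))=(\mu\ast\chi_{Q_\delta})(x)$, we have the distributional identity $\widehat{g_\delta}=\widehat{\mu}\,\widehat{\chi_{Q_\delta}}$. Since $\widehat{\chi_{Q_\delta}}(\xi)=\prod_{j=1}^n\frac{\sin(2\pi\delta\xi_j)}{\pi\xi_j}$ and $|2\pi\delta\xi_j|\leq 2\pi c\leq\pi/2$ for $|\xi|\leq L$, the concavity bound $\sin t\geq (2/\pi)t$ on $[0,\pi/2]$ makes every factor positive and at least $4\delta/\pi$, so $|\widehat{\chi_{Q_\delta}}(\xi)|\geq(4\delta/\pi)^n$ throughout $B_L$. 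Hence on $B_L$ we may divide and obtain the pointwise estimate $|\widehat{\mu}(\xi)|\leq(\pi/4)^n\delta^{-n}|\widehat{g_\delta}(\xi)|$.

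Next I would integrate this over $B_L$, enlarge the domain to all of $\R^n$ on the right, invoke Hausdorff--Young $\|\widehat{g_\delta}\|_{p'}\leq C\|g_\delta\|_p$ (valid for $1\leq p\leq 2$), and then use the defining bound for the $\CM_\alpha^p$ norm:
\[
\int_{B_L}|\widehat{\mu}|^{p'}\leq C\delta^{-np'}\Big(\int_{\R^n}|\mu(Q_\delta(x))|^p\,dx\Big)^{p'/p}\leq C\delta^{-np'}\big(\|\mu\|_{\CM_\alpha^p}^p\,\delta^{n+\alpha(p-1)}\big)^{p'/p}.
\]
The resulting power of $\delta$ is $-np'+\frac{p'}{p}\big(n+\alpha(p-1)\big)=\frac{p'(p-1)}{p}(\alpha-n)=\alpha-n$, using the single identity $p'(p-1)=p$; thus the right side equals $C'\|\mu\|_{\CM_\alpha^p}^{p'}\delta^{-(n-\alpha)}=C''\|\mu\|_{\CM_\alpha^p}^{p'}L^{n-\alpha}$. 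Dividing by $L^{n-\alpha}$, taking the $p'$-th root and the supremum over $L\geq 1$ gives $\|\widehat{\mu}\|_{\CB_\alpha^{p'}}\leq C\|\mu\|_{\CM_\alpha^p}$ with a purely dimensional constant (in fact slightly stronger than the stated dependence on $\mu$). The one point needing care is upgrading the distributional identity to the pointwise quotient on $B_L$; this is legitimate because $g_\delta\in L^p$ forces $\widehat{g_\delta}\in L^{p'}$ while $\widehat{\chi_{Q_\delta}}$ is smooth and bounded away from zero on $B_L$.

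For the equivalence (2), necessity is a direct estimate. Assuming $\mu(Q_\delta(x))\leq a\delta^\alpha$ for $0<\delta\leq 1$, Hölder on $Q_\delta(x)$ with exponents $p,p'$ gives $|\mu_f(Q_\delta(x))|^p\leq\mu(Q_\delta(x))^{p-1}\int_{Q_\delta(x)}|f|^p\,d\mu\leq(a\delta^\alpha)^{p-1}\int_{Q_\delta(x)}|f|^p\,d\mu$, and integrating in $x$ with Fubini (using $\int_{\R^n}\chi_{Q_\delta(x)}(y)\,dx=(2\delta)^n$) yields $\int|\mu_f(Q_\delta(x))|^p\,dx\leq a^{p-1}2^n\delta^{n+\alpha(p-1)}\|f\|_{L^p(d\mu)}^p$, which is exactly $\|\mu_f\|_{\CM_\alpha^p}\leq C\|f\|_{L^p(d\mu)}$. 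For sufficiency I would test the hypothesis against $f=\chi_{Q_\delta(x_0)}$, for which $\|f\|_{L^p(d\mu)}^p=\mu(Q_\delta(x_0))$; the elementary geometric fact that $Q_\delta(x)\cap Q_\delta(x_0)\supseteq Q_{\delta/2}(x_0)$ whenever $|x_j-x_{0,j}|\leq\delta/2$ for all $j$ bounds $\|\mu_f\|_{\CM_\alpha^p}^p$ from below by $\delta^{-\alpha(p-1)}\mu(Q_{\delta/2}(x_0))^p$, producing $\mu(Q_{\delta/2}(x_0))^p\leq C^p\delta^{\alpha(p-1)}\mu(Q_\delta(x_0))$.

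Setting $m(\delta)=\mu(Q_\delta(x_0))$, a bound $m(\delta)\leq A\delta^\beta$ (valid for all $\delta\leq 1$) self-improves to $m(\delta)\leq A'\delta^{\beta'}$ with $\beta'=\frac{\alpha(p-1)+\beta}{p}$; since $\beta\mapsto\beta'$ is an affine contraction of slope $1/p<1$ with fixed point $\alpha$, iterating from the trivial exponent $\beta=0$ drives the exponent up to $\alpha$, while in logarithmic coordinates the constant recursion $\log A'=\text{const}+\frac1p\log A$ is a contraction with a global attractor. This gives $\mu(Q_\delta(x_0))\leq a\delta^\alpha$. I expect this sufficiency half to be the main obstacle, for two reasons: one must carry the constants through the iteration and show the limiting $a$ is independent of $x_0$, which reduces to a uniform unit-scale estimate $\sup_{x_0}\mu(Q_1(x_0))<\infty$; and one must assume $\mu$ is locally finite so that each test function $\chi_{Q_\delta(x_0)}$ genuinely lies in $L^p(d\mu)$. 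By contrast, once the quotient representation and Hausdorff--Young are in place, the forward bound (1) collapses to the single exponent cancellation $p'(p-1)=p$.
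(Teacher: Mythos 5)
Note first that the paper itself offers no proof of Theorem \ref{LauTheorem}: it is quoted from \cite{Lau}, and the only trace of the method in the paper is the identity $\mu(B_\delta(x))=(W_\delta\widehat{\mu})(x)$, i.e.\ realizing $\delta$-averages of $\mu$ by multiplying $\widehat{\mu}$ against the kernel $E_\delta$. Your part (1) is precisely the cube analogue of that mechanism, and it is correct: with $g_\delta=\mu\ast\chi_{Q_\delta}$ one has $\widehat{g_\delta}=\widehat{\mu}\,\widehat{\chi_{Q_\delta}}$, the product kernel is bounded below by $(4\delta/\pi)^n$ on $B_L$ once $\delta=c/L$ with $c\leq 1/4$, and Hausdorff--Young together with the single identity $p'(p-1)=p$ produces exactly the power $\delta^{\alpha-n}\sim L^{n-\alpha}$; your exponent arithmetic checks out. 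Working with cubes is in fact cleaner than Lau's balls, since $\widehat{\chi_{Q_\delta}}$ has no zeros to avoid inside $B_L$. Two small points should be added for completeness: at the endpoint $p=1$, $p'=\infty$, the displayed $L^{p'}$ integral must be replaced by the sup form, using $\|\widehat{g_\delta}\|_\infty\leq\|g_\delta\|_1$ and $n+\alpha(p-1)=n$; and you should verify that $\mu\in\mathcal{M}_\alpha^p$ is tempered so that $\widehat{\mu}$ and the product formula make sense (for positive $\mu$, $g_1\in L^p$ gives $\mu(B_R)\lesssim R^{n/p'}$, hence polynomial growth; the identity $\widehat{\mu\ast\chi_{Q_\delta}}=\widehat{\mu}\,\widehat{\chi_{Q_\delta}}$ then follows by mollifying $\chi_{Q_\delta}$), which is slightly more than the passing remark you make.

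In part (2), necessity via H\"older and Fubini is correct (up to the harmless cube-versus-ball translation in the definition of locally uniformly $\alpha$-dimensional). Sufficiency, however, contains the genuine gap, and it is the one you flag but do not close: testing with the \emph{big} cube $f=\chi_{Q_\delta(x_0)}$ only yields the cross-scale inequality $\mu(Q_{\delta/2}(x_0))^p\leq C^p\delta^{\alpha(p-1)}\mu(Q_\delta(x_0))$, and your exponent iteration then requires the uniform seed $\sup_{x_0}\mu(Q_1(x_0))<\infty$ as well as local finiteness; neither follows from $\sigma$-finiteness (which does not even rule out $\mu(Q_1(x_0))=\infty$, in which case the recursion is vacuous), and your scheme offers no way to produce them. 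The fix makes the whole iteration unnecessary: take an exhaustion $E_k\uparrow\R^n$ with $\mu(E_k)<\infty$ and test with the \emph{small} cube, $f=\chi_{Q_{\delta/2}(x_0)\cap E_k}\in L^p(d\mu)$. For every $x$ with $|x_j-x_{0,j}|\leq\delta/2$ one has $Q_\delta(x)\supseteq Q_{\delta/2}(x_0)$, so $\mu_f(Q_\delta(x))=\mu(Q_{\delta/2}(x_0)\cap E_k)=:m_k$ exactly, whence $\delta^n m_k^p\leq C^p\delta^{n+\alpha(p-1)}m_k$, i.e.\ $m_k\leq C^{p/(p-1)}\delta^\alpha$, where $p>1$ and $m_k<\infty$ justify the division. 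Letting $k\to\infty$ gives $\mu(Q_{\delta/2}(x_0))\leq C^{p'}\delta^\alpha$ uniformly in $x_0$, which simultaneously proves the bound and delivers the local finiteness you proposed to assume. Your iteration is salvageable only after this one-shot estimate is available, at which point it is redundant.
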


In this paper, we concentrate on obtaining lower bounds for (\ref{eq-asymp-2p2nalpha}) as $L$ grows to infinity. Strichartz proved in \cite{Strichartz} an analogue of Radon-Nikodym theorem for  positive measure with no infinite atoms:

\begin{thm}\label{thmStrRN}\cite{Strichartz}
Let $\mu$ be a measure with no infinite atoms, and let $\nu$ be $\sigma$-finite and absolutely continuous with respect to $\mu$. Then there exists a unique decomposition $\nu=\nu_1+\nu_2$ such that $d\nu_1=\phi d\mu$ for a non-negative measurable function $\phi$ and $\nu_2$ is null with respect to $\mu$, that is, $\nu_2(A)=0$ whenever $\mu(A)<\infty$.
\end{thm}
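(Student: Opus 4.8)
The plan is to handle existence and uniqueness separately; the entire difficulty is that $\mu$ need not be $\sigma$-finite, so the classical Radon--Nikodym theorem cannot be applied globally and must instead be invoked on a carefully chosen carrier. First I would reformulate the hypothesis measure-theoretically: ``$\mu$ has no infinite atoms'' is exactly semifiniteness, namely that every set $A$ with $\mu(A)=\infty$ contains a subset $B$ with $0<\mu(B)<\infty$; for otherwise $A$ would be an atom of infinite measure. This is the only way the hypothesis enters, and it does so decisively in the uniqueness step.

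For existence I would isolate the largest (in the sense of $\nu$-mass) portion of $X$ on which $\mu$ behaves well. Fix a strictly positive measurable $w$ with $\int w\,d\nu<\infty$, so that $\wt{\nu}$ defined by $d\wt{\nu}=w\,d\nu$ is a finite measure equivalent to $\nu$, and set
$$m=\sup\{\wt{\nu}(G): G \text{ measurable},\ \mu|_{G}\ \text{is}\ \sigma\text{-finite}\}\le\wt{\nu}(X)<\infty.$$
Choosing $G_k$ with $\mu|_{G_k}$ $\sigma$-finite and $\wt{\nu}(G_k)\to m$ and putting $S=\bigcup_k G_k$, the countable union keeps $\mu|_{S}$ $\sigma$-finite, so the supremum is attained at $S$. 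I then define $\nu_1=\nu|_{S}$ and $\nu_2=\nu|_{X\setminus S}$ using the original $\nu$. On $S$ both $\mu|_{S}$ and the $\sigma$-finite measure $\nu_1$ live with $\nu_1\ll\mu|_{S}$, so the ordinary Radon--Nikodym theorem supplies a non-negative measurable $\phi$, extended by zero off $S$, with $d\nu_1=\phi\,d\mu$. To check that $\nu_2$ is $\mu$-null, take $A$ with $\mu(A)<\infty$: then $\mu|_{S\cup A}$ is still $\sigma$-finite, so $\wt{\nu}(S\cup A)\le m=\wt{\nu}(S)$, forcing $\wt{\nu}(A\setminus S)=0$ and hence, since $w>0$, $\nu_2(A)=\nu(A\setminus S)=0$.

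For uniqueness the same weight device shows that valid decompositions of $\nu$ and of $\wt{\nu}$ correspond under multiplication by $w$, so I may assume $\nu$, and therefore $\nu_1,\nu_2$, finite. Given two decompositions $\nu=\nu_1+\nu_2=\nu_1'+\nu_2'$, the difference $\rho=\nu_1-\nu_1'$ is then a finite signed measure with $\mu$-density $g=\phi-\phi'$, and it also equals $\nu_2'-\nu_2$, hence is $\mu$-null. On $\{g>0\}$ every subset $B$ with $\mu(B)<\infty$ satisfies $\int_B g\,d\mu=\rho(B)=0$, so $\mu(B)=0$; were $\mu\{g>0\}>0$, semifiniteness would supply a subset of positive finite measure, a contradiction, whence $\mu\{g>0\}=0$ and symmetrically $\mu\{g<0\}=0$. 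Therefore $g=0$ $\mu$-a.e., so $\nu_1=\nu_1'$ and consequently $\nu_2=\nu_2'$.

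The main obstacle is precisely the absence of $\sigma$-finiteness of $\mu$. It forces the two-step design — build the maximal $\sigma$-finite carrier $S$ with the supremum \emph{attained} (which is why I pass to the finite equivalent measure $\wt{\nu}$) and only then invoke Radon--Nikodym. Moreover it is the semifiniteness hypothesis that, in the uniqueness argument, rules out a stray $\mu$-absolutely-continuous mass disguised as a $\mu$-null measure; without ``no infinite atoms'' the decomposition would still exist but could fail to be unique.
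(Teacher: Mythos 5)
Your proof is correct, but note that the paper itself contains no proof of this statement: Theorem~\ref{thmStrRN} is quoted as background from \cite{Strichartz}, so there is no internal argument to compare against. Your construction is the standard one for this result: replace the $\sigma$-finite $\nu$ by an equivalent finite measure $\widetilde{\nu}$ with $d\widetilde{\nu}=w\,d\nu$, $w>0$; extract a maximal $\sigma$-finite carrier $S$ for $\mu$ by taking the union of a maximizing sequence (the supremum is attained because a countable union of $\sigma$-finite carriers is a $\sigma$-finite carrier, and you need $\widetilde{\nu}$ finite precisely so that maximality makes sense); apply the classical Radon--Nikodym theorem to $\nu|_S\ll\mu|_S$; and deduce $\mu$-nullity of $\nu_2=\nu|_{X\setminus S}$ from $\widetilde{\nu}(S\cup A)\leq\widetilde{\nu}(S)$ when $\mu(A)<\infty$. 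All of these steps check out, as does the reduction of uniqueness to finite $\nu$ via the weight $w$, and your reading of ``no infinite atoms'' as semifiniteness is the intended one here (an infinite atom being a set of infinite measure containing no subset of positive finite measure).

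One inaccuracy in your closing commentary: the claim that without the ``no infinite atoms'' hypothesis the decomposition ``could fail to be unique'' is wrong. After your reduction to finite $\nu$, the density $g=\phi-\phi'$ is $\mu$-integrable, so Chebyshev's inequality gives $\mu(\{g>1/n\})\leq n\int|g|\,d\mu<\infty$ automatically; $\mu$-nullity of $\rho$ then forces $\int_{\{g>1/n\}}g\,d\mu=0$ and hence $\mu(\{g>1/n\})=0$, with the symmetric argument on $\{g<-1/n\}$. Thus uniqueness holds for an arbitrary measure $\mu$, with no appeal to semifiniteness, just as your existence argument never used the hypothesis. Invoking an available hypothesis is harmless, so your proof stands, but the assertion that the hypothesis is what rules out non-uniqueness should be deleted: in this formulation the assumption is inherited from Strichartz's statement rather than needed.
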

\begin{rem}\label{locuniabshausdorff} As observed in \cite{Strichartz}, any locally uniformly $\alpha$-dimensional measure $\mu$ can be written as $d\mu=\phi d\CH_{\alpha}+d\nu$ where $\nu$ is null with respect to $\CH_{\alpha}$ and $\phi$ is a non negative measurable function belonging to $L^1(\mathbb{R}^n)$.
\end{rem}

In \cite{Strichartz}, the author studied the asymptotic properties
of locally uniformly $\alpha$-dimensional measures and proved a
Plancherel type theorem:

\begin{thm}\label{thmStrMain}\cite{Strichartz}
    Let $\mu'=\mu+\nu$ be a locally uniformly $\alpha$-dimensional measure on $\R^n$ where $\mu=\CH_{\alpha}|_{E}$ and $\nu$ is null with respect to $\CH_{\alpha}$. If $E$ is quasi regular, then for fixed $y$ and constant $c$ independent of $y$,
          \bee
             c\int_{E}|f|^2d\CH_{\alpha}\leq\underset{L\rightarrow\infty}{\liminf}\ \frac{1}{L^{n-\alpha}}\int_{B_L(y)}|\wh{fd\mu'}|^2.
          \eee
  \end{thm}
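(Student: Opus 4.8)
The plan is to prove the lower bound by a duality argument that replaces the rough indicator $\chi_{B_L(0)}$ by a smooth, nonnegative, band-limited kernel for which a near-diagonal (Plancherel) expansion can be carried out rigorously. First I would reduce to $y=0$: writing $\xi=y+\eta$ and using $\wh{fd\mu'}(y+\eta)=\wh{(e^{-2\pi i y\cdot x}f)d\mu'}(\eta)$, we get $\int_{B_L(y)}|\wh{fd\mu'}|^2=\int_{B_L(0)}|\wh{gd\mu'}|^2$ with $g=e^{-2\pi i y\cdot x}f$; since $|g|=|f|$ the right-hand side $\int_E|f|^2d\CH_\alpha$ is unchanged, so it suffices to treat $y=0$ and the constant produced will be manifestly independent of $y$.

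Next comes the duality step. Fix a nonnegative even profile $\Psi$ with $\int\Psi=1$ and $\wh\Psi$ supported in the closed unit ball — for instance $\Psi$ proportional to $|\wh{\chi_{B_{1/2}}}|^2$, so that $\Psi\geq0$ and $\Psi*\Psi\geq0$ — and put $\Psi_L(x)=L^n\Psi(Lx)$, so that $\wh{\Psi_L}(\xi)=\wh\Psi(\xi/L)$ is supported in $B_L(0)$. Define the band-limited function $h_L=(f\,d\mu)*\Psi_L$, where $\mu=\CH_\alpha|_E$; then $\wh{h_L}=\wh{\Psi_L}\,\wh{fd\mu}$ is supported in $\ol{B_L(0)}$. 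By Parseval and Cauchy--Schwarz, with $N_L:=\int_E f\,\ol{h_L}\,d\mu'$ and $D_L:=\|h_L\|^2_{L^2(dx)}$,
$$|N_L|^2=\Big|\int_{B_L(0)}\wh{fd\mu'}\,\ol{\wh{h_L}}\Big|^2\leq\Big(\int_{B_L(0)}|\wh{fd\mu'}|^2\Big)\,D_L,$$
so that $\int_{B_L(0)}|\wh{fd\mu'}|^2\geq|N_L|^2/D_L$. Crucially, both $N_L$ and $D_L$ are quadratic forms driven by nonnegative, integrable kernels concentrating at scale $1/L$ near the diagonal:
$$N_L=\int\!\!\int f(x)\,\ol{f(z)}\,\Psi_L(x-z)\,d\mu'(x)\,d\mu(z),\qquad D_L=\int\!\!\int f(z)\,\ol{f(z')}\,L^n(\Psi*\Psi)(L(z-z'))\,d\mu(z)\,d\mu(z').$$
Note that $D_L$ involves only $\mu=\CH_\alpha|_E$, which is exactly what isolates the $\CH_\alpha$-part and keeps the null measure $\nu$ out of the denominator.

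The third step is the asymptotics of $N_L$ and $D_L$. Because the kernels localize at scale $1/L$, I would use local uniform $\alpha$-dimensionality ($\mu'(B_r)\leq Cr^\alpha$) together with the integrable decay of $\Psi$ and $\Psi*\Psi$ to control the off-diagonal, and then evaluate the diagonal limit by a Besicovitch-type differentiation. Splitting $\mu'=\mu+\nu$ in $N_L$, the $\mu\times\mu$ part is $L^{n-\alpha}\int_E|f|^2\,a\,d\CH_\alpha\,(1+o(1))$ for a density weight $a$, while the cross term $\nu\times\mu$ is $o(L^{n-\alpha})$ because $\nu\perp\CH_\alpha$ forces $\lim_{r\to0}\nu(B_r(x))/\CH_\alpha(E\cap B_r(x))=0$ at $\CH_\alpha$-a.e. $x\in E$; similarly $L^{-(n-\alpha)}D_L\to\int_E|f|^2\,b\,d\CH_\alpha$. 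Quasi-regularity of $E$ supplies two-sided density bounds $0<a_0\leq a$ and $b\leq b_1<\infty$ at $\CH_\alpha$-a.e. point, whence
$$\underset{L\rightarrow\infty}{\liminf}\ \frac{1}{L^{n-\alpha}}\int_{B_L(0)}|\wh{fd\mu'}|^2\geq\frac{\big(a_0\int_E|f|^2d\CH_\alpha\big)^2}{b_1\int_E|f|^2d\CH_\alpha}=\frac{a_0^2}{b_1}\int_E|f|^2\,d\CH_\alpha,$$
which is the assertion with $c=a_0^2/b_1$.

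The hard part will be the ball geometry together with the separation of the two components of $\mu'$. The whole difficulty of the ball version sits in the fact that the bare kernel $\wh{\chi_{B_1}}$ decays only like $|x|^{-(n+1)/2}$ and oscillates, so it is neither integrable nor positive, and a direct diagonal expansion of $\int_{B_L(0)}|\wh{fd\mu'}|^2$ cannot discard the off-diagonal; the duality trick is precisely what transfers the analysis to the harmless positive kernels $\Psi$ and $\Psi*\Psi$. The remaining delicate point is the rigorous evaluation of the limits of $N_L$ and $D_L$ and the separation of the absolutely continuous part $\mu=\CH_\alpha|_E$ from the null part $\nu$: this rests on the Radon--Nikodym decomposition $d\mu'=\phi\,d\CH_\alpha+d\nu$ (Theorem~\ref{thmStrRN} and Remark~\ref{locuniabshausdorff}) and on a density argument that localizes all surviving mass at $\CH_\alpha$-density points of the quasi-regular set $E$. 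Controlling the $\nu\times\mu$ cross term in $N_L$ and verifying that the diagonal weights $a,b$ stay between fixed positive constants is where the two hypotheses — quasi-regularity and local uniform $\alpha$-dimensionality — do all the work.
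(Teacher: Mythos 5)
First, a point of comparison: the paper does not prove Theorem~\ref{thmStrMain} at all --- it is quoted from \cite{Strichartz} and used only to derive Corollary~\ref{corMYstr}; the closest argument actually written out in the paper is the proof of its cousin, Theorem~\ref{thm-cohe-str}. So your proposal must be judged against Strichartz's method and that proof. Your framework is sound in several respects and genuinely parallels theirs: the modulation reduction to $y=0$; the Cauchy--Schwarz duality $\int_{B_L(0)}|\wh{fd\mu'}|^2\geq |N_L|^2/D_L$ against the band-limited $h_L=(fd\mu)*\Psi_L$; the upper bound $D_L\leq CL^{n-\alpha}\|f\|^2_{L^2(d\mu)}$, which follows from $\mu'(B_r(x))\leq ar^{\alpha}$ and the decay of $\Psi*\Psi$ via dyadic shells (note this uses local uniform $\alpha$-dimensionality, not quasi-regularity as you wrote); and the treatment of the $\nu\times\mu$ cross term, which does work since $\nu$ and $\mu$ are mutually singular, so Besicovitch differentiation gives $\mu(B_r(x))=o(r^{\alpha})$ at $\nu$-a.e.\ $x$.

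The genuine gap is your third step, the evaluation of $N_L$. A first (repairable) flaw: the limits defining your weights $a$ and $b$ generally do not exist --- for non-integer $\alpha$ the density $\lim_{r\to0}r^{-\alpha}\mu(B_r(x))$ fails to exist $\CH_{\alpha}$-a.e.\ (Marstrand/Preiss), so the $(1+o(1))$ asymptotics must be replaced by a $\liminf$ bound for $N_L$ and a $\limsup$ bound for $D_L$; the ratio argument survives this rewording. The fatal flaw is the claim that integrable decay of $\Psi$ ``controls the off-diagonal'' so that only the diagonal survives. Decay controls pairs with $|x-z|\gg 1/L$; but pairs with $0<|x-z|\lesssim 1/L$ contribute at exactly the same order $L^{n-\alpha}$ as the diagonal, with arbitrary phases $f(x)\ol{f(z)}$, and Besicovitch differentiation says nothing about this bilinear term. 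Indeed, since your $\Psi$ is positive-definite, the $\mu\times\mu$ part of $N_L$ equals $\int\wh{\Psi}(\xi/L)\,|\wh{fd\mu}(\xi)|^2\,d\xi$, so the asserted lower bound $\liminf_L L^{\alpha-n}N_L\geq a_0\int_E|f|^2d\CH_{\alpha}$ \emph{is} the smoothed version of the theorem itself; asserting it by ``Besicovitch-type differentiation'' is close to circular.

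What is missing is the mechanism that both Strichartz and the paper's Theorem~\ref{thm-cohe-str} deploy: partition $E$ into cubes $Q_k$ of side $\epsilon$ and replace $f$ by its cube averages $\frac{1}{\mu(Q_k)}\int_{Q_k}f\,d\mu$, so that within distance $1/L\ll\epsilon$ of the diagonal the phases become $|c_k|^2\geq0$ and the error is the variance term $e_{\epsilon}\to0$ (handled there by uniform continuity of a $C_c$ approximant $g$); then bound each diagonal block from below, which is precisely where quasi-regularity enters, converting cube masses $\mu(Q_k)$ into lower bounds of the correct scale $r^{\alpha}$ so that the $L^{n-\alpha}$ normalization emerges; finally let $\epsilon\to0$ after $L\to\infty$. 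Alternatively one can prove the inequality on a dense class of $f$ and extend to all of $L^2$ using the a priori upper bound of Theorem~\ref{Strichartzsup} in a $3\epsilon$-argument; the decomposition $d\mu'=\phi\,d\CH_{\alpha}+d\nu$ of Theorem~\ref{thmStrRN} you invoke is fine, but it does not substitute for either mechanism. Without one of them, your step three assumes the heart of the result rather than proving it.
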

These results are analogous to the results proved by Agmon and Hormander in \cite{AgmonHormander} when $\alpha$ is an integer. Applying Holder's inequality to the inequality in the Theorem \ref{thmStrMain}, we obtain:

\begin{cor}\label{corMYstr}
Let $f\in L^2(\mu)$ be supported in a quasi $\alpha$-regular set $E$ of non-zero finite $\alpha$-dimensional Hausdorff
measure ($0<\alpha<n$), where $\mu=\mathcal{H}_{\alpha}|_E$ is a locally uniformly $\alpha$-dimensional measure. Then for $p\geq 2$,
        \bee
              \|f\|_{L^2(\mu)}^p\leq c\ \underset{L\rightarrow\infty}{\limsup}\frac{1}{L^{n-\frac{\alpha p}{2}}}\int_{|\xi|\leq L}|\wh{fd\mu}(\xi)|^pd\xi
        \eee
where $c$ is a non zero finite constant depending on $n,\ \alpha$ and $p$.
\end{cor}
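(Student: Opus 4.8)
The plan is to obtain the corollary as a direct consequence of Theorem \ref{thmStrMain} together with a single application of H\"older's inequality, with all the work concentrated in the exponent bookkeeping. Since the hypothesis asserts that $\mu=\CH_{\alpha}|_E$ is itself locally uniformly $\alpha$-dimensional, I would first apply Theorem \ref{thmStrMain} in the degenerate case $\nu=0$, $\mu'=\mu$, and $y=0$. This immediately gives a constant $c>0$ with
\be
c\,\|f\|_{L^2(\mu)}^2 \leq \underset{L\rightarrow\infty}{\liminf}\ \frac{1}{L^{n-\alpha}}\int_{|\xi|\leq L}|\wh{fd\mu}(\xi)|^2 d\xi. \no
\ee
The case $p=2$ is then the statement itself (using $\liminf\leq\limsup$), so from now on I would assume $p>2$.

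Next, for each fixed $L$ I would bound the $L^2$ average on the right by the $L^p$ average. Applying H\"older's inequality on the ball $\{|\xi|\leq L\}$ with conjugate exponents $p/2$ and $p/(p-2)$ to the product $|\wh{fd\mu}|^2\cdot 1$ yields
\be
\int_{|\xi|\leq L}|\wh{fd\mu}(\xi)|^2 d\xi \leq \Big(\int_{|\xi|\leq L}|\wh{fd\mu}(\xi)|^p d\xi\Big)^{2/p}\big(\Omega_n L^n\big)^{1-2/p}, \no
\ee
since $\ds|\{|\xi|\leq L\}|=\Omega_n L^n$. The only computation worth flagging is the exponent arithmetic: dividing by $L^{n-\alpha}$ produces the prefactor $L^{\alpha-2n/p}$, and the identity $\ds L^{\alpha-2n/p}=\big(L^{-(n-\alpha p/2)}\big)^{2/p}$ lets me pull this power inside the $2/p$-th root. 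This gives the clean pointwise-in-$L$ bound
\be
\frac{1}{L^{n-\alpha}}\int_{|\xi|\leq L}|\wh{fd\mu}(\xi)|^2 d\xi \leq \Omega_n^{\,1-2/p}\Big(\frac{1}{L^{n-\alpha p/2}}\int_{|\xi|\leq L}|\wh{fd\mu}(\xi)|^p d\xi\Big)^{2/p}. \no
\ee

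Finally I would pass to the limit. Since the last inequality holds for every $L$, taking $\liminf$ on the left and using $\liminf\leq\limsup$ on the right, together with the fact that $t\mapsto t^{2/p}$ is continuous and nondecreasing for $t\geq 0$ (so the $\limsup$ commutes with the $2/p$-th power), I obtain
\be
\underset{L\rightarrow\infty}{\liminf}\ \frac{1}{L^{n-\alpha}}\int_{|\xi|\leq L}|\wh{fd\mu}|^2 d\xi \leq \Omega_n^{\,1-2/p}\Big(\underset{L\rightarrow\infty}{\limsup}\ \frac{1}{L^{n-\alpha p/2}}\int_{|\xi|\leq L}|\wh{fd\mu}|^p d\xi\Big)^{2/p}. \no
\ee
Combining this with the displayed consequence of Theorem \ref{thmStrMain} and raising both sides to the power $p/2$ gives the asserted inequality with $c=\big(\Omega_n^{\,1-2/p}/c\big)^{p/2}=c^{-p/2}\Omega_n^{\,p/2-1}$, a nonzero finite constant depending only on $n,\alpha,p$. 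There is no genuine analytic obstacle here; the only points requiring care are that the power function $t\mapsto t^{2/p}$ (with $0<2/p\leq 1$) may be moved through the $\limsup$, and that the degenerate case $p=2$ is handled separately so that the H\"older step is applied only when $p>2$.
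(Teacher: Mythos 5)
Your proposal is correct and is essentially the paper's own argument: the paper obtains Corollary \ref{corMYstr} precisely by applying H\"older's inequality to the $L^2$ lower bound of Theorem \ref{thmStrMain} (with $\nu=0$, $\mu'=\mu$), exactly as you do. Your write-up simply makes explicit the exponent bookkeeping, the passage of the power $2/p$ through the $\limsup$, and the trivial case $p=2$, all of which the paper leaves implicit.
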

The above results are proved for locally uniformly $\alpha$-dimensional measure. Suppose $\mu$ is a finite measure supported on a sparse set $E$. If $\mu$ is such that the $\alpha$-dimensional upper Minkowski content of $S$ is non-zero and bounded above by $\mu(S)$, for any $S\subset E$ such that $\mu(S)\neq 0$, then $\mu$ might not be a locally uniformly $\alpha$-dimensional measure. We prove an analogue result with this measure $\mu$ to the above corollary for the range $2\leq p<2n/\alpha$.

\begin{thm}\label{thm-cohe-str} Let $\mu$ be a finite measure supported in a sparse set $E$ such that the $\alpha$-dimensional upper Minkowski content of $S$ is non-zero and bounded above by $\mu(S)$, for any $S\subset E$ such that $\mu(S)\neq 0$ and let $f\in L^2(d\mu)$ be a positive function. Then,
        \bee
            \int_{\R^n} {|f(x)|^{2}}d\mu(x)\leq\ C\ \underset{L\rightarrow\infty}{\liminf}\ \frac{1}{L^{n - \alpha}} \int_{B_L(0)}\
|\wh{fd\mu}(\xi)|^2d\xi
        \eee
and
       \bee
            \int_{\R^n} {|f(x)|^{2}}d\mu(x)\leq\ C'\ \underset{L\rightarrow\infty}{\liminf}\ \frac{1}{L^{n - \alpha}}\int_{\R^n} e^{-\frac{|\xi|^2}{2L^2}} |\wh{fd\mu}(\xi)|^2d\xi,
       \eee
where the constants $C$ and $C'$ are independent of $f$.
\end{thm}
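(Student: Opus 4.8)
The plan is to derive both inequalities from a single lower bound on a smoothed $L^2$-average of the measure $\nu := fd\mu$, obtained through Plancherel's theorem (this is the rigorous form of the identity $\nu(B_\delta(x))=(W_\delta\,\wh{\nu})(x)$ recalled in the text). Write $h := \wh{fd\mu}=\wh{\nu}$. For a Schwartz function $\rho$ with $\int\rho=1$ and $\wh{\rho}\geq0$, set $\rho_L(x)=L^n\rho(Lx)$, so $\wh{\rho_L}(\xi)=\wh{\rho}(\xi/L)$ and $\rho_L$ is an approximate identity at scale $1/L$. Since $h\,\wh{\rho_L}=\wh{\nu*\rho_L}$, Plancherel gives
$$\int_{\R^n}|h(\xi)|^2\,|\wh{\rho}(\xi/L)|^2\,d\xi=\int_{\R^n}|(\nu*\rho_L)(z)|^2\,dz.$$
Taking $\wh{\rho}$ supported in $|\eta|\leq 1/2$ with $0\leq\wh{\rho}\leq1$ bounds the left side by $\int_{|\xi|\leq L}|h|^2$, which yields the ball statement; taking $\wh{\rho}(\eta)=e^{-|\eta|^2/4}$ makes $|\wh{\rho}(\xi/L)|^2=e^{-|\xi|^2/2L^2}$ and turns the identity into the Gaussian statement (no truncation needed there). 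In both cases, using $\nu*\rho_L\geq0$ by positivity of $f$, it suffices to prove
$$\liminf_{L\to\infty}\ \frac{1}{L^{n-\alpha}}\int_{\R^n}|(\nu*\rho_L)(z)|^2\,dz\ \geq\ c\int_{\R^n}|f|^2\,d\mu.$$

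For this core estimate I would decompose $E$ by the level sets of $f$. Fix $\eps>0$, put $S_i=\{x\in E:\ c_i\leq f(x)<c_{i+1}\}$ for a geometric scale $c_{i+1}/c_i\leq 1+\eps$, and discard the $i$ with $\mu(S_i)=0$; these are $\mu$-measurable subsets of $E$, so the hypothesis applies to each. To each $S_i$ associate a region $U_i\subset\R^n$ (its Voronoi cell intersected with $E(1/L)$), so the $U_i$ are disjoint and $U_i\subset S_i(c_0/L)$ for a dimensional constant $c_0$. Restricting the integral to $\bigcup_iU_i$ and applying Cauchy--Schwarz cell-by-cell gives
$$\int_{\R^n}|(\nu*\rho_L)(z)|^2\,dz\ \geq\ \sum_i\frac{\left(\int_{U_i}(\nu*\rho_L)(z)\,dz\right)^2}{|U_i|}.$$
The hypothesis $\CM^{*\alpha}(S_i)\leq A\,\mu(S_i)$ controls the denominators: for all large $L$ (uniformly over the finitely many dominant $S_i$ after truncating the tail using $f\in L^2(d\mu)$) one has $|U_i|\leq|S_i(c_0/L)|\leq A'\,\mu(S_i)\,L^{-(n-\alpha)}$. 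Since $f\geq c_i$ on $S_i$ and $\rho_L$ has unit mass concentrated at scale $1/L$, the numerator is at least $(1-o(1))\,c_i\,\mu(S_i)$ as $L\to\infty$.

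Substituting both bounds, the cell sum is at least $(1-o(1))\,A'^{-1}L^{n-\alpha}\sum_i c_i^2\,\mu(S_i)$. Because $f<c_{i+1}\leq(1+\eps)c_i$ on $S_i$, we get $c_i^2\mu(S_i)\geq(1+\eps)^{-2}\int_{S_i}|f|^2\,d\mu$, hence $\sum_i c_i^2\mu(S_i)\geq(1+\eps)^{-2}\int_E|f|^2\,d\mu$. Dividing by $L^{n-\alpha}$, taking $\liminf_{L\to\infty}$ (legitimate because the partition is fixed and the content estimate holds for all small $1/L$), and finally letting $\eps\to0$ yields the displayed lower bound, proving both inequalities.

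The hard part will be the numerator estimate, namely showing that the mass of $\nu*\rho_L$ created by points $t\in S_i$ is genuinely captured by $U_i$ up to a factor $1-o(1)$: points $t$ within $O(1/L)$ of a Voronoi boundary leak mass into neighbouring cells, and controlling this requires showing that such boundary layers carry asymptotically negligible $\mu$-mass (or inserting a small guard region into the definition of $U_i$). Positivity of $f$ is exactly what makes this affordable, since every discarded contribution is nonnegative: for a lower bound I may freely retain only same-cell interactions and the interior of each cell, and only a definite fraction of the mass is needed, as the theorem asserts merely the existence of a constant $C$. A secondary technical point, the uniformity in $L$ of the content bound across the decomposition, is handled by fixing the (finite, after $L^2$-tail truncation) partition before sending $L\to\infty$.
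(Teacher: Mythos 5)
Your opening reduction is correct and is in fact the same first step as the paper's: the paper also convolves $fd\mu$ with a scaled Schwartz kernel, applies Plancherel, and gets the ball version from a kernel whose Fourier transform is supported in the unit ball and the Gaussian version from $\phi(x)=e^{-|x|^2/2}$. So everything hinges on your core estimate $\liminf_{L\to\infty}L^{\alpha-n}\int_{\R^n}|(fd\mu)*\rho_L|^2\,dz\geq c\int|f|^2d\mu$, and that is where your argument has a genuine gap --- exactly at the point you defer as ``the hard part.''

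Your decomposition is by level sets $S_i=\{c_i\leq f<c_{i+1}\}$, and you need two geometric facts about them: (i) the denominator bound $|U_i|\leq|S_i(c_0/L)|\lesssim\mu(S_i)L^{\alpha-n}$, and (ii) the numerator bound, which requires that for $\mu$-most $t\in S_i$ the other level sets stay at distance $\gg 1/L$ from $t$, so that $U_i$ captures most of the mass of $\rho_L(\cdot-t)$. Neither is available, because upper Minkowski content sees only the \emph{closure} of a set and level sets of a merely measurable $f$ can intertwine at all scales: $f$ may take two different values on two sets each of which meets every ball centred on $E$. In that situation the strict Voronoi cells are empty (every point is equidistant from both level sets), any tie-breaking rule hands essentially all of $E(1/L)$ to one cell, the ``boundary layer'' carries \emph{all} of the mass rather than a negligible amount, and no guard region can help, since no region of volume $\lesssim\mu(S_i)L^{\alpha-n}$ can isolate a dense set from a dense complement; the cell sum then degenerates to $(\int_Ef\,d\mu)^2/|E(1/L)|$, which is not bounded below by $cL^{n-\alpha}\int f^2d\mu$ with a uniform constant. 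Invoking the hypothesis for the $S_i$ is also delicate in itself: read literally (``any $S\subset E$ with $\mu(S)\neq0$'') the hypothesis is unsatisfiable --- it forbids atoms, and then relatively open dense subsets of $\mathrm{supp}\,\mu$ of arbitrarily small positive measure force $\CM^{*\alpha}(\mathrm{supp}\,\mu)=0$ --- while the only form of it the paper's own proof uses is for sets of the form $Q\cap E$ with $Q$ a cube, a class to which your level sets do not belong.

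The paper gets around precisely this obstruction by decomposing \emph{space} rather than the range of $f$: it partitions $E$ by cubes $Q_k$ of side $\epsilon$, replaces $f$ on each $Q_k$ by its $\mu$-average (the variance term $e_\epsilon\to0$ as $\epsilon\to0$ by density of continuous functions in $L^2(d\mu)$ --- this is what absorbs any intertwining of values within a cube), and captures only a \emph{fixed fraction} $2^{-(n+1)}$, not $1-o(1)$, of each cube's mass via a corner device: $\int_{Q_k}f\,d\mu$ is rewritten through the corner boxes $A_{r_1/L}(y)$ attached to each $y\in Q_k\cap E$, and the region $Q_k^L$ is \emph{defined} as the union of these corners, so the capture is automatic whatever the local geometry of $E$, while $|Q_k^L|\leq|(Q_k\cap E)(r/L)|\lesssim\mu(Q_k)L^{\alpha-n}$ by the cube form of the hypothesis and the $Q_k^L$ overlap at most $2^n$ times. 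To repair your outline you would need to import both devices --- local averaging on a spatial partition and a deterministic capture mechanism --- at which point the proof becomes the paper's.
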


\begin{proof}
Since $E$ is compact, without loss of generality we assume that
$E$ is contained in a large cube in the positive quadrant, that
is, there exists smallest positive integer $m$ such that for all
$x=(x_1,...x_n)\in E$, $0<x_j<m$. Let $M=\{x=(x_1,...x_n)\in\R^n:
0\leq x_j\leq m,\ \forall j\}$.

Fix $0<\epsilon<1$. For $k=(k_1,...k_n)$, ($0< k_j\in\Z$), \bee
Q_k = \{x=(x_1,...x_n)\in M:\ (k_j-1)\epsilon<x_j\leq
k_j\epsilon,\ j=1,...n\}. \eee Let $\CQ_0$ be the collection of
all such $Q_k$'s whose intersection with $E$ has non-zero
$\mu$-measure, that is, $\mu(Q_k)\neq 0$. Since $E$ is compact,
there exists finite number of $Q_k$'s in $\CQ_0$. Let
$\tilde{\delta_0}=\min_{Q_k\in\CQ_0}\{\mu(Q_k)\}$. Then $E=\cup
(Q_k\cap E)\cup E'$ where the union is finite and $\mu(E')=0$.

       \bea
          \nonumber \int_E|f(x)|^2d\mu(x) &=& \sum_{Q_k\in\CQ_0}\int_{Q_k}
          |f(x)|^2 d\mu(x)\\
          \nonumber &\leq& 2\sum_{Q_k\in\CQ_0} \int_{Q_k}
          \bigg|f(x)-\frac{1}{\mu(Q_k)}\int_{Q_k}f(y)d\mu(y)\bigg|^2 d\mu(x)\\
          && + 2\sum_{Q_k\in\CQ_0} \frac{1}{\mu(Q_k)} \bigg|\int_{Q_k}
          f(y) d\mu(y)\bigg|^2.\label{thmp2packRed1}
          \eea

Now by the hypothesis on $\mu$, for each $k$, there exists $\delta_k$ such that
\bea
\nonumber |(Q_k\cap
E)(\delta)|\delta^{\alpha-n} &\leq& C_n\mu(Q_k\cap E) + C_n\tilde{\delta_0}\epsilon\\
&\leq& 2C_n\mu(Q_k\cap E)= 2C_n\mu(Q_k),
\label{thmp2pack1}
\eea
for all $\delta \leq\delta_k$. Fix $\delta_0= \min\{\epsilon,\ \tilde{\delta_0},\ \delta_1,\ \delta_2,..\}$. Since there are finite $Q_k$'s, $\delta_0>0$. Let $\phi$
be a positive Schwartz function such that $\wh{\phi}(0)=1$,
support of $\wh{\phi}$ is supported in the unit ball and there
exists $r_1>0$ such that
\be
\int_{A_{r_1}(0)} \phi(x) dx =
\frac{1}{2^{n+1}},\label{thmp2pack2}
\ee
where $A_{r_1}(0)=\{x=(x_1,...x_n): -r_1<x_j\leq 0,\ \forall\ j \}$. Denote $\phi_L(x)=\phi(Lx)$ for all
$L>0$. Let $r=n^{\frac{1}{2}}r_1$. Fix $L$ large such that $r/L\leq \delta_0$. Then we have,
      \bea
      \nonumber \bigg|\int_{Q_k} f(y) d\mu(y)\bigg|^2 &=& 2^{2(n+1)} \bigg|\int_{Q_k} \int_{A_{r_1}(0)} \phi(x) dx f(y) d\mu(y)\bigg|^2\\
\nonumber      &=& 2^{2(n+1)}L^{2n}\bigg| \int_{Q_k} \int_{A_{r_1/L}(y)} \phi_L(x-y) dx f(y) d\mu(y)\bigg|^2\\
      &=& 2^{2(n+1)}L^{2n} \bigg|\int_{Q_k^L} \int_{Q_k} \phi_L(x-y)f(y) d\mu(y)dx\bigg|^2,\label{eqthmp2pack0}
      \eea
where $$Q_k^{L}=\{x=(x_1,..x_n)\in\R^n: \exists\ y=(y_1,...y_n)\in
E\ \text{such}\ \text{that}\ y_j-r_1/L<x_j\leq y_j,\ \forall\
j\}.$$ Then $|Q_k^{L}|\leq |(Q_k\cap E)(r/L)|$, where $(Q_k\cap
E)(r/L)$ denotes the $r/L$-distance set of $Q_k\cap E$ (since
$r=\surd{n}r_1$). Also since $\phi$ and $f$ are positive,
$$\int_{Q_k^L} \int_{Q_k} \phi_L(x-y)f(y) d\mu(y)dx\leq
\int_{Q_k^L}\phi_L*fd\mu(x)dx.$$ Thus from (\ref{eqthmp2pack0}),
     \Bea
     \frac{1}{2^{2(n+1)}}\bigg|\int_{Q_k} f(y) d\mu(y)\bigg|^2 &\leq&
      L^{2n}\ \bigg|\int_{Q_k^L}\phi_L*fd\mu(x)dx\bigg|^2.\\
      &\leq& L^{2n} |Q_k^L|\int_{Q_k^L} |\phi_L*fd\mu(x)|^2dx\\
      &\leq& L^{2n}|(Q_k\cap E)(r/L)|\ \int_{Q_k^L} |\phi_L*fd\mu(x)|^2dx\\
      &\leq& 2C_nr^{n-\alpha}L^{n+\alpha}\mu(Q_k)\int_{Q_k^L} |\phi_L*fd\mu(x)|^2dx\ \text{by}\
      (\ref{thmp2pack1}).
      \Eea
Thus there exists a constant $C_1$ independent of $\epsilon$, $L$ and $f$ such that
      \bee
       \frac{1}{\mu(Q_k)}\bigg|\int_{Q_k} f(y) d\mu(y)\bigg|^2 \leq C_1 L^{n+\alpha} \int_{Q_k^L}
       |\phi_L*fd\mu(x)|^2 dx.
      \eee
Hence, from (\ref{thmp2packRed1})
    \Bea
     \int_E|f(x)|^2d\mu(x)&\leq& 2\sum_k\int_{Q_k} \bigg|f(x)-\frac{1}{\mu(Q_k)}\int_{Q_k}f(y)d\mu(y)\bigg|^2 d\mu(x)\\
     && + 2C_1L^{n+\alpha} \sum_{Q_k\in\CQ} \int_{Q_k^L}
       |\phi_L*fd\mu(x)|^2 dx.
     \Eea
By the choice of $r/L<\delta_0<\epsilon$, any $x\in Q_k^L$ belongs to at most $2^n$ number of other $Q_k^L$'s in $\CQ_0$.
Hence there exists a constant $C=2C_12^n$ independent of $f,\epsilon$ and
$L$ such that for all $r/L\leq \delta_0$
 \be
 \int_E|f(x)|^2d\mu(x) \leq 2e_{\epsilon} + CL^{n+\alpha}\int_{E(r/L)}
 |\phi_L*fd\mu(x)|^2dx,\label{eqfin1}
 \ee
 where $$e_{\epsilon}=\sum_{k}\int_{Q_k} |f(x)-\frac{1}{\mu(Q_k)}\int_{Q_k} f(y)d\mu(y)|^2d\mu(x).$$
For given $\epsilon$, let $g\in C_c^{\infty}(d\mu)$ be such that $\|f-g\|^2_{L^2(d\mu)}<\epsilon$. Then,
\Bea
 e_{\epsilon}&=&\sum_{k}\int_{Q_k}\bigg|f(x)-\frac{1}{\mu(Q_k)}\int_{Q_k} f(y)d\mu(y)\bigg|^2d\mu(x)\\
&\leq& 2\sum_{k}\int_{Q_k}\bigg|f(x)-g(x)-\frac{1}{\mu(Q_k)}\int_{Q_k} f(y)-g(y)d\mu(y)\bigg|^2d\mu(x)\\
&& +2\sum_{k}\int_{Q_k}\bigg|g(x)-\frac{1}{\mu(Q_k)}\int_{Q_k} g(y)d\mu(y)\bigg|^2d\mu(x)\\
&\leq& 4\sum_{k}\int_{Q_k}|f(x)-g(x)|^2+\bigg|\frac{1}{\mu(Q_k)}\int_{Q_k} f(y)-g(y)d\mu(y)\bigg|^2d\mu(x)\\
&&+2\sum_{k}\int_{Q_k}\bigg|g(x)-\frac{1}{\mu(Q_k)}\int_{Q_k} g(y)d\mu(y)\bigg|^2d\mu(x)\\
&\leq&8\sum_{k}\int_{Q_k}|f(x)-g(x)|^2d\mu(x)\\
&&+2\sum_{k}\int_{Q_k}\bigg|g(x)-\frac{1}{\mu(Q_k)}\int_{Q_k} g(y)d\mu(y)\bigg|^2d\mu(x).
\Eea
Since $E=\cup_k(Q_k\cap E)\cup E'$,
\bea
\nonumber e_{\epsilon}&\leq& 8\|f-g\|^2_{L^2(d\mu)} +2\sum_{k}\int_{Q_k}\bigg|g(x)-\frac{1}{\mu(Q_k)}\int_{Q_k} g(y)d\mu(y)\bigg|^2d\mu(x)\\
&\leq& \epsilon +2\sum_{k}\int_{Q_k}\bigg|g(x)-\frac{1}{\mu(Q_k)}\int_{Q_k} g(y)d\mu(y)\bigg|^2d\mu(x).\label{Maxop}
\eea
Since $g$ is compactly supported continuous function, $g$ is uniformly continuous and $$|g(x)-\frac{1}{\mu(Q_k)}\int_{Q_k} g(y)d\mu(y)|\rightarrow
0$$ uniformly in $x$ and $Q_k$ as $\mu(Q_k)\rightarrow 0$. As $\epsilon\rightarrow0$, we have $\mu(Q_k)\rightarrow 0$. Hence
\Bea
&&\sum_{k}\int_{Q_k}\bigg|g(x)-\frac{1}{\mu(Q_k)}\int_{Q_k} g(y)d\mu(y)\bigg|^2d\mu(x)\\
&\leq& \sum_{k}\mu(Q_k)\underset{Q_k\in\CQ_0}{\sup}\underset{x\in Q_k}{\sup}\bigg|g(x)-\frac{1}{\mu(Q_k)}\int_{Q_k} g(y)d\mu(y)\bigg|^2\\
&=& \mu(E)\underset{Q_k\in\CQ_0}{\sup}\underset{x\in Q_k}{\sup}\bigg|g(x)-\frac{1}{\mu(Q_k)}\int_{Q_k} g(y)d\mu(y)\bigg|^2,
\Eea
which goes to zero as $\epsilon$ goes to zero. Therefore,  from (\ref{Maxop}), $e_{\epsilon}$ goes to zero as $\epsilon$ goes to zero. Letting $\epsilon$ to $0$, we have $r_1/L\leq\delta_0\rightarrow 0$. Thus (\ref{eqfin1}) becomes
        \be
            \int_E{|f(x)|^2}d\mu(x) \leq\ C\ \underset{L\rightarrow\infty}{\liminf}\ L^{n+\alpha}\int_{E(r/L)}\ {|\phi_L*fd\mu(x)|^{2}}dx,\label{eqredpositive}
\ee
       \Bea
                \int_E|f(x)|^2d\mu(x) &\leq& \ C\ \underset{L\rightarrow\infty}{\liminf}\ L^{n+\alpha}\int_{E(r/L)}\ {|\phi_L*fd\mu(x)|^{2}}dx\\
&\leq&\ C\ \underset{L\rightarrow\infty}{\liminf}\ L^{n+\alpha}\int_{\R^n}\ {|\wh{\phi_L*fd\mu}(\xi)|^{2}}d\xi\\
                        &\leq&\ C\ \underset{L\rightarrow\infty}{\liminf}\ L^{-n+\alpha}\int_{\R^n}\ {|\wh{\phi}(\xi/L)|^{2}|\wh{fd\mu}(\xi)|^{2}}d\xi.
        \Eea
Since the support of $\wh{\phi}$ is in the unit ball, we have
        \bee
              \int_E {|f(x)|^{2}}d\mu(x)\leq\ C\|\phi\|_{L^1(\R^n)}^{2} \underset{L\rightarrow\infty}{\liminf}\ \frac{1}{L^{n - \alpha}} \int_{B_L(0)}\ |\wh{fd\mu}(\xi)|^2d\xi.
         \eee

The assumption on the support of $\widehat{\phi}$ to be in the unit ball is used only in the last step. Consider $\phi(x)=e^{-\frac{|x|^2}{2}}$. Proceeding in a similar way, we have
         $$\int_E {|f(x)|^{2}}d\mu(x)\leq\ C\ \underset{L\rightarrow\infty}{\liminf}\ \frac{1}{L^{n - \alpha}}\int_{\R^n} e^{-\frac{|\xi|^2}{2L^2}} |\wh{fd\mu}(\xi)|^2d\xi$$
Hence the proof.
\end{proof}
By an application of Holder's inequality, we obtain the following Corollary.
\begin{cor}\label{cor-cohe-str} Let $f\in L^2(d\mu)$ be a positive function where $\mu$ is a finite measure supported in a set $E$ such that the $\alpha$-dimensional upper Minkowski content of all non-zero $\mu$-measure subsets $S$ of $E$ is non-zero and bounded above by $\mu(S)$. Then for $2\leq p<2n/\alpha$,
        \bee
            \int_{\R^n} {|f(x)|^{2}}d\mu(x)\leq\ C\ \underset{L\rightarrow\infty}{\liminf}\ \Big(\frac{1}{L^{n - \alpha p/2}} \int_{B_L(0)}\
|\wh{fd\mu}(\xi)|^pd\xi\Big)^{2/p}
        \eee
and
       \bee
            \int_{\R^n} {|f(x)|^{2}}d\mu(x)\leq\ C'\ \underset{L\rightarrow\infty}{\liminf}\ \Big(\frac{1}{L^{n - \alpha p/2}}\int_{\R^n} e^{-\frac{|\xi|^2}{2L^2}} |\wh{fd\mu}(\xi)|^pd\xi\Big)^{2/p},
       \eee
where the constants $C$ and $C'$ are independent of $f$.
\end{cor}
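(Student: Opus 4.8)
The plan is to deduce the corollary directly from \thmref{thm-cohe-str} by feeding the two $L^2$ estimates proved there into Hölder's inequality. Since \thmref{thm-cohe-str} already carries out all the analytic work (the cube decomposition, the approximation of $f$ by $C_c^\infty(d\mu)$ functions, and the Plancherel/support step), nothing new is required beyond a single Hölder estimate together with a bookkeeping check on the powers of $L$; the condition $p\geq 2$ is exactly what makes this Hölder step available.

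First I would treat the ball version. Fixing $L$ and applying Hölder's inequality on $B_L(0)$ with the conjugate pair of exponents $p/2$ and $p/(p-2)$ to the factorization $|\wh{fd\mu}|^2 = |\wh{fd\mu}|^2\cdot 1$ gives
\[
\int_{B_L(0)} |\wh{fd\mu}(\xi)|^2 d\xi \leq \Big(\int_{B_L(0)} |\wh{fd\mu}(\xi)|^p d\xi\Big)^{2/p}\, |B_L(0)|^{1-2/p}.
\]
Because $|B_L(0)| = \Omega_n L^n$, the last factor equals $\Omega_n^{1-2/p} L^{\,n-2n/p}$. Dividing through by $L^{n-\alpha}$, the power of $L$ produced is $L^{\,n-2n/p-(n-\alpha)} = L^{\,\alpha-2n/p}$, which is precisely $\big(L^{-(n-\alpha p/2)}\big)^{2/p}$, so that
\[
\frac{1}{L^{n-\alpha}} \int_{B_L(0)} |\wh{fd\mu}(\xi)|^2 d\xi \leq \Omega_n^{1-2/p} \Big(\frac{1}{L^{n-\alpha p/2}}\int_{B_L(0)} |\wh{fd\mu}(\xi)|^p d\xi\Big)^{2/p}.
\]
Taking $\liminf$ as $L\to\infty$ on both sides (the constant prefactor pulls out, and monotonicity of $\liminf$ under a pointwise inequality preserves the direction) and chaining with the first inequality of \thmref{thm-cohe-str} yields the first claim with constant $C\,\Omega_n^{1-2/p}$.

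For the Gaussian version I would run the same Hölder step, but now distribute the weight $e^{-|\xi|^2/(2L^2)}$ across the two factors. Choosing $g = e^{-|\xi|^2/(pL^2)}|\wh{fd\mu}|^2$ and $h = e^{-|\xi|^2/(2L^2)(1-2/p)}$, one checks that $gh$ reconstitutes the integrand, that $g^{p/2} = e^{-|\xi|^2/(2L^2)}|\wh{fd\mu}|^p$, and that $h^{p/(p-2)} = e^{-|\xi|^2/(2L^2)}$; consequently $\|h\|_{p/(p-2)} = \big(\int_{\R^n} e^{-|\xi|^2/(2L^2)}\,d\xi\big)^{(p-2)/p} = \big((2\pi)^{n/2}L^n\big)^{(p-2)/p}$, whose $L$-exponent is again $n-2n/p$. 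After dividing by $L^{n-\alpha}$ the exponents match exactly as before, and the second inequality of \thmref{thm-cohe-str} closes the argument.

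There is no genuine obstacle here: the corollary is a Hölder-plus-bookkeeping deduction. The only point demanding care — and the real crux — is the exponent arithmetic, namely confirming $n-2n/p-(n-\alpha) = \alpha-2n/p = \tfrac{2}{p}\big(\tfrac{\alpha p}{2}-n\big)$ so that the normalization $L^{n-\alpha p/2}$ emerges correctly, and noting $1-2/p\geq 0$ for $p\geq 2$ so the auxiliary factors sit on the correct side. The hypothesis $p<2n/\alpha$ is not used in the Hölder step itself; it is retained only so that $n-\alpha p/2>0$ and the normalized averages genuinely decay.
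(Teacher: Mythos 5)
Your proposal is correct and takes essentially the same route as the paper: the paper deduces this corollary from Theorem~\ref{thm-cohe-str} precisely ``by an application of H\"older's inequality,'' which is exactly your argument, and your exponent bookkeeping $n-2n/p-(n-\alpha)=\alpha-2n/p=\tfrac{2}{p}\bigl(\tfrac{\alpha p}{2}-n\bigr)$ is what makes the normalization $L^{n-\alpha p/2}$ come out. In fact you supply more detail than the paper does, including the correct splitting of the Gaussian weight across the two H\"older factors.
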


Now, to prove that Theorem \ref{thm-cohe-str} is optimal we prove the following lemma by closely following the
arguments in \cite{AgmonHormander} (also see page 174 of
\cite{Hormander}).

\begin{lem}\label{lemMYhorm1}
Let $u$ be a tempered distribution supported in a compact set $E$. Let $\chi$ be a radial $C_c^{\infty}$ function supported in the unit ball and
$\int_{\mathbb{R}^{n}}\chi(x) dx = 1$. Denote $\chi_{\epsilon}(x)
= \epsilon^{-n}\chi(x/\epsilon)$ and
$u_{\epsilon}=u\ast\chi_{\epsilon}$. Let $\sigma_u(r)=\int_{S^{n-1}} |\widehat{u}(r\omega)|^2d\omega$. Then,
  $$\|u_{\epsilon}\|^{2}\leq C\ \epsilon^{(\alpha-n)(1-\frac{1}{q})} \bigg(\sup_{\epsilon L>1}\frac{1}{L^k} \int_0^{L}(\sigma_u(r))^{\frac{p}{2}}r^{n-1} dr\bigg)^{\frac{2}{p}},$$
for some non-zero finite constants $C$ independent of $\epsilon$ and $k=n-\frac{\alpha p}{2}-(n-\alpha)\frac{p}{2q}$ with $1<q\leq\infty$ and $2\leq p<2n/\alpha$.
\end{lem}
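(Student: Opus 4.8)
The plan is to reduce everything to a one-dimensional radial integral via Plancherel, and then to play the rapid decay of $\wh{\chi}$ against the growth control that is hidden inside the supremum on the right-hand side. First I would note that $u_\epsilon = u\ast\chi_\epsilon$ is a compactly supported smooth function, so $\|u_\epsilon\|^2 = \|u_\epsilon\|_{L^2(\R^n)}^2$ is finite, and by Plancherel together with $\wh{u_\epsilon}(\xi) = \wh{u}(\xi)\,\wh{\chi}(\epsilon\xi)$,
\[
\|u_\epsilon\|^2 = \int_{\R^n} |\wh{u}(\xi)|^2\,|\wh{\chi}(\epsilon\xi)|^2\,d\xi .
\]
Since $\chi$ is radial, so is $\wh{\chi}$; passing to polar coordinates and writing $\wh{\chi}(\epsilon r)$ for the common radial profile gives
\[
\|u_\epsilon\|^2 = \int_0^\infty |\wh{\chi}(\epsilon r)|^2\,\sigma_u(r)\,r^{n-1}\,dr .
\]
Abbreviating by $A$ the supremum appearing in the statement and setting $F(L)=\int_0^L (\sigma_u(r))^{p/2} r^{n-1}\,dr$, the hypothesis reads $F(L)\le A\,L^k$ for all $L>1/\epsilon$, and by monotonicity of $F$ also $F(1/\epsilon)\le A\,\epsilon^{-k}$.

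Next I would split the radial integral at $r=1/\epsilon$. On the low range $[0,1/\epsilon]$ I bound $|\wh{\chi}(\epsilon r)|^2\le \|\wh{\chi}\|_\infty^2$ and apply H\"older's inequality with exponents $p/2$ and $(p/2)'=p/(p-2)$, factoring $\sigma_u\,r^{n-1} = \big(\sigma_u\,r^{2(n-1)/p}\big)\cdot r^{(n-1)(1-2/p)}$. The pleasant point is that the conjugate factor reassembles to $\int_0^{1/\epsilon} r^{n-1}\,dr \asymp \epsilon^{-n}$, so this piece is controlled by $C\,F(1/\epsilon)^{2/p}\,\epsilon^{-n(p-2)/p}\le C\,A^{2/p}\,\epsilon^{-2k/p-n(p-2)/p}$. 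Everything then rests on the algebraic identity $2k+n(p-2)=p(n-\alpha)(1-1/q)$, which is precisely how $k$ was defined; granting it, the exponent of $\epsilon$ collapses to $(\alpha-n)(1-1/q)$, matching the claim exactly.

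For the high range $(1/\epsilon,\infty)$ I would decompose dyadically into annuli $r\in[2^j/\epsilon,\,2^{j+1}/\epsilon]$, $j\ge0$. On each annulus $\epsilon r\asymp 2^j$, so the Schwartz decay of $\wh{\chi}$ gives $|\wh{\chi}(\epsilon r)|^2\le C_N\,2^{-2Nj}$ for every $N$. Running the same H\"older estimate on the annulus and using $F(2^{j+1}/\epsilon)\le A\,(2^{j+1}/\epsilon)^k$ (legitimate because $2^{j+1}/\epsilon>1/\epsilon$) together with $\int_{2^j/\epsilon}^{2^{j+1}/\epsilon} r^{n-1}\,dr\le C\,2^{jn}\epsilon^{-n}$ bounds the $j$-th annulus by $C\,A^{2/p}\,\epsilon^{(\alpha-n)(1-1/q)}\,2^{-j(2N-(n-\alpha)(1-1/q))}$, where I again invoke the same exponent identity. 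Choosing $N$ large enough that $2N>(n-\alpha)(1-1/q)$ makes the geometric series in $j$ converge to a sum independent of $\epsilon$. Adding the low- and high-frequency contributions yields the asserted inequality with a constant $C$ depending only on $\chi,n,\alpha,p,q$.

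I expect the only genuine difficulty to be bookkeeping rather than anything conceptual: one must verify the identity $2k+n(p-2)=p(n-\alpha)(1-1/q)$, check that the H\"older factorization is valid throughout $2\le p<2n/\alpha$ (the endpoint $p=2$ being the direct case in which $F(L)=\int_0^L\sigma_u\,r^{n-1}\,dr$ and no H\"older is needed), and confirm that $N$ can always be taken large since $\wh{\chi}$ is Schwartz. The hypothesis $q>1$ is exactly what guarantees $(n-\alpha)(1-1/q)>0$, and hence the convergence of the dyadic sum over the high frequencies.
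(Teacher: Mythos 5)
Your proof is correct and takes essentially the same approach as the paper's: Plancherel plus polar coordinates, a split at $r=1/\epsilon$ with dyadic annuli above, H\"older with exponents $p/2$ and $p/(p-2)$ on each piece, the exponent identity $2k+n(p-2)=p(n-\alpha)(1-1/q)$, and the Schwartz decay of $\wh{\chi}$ to sum the dyadic series (the paper keeps $\wh{\chi}$ inside the conjugate H\"older factor and packages the decay into coefficients $a_j$, which is only a cosmetic difference from your $C_N2^{-2Nj}$ bound). One small inaccuracy in your closing remark: the convergence of the dyadic sum is secured by taking $N$ large and does not hinge on $q>1$; that hypothesis merely makes the exponent $(n-\alpha)(1-1/q)$ positive, so it does not affect the argument.
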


\begin{proof}
By the Plancherel theorem,
  \Bea
    \|u_{\epsilon}\|^{2}
    &=& \int_{\mathbb{R}^{n}} |\widehat{u}(\xi)|^{2}|\wh{\chi}(\epsilon\xi)|^{2} d\xi \\
    &=&\int_0^{\epsilon^{-1}} (\sigma_u(r)) |\wh{\chi}(\epsilon r)|^2 r^{n-1}dr\ +\sum_{j=1}^\infty \int_{2^{j-1}\epsilon^{-1}}^{2^{j}\epsilon^{-1}} (\sigma_u(r)) |\wh{\chi}(\epsilon r)|^2 r^{n-1}dr.\\
    &\leq& \bigg(\int_{0}^{\epsilon^{-1}} (\sigma_u(r))^{p/2}r^{n-1}dr\bigg)^{\frac{2}{p}} \bigg(\epsilon^{-n}\int_{0}^{1} |\hat{\chi}(r)|^{\frac{2}{1-\frac{2}{p}}} r^{n-1} dr \bigg)^{1-\frac{2}{p}}\\
    && + \sum_{j=1}^{\infty} \bigg(\int_{\frac{2^{j-1}}{\epsilon}}^{\frac{2^j}{\epsilon}} \sigma_u(r)^{p/2}r^{n-1}dr\bigg)^{\frac{2}{p}} \bigg(\epsilon^{-n}\int_{2^{j-1}}^{2^j} |\hat{\chi}(r)|^{\frac{2}{1-\frac{2}{p}}} r^{n-1} dr \bigg)^{1-\frac{2}{p}}\\
    &\leq& \epsilon^{(\alpha-n)(1-\frac{1}{q})}\bigg( \sum_{j=0}^{\infty}a_j \bigg(\underset{\epsilon L>1}{\sup} \frac{1}{L^k}\int_0^{L}\sigma_u(r)^{\frac{p}{2}} r^{n-1}dr\bigg)^{\frac{2}{p}}\bigg),
    \Eea
where, for all $j>0$
    \bee
         a_j = \bigg(2^{\frac{2kj}{p-2}} \int_{2^{j-1}}^{2^j}|\wh{\chi}(r)|^{\frac{2p}{p-2}} r^{n-1}dr\bigg)^{1-\frac{2}{p}}
    \eee
and $a_0 = \bigg(\int_{0}^{1}|\wh{\chi}(r)|^{\frac{2p}{p-2}} r^{n-1}dr\bigg)^{1-\frac{2}{p}}.$ We have $\sum_ja_{j}$ is finite. Thus
     \bee
           \|u_{\epsilon}\|^{2} \leq \epsilon^{(\alpha-n)(1-\frac{1}{q})}C \bigg(\underset{\epsilon L>1}{\sup}\frac{1}{L^k} \int_L^{2L}\sigma_u(r)^{\frac{p}{2}} r^{n-1}dr\bigg)^{\frac{2}{p}}
     \eee
  since $k=n-\frac{\alpha p}{2}-(n-\alpha)\frac{p}{2q}$.
\end{proof}

\begin{thm}\label{LBTheorem}
Fix $0<\alpha<n$. Let $\mu$ be a finite measure supported in a compact set $M$ such that the $\alpha$-dimensional upper Minkowski content of $S$ is non-zero and bounded above by $\mu(S)$ for any $S\subset M$ with $\mu(S)\neq 0$. Let $u$ be a tempered distribution such
that support of $u$ is contained in $M$ and
$\sigma_u(r)=\int_{S^{n-1}}|\wh{u}(r\omega)|^2 d\omega$. Let
$2\leq p<\frac{2n}{\alpha}$. Then
         \bee
              \|u\|_1^p\leq C\ \underset{L\rightarrow\infty}{\limsup}\ \frac{1}{L^{n-\frac{\alpha
p}{2}}}\int_0^{L}(\sigma_u(r))^{\frac{p}{2}} r^{n-1}dr \leq C'\ \underset{L\rightarrow\infty}{\limsup}\ \frac{1}{L^{n-\frac{\alpha
p}{2}}}\int_{|\xi|<L}|\widehat{u}(\xi)|^pd\xi,
         \eee
where $\|u\|_1=sup\{<u,\psi>:\psi\in C_c^{\infty}(\R^n),\ \|\psi\|_{L^{\infty}(\R^n)}\leq1\}$, $C$ and $C'$ are non zero finite constants depending only on $n,\ \alpha$ and $p$.\\

In general, for $2\leq p<\frac{2n}{\alpha+\frac{n-\alpha}{q}}$, where $1<q\leq\infty$
         \Bea
              \|u\|_r^p &\leq& C\ \underset{L\rightarrow\infty}{\limsup}\ \frac{1}{L^{n-\frac{\alpha p}{2}-(n-\alpha)\frac{p}{2q}}} \int_0^{L}(\sigma_u(r))^{p}{2} r^{n-1}dr\\
              &\leq& C'\ \underset{L\rightarrow\infty}{\limsup}\ \frac{1}{L^{n-\frac{\alpha
p}{2}-(n-\alpha)\frac{p}{2q}}}\int_{|\xi|<L}|\widehat{u}(\xi)|^pd\xi,
         \Eea
where $\frac{1}{r}+\frac{1}{2q}=1$, $\|u\|_r=sup\{<u,\psi>: \|\psi\|_{L^{2q}(\R^n)}\leq1\}$, $C$ and $C'$ are non zero finite constants depending on $n,\ \alpha$, $p$ and $q$.
\end{thm}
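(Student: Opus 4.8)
The plan is to deduce both inequalities in each display from Lemma~\ref{lemMYhorm1} by a duality argument, the whole point being that the power of $\epsilon$ produced by the Lemma is cancelled exactly by the Lebesgue measure of a thin neighbourhood of $M$. First I would record the one consequence of the geometric hypothesis that the argument needs: taking $S=M$ (legitimate, since $\mu$ is a nonzero finite measure supported on $M$, so $\mu(M)\neq0$), the assumption gives $\CM^{*\alpha}(M)\leq C\mu(M)<\infty$, and hence, by the definition of the upper Minkowski content, $|M(\epsilon)|\leq C\,\epsilon^{n-\alpha}$ for all sufficiently small $\epsilon>0$. This is the only place where the hypothesis on $M$ enters this proof.

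Next, for the first display (the case $q=\infty$, so $k=n-\tfrac{\alpha p}{2}$), fix $\psi\in C_c^\infty(\R^n)$ with $\|\psi\|_{L^\infty}\leq1$ and set $u_\epsilon=u\ast\chi_\epsilon$ with $\chi$ the radial mollifier of Lemma~\ref{lemMYhorm1}. Since $u$ is supported in $M$ and $\chi_\epsilon$ in the ball of radius $\epsilon$, $u_\epsilon$ is a smooth function supported in $M(\epsilon)$, so by Cauchy--Schwarz and the measure bound above,
\[
|\langle u_\epsilon,\psi\rangle|=\Big|\int_{M(\epsilon)}u_\epsilon\,\psi\,dx\Big|\leq\|u_\epsilon\|_{L^2}\,\|\psi\|_{L^\infty}\,|M(\epsilon)|^{1/2}\leq C\,\epsilon^{(n-\alpha)/2}\,\|u_\epsilon\|_{L^2}.
\]
Inserting the bound of Lemma~\ref{lemMYhorm1} with $q=\infty$, namely $\|u_\epsilon\|_{L^2}\leq C\,\epsilon^{-(n-\alpha)/2}\big(\sup_{\epsilon L>1}L^{-k}\int_0^L\sigma_u(r)^{p/2}r^{n-1}\,dr\big)^{1/p}$, the powers of $\epsilon$ cancel and I am left with a bound on $|\langle u_\epsilon,\psi\rangle|$ that is independent of $\epsilon$. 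Because $\chi$ is radial, $\langle u_\epsilon,\psi\rangle=\langle u,\chi_\epsilon\ast\psi\rangle\to\langle u,\psi\rangle$ as $\epsilon\to0$, while the supremum over $\epsilon L>1$ decreases to $\limsup_{L\to\infty}L^{-k}\int_0^L\sigma_u(r)^{p/2}r^{n-1}\,dr$; letting $\epsilon\to0$ and then taking the supremum over all admissible $\psi$ yields $\|u\|_1^p\leq C\limsup_{L\to\infty}L^{-k}\int_0^L\sigma_u(r)^{p/2}r^{n-1}\,dr$, which is the first inequality. The second inequality in each display is a pure comparison on spheres: by the power-mean (Jensen) inequality on $S^{n-1}$ for $2\leq p$,
\[
\sigma_u(r)^{p/2}=\Big(\int_{S^{n-1}}|\wh{u}(r\omega)|^2\,d\omega\Big)^{p/2}\leq |S^{n-1}|^{\,p/2-1}\int_{S^{n-1}}|\wh{u}(r\omega)|^p\,d\omega,
\]
so that multiplying by $r^{n-1}$ and passing to polar coordinates gives $\int_0^L\sigma_u(r)^{p/2}r^{n-1}\,dr\leq C_n\int_{|\xi|<L}|\wh{u}(\xi)|^p\,d\xi$; dividing by $L^k$ and taking $\limsup$ produces the claimed bound.

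The general statement is obtained by repeating the duality argument with the full force of Lemma~\ref{lemMYhorm1}: for $1<q\leq\infty$ and $r$ with $\tfrac1r+\tfrac{1}{2q}=1$, I estimate $\langle u_\epsilon,\psi\rangle$ by Cauchy--Schwarz followed by H\"older on $M(\epsilon)$ with exponents $q,q'$, obtaining $\|\psi\|_{L^2(M(\epsilon))}\leq\|\psi\|_{L^{2q}}\,|M(\epsilon)|^{(1-1/q)/2}\leq C\,\|\psi\|_{L^{2q}}\,\epsilon^{(n-\alpha)(1-1/q)/2}$; this matches exactly the factor $\epsilon^{(\alpha-n)(1-1/q)/2}$ coming out of the Lemma, and the two powers cancel once more. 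I expect the main point to be precisely this cancellation of exponents, which forces the normalisations to align and singles out the admissible range $2\leq p<\tfrac{2n}{\alpha+(n-\alpha)/q}$: that range is equivalent to $k=n-\tfrac{\alpha p}{2}-(n-\alpha)\tfrac{p}{2q}>0$, which is what guarantees that the supremum over $\epsilon L>1$ descends to a finite $\limsup$ and that the resulting bound on the dual norm is non-trivial. The only technical care needed is at the endpoint $p=2$, where the exponent $\tfrac{2p}{p-2}$ in Lemma~\ref{lemMYhorm1} degenerates; there one runs the same duality with the plain Plancherel estimate (equivalently, a limiting argument from $p>2$), recovering the $L^2$ statement dual to Theorem~\ref{thm-cohe-str}.
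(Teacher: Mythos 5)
Your proposal is correct and follows essentially the same route as the paper's proof: mollify $u$ by $\chi_\epsilon$, apply Lemma~\ref{lemMYhorm1}, pair $u_\epsilon$ with $\psi$ via Cauchy--Schwarz plus H\"older so that the Minkowski-content bound $|{\cdot}(\epsilon)|\leq C\epsilon^{n-\alpha}$ cancels the $\epsilon^{(\alpha-n)(1-1/q)}$ factor from the lemma, then let $\epsilon\to0$, dualize over $\psi$, and compare the spherical integral with the ball integral by Jensen/H\"older on $S^{n-1}$. The only difference is that you apply the content hypothesis to $S=M$ itself rather than to $S=\mathrm{supp}\,u\cap\mathrm{supp}\,\psi$ as the paper does, which is in fact slightly cleaner since it sidesteps any worry about whether $\mu(S)\neq0$ and guarantees $\mathrm{supp}\,u_\epsilon\subset M(\epsilon)$ outright.
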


\begin{proof}
Choose an even function $ \chi \in C_{c}^{\infty} (R^{n}) $ with support in unit ball and $\int_{\mathbb{R}^{n}}\chi(x) dx = 1$.
Let $\chi_{\epsilon}(x) = \epsilon^{-n}\chi(x/\epsilon)$ and $u_{\epsilon}=u\ast\chi_{\epsilon}$. Then by Lemma \ref{lemMYhorm1},
      \bee
           \|u_{\epsilon}\|^{2} \leqslant C\ \epsilon^{(\alpha-n)(1-\frac{1}{q})} \bigg(\sup_{\epsilon L>1}\frac{1}{L^k} \int_L^{2L}(\sigma_u(r))^{\frac{p}{2}} r^{n-1} dr\bigg)^{\frac{2}{p}}.
      \eee
Let $\psi\in C_c^{\infty}(\mathbb{R}^n)$. Let $S=supp\ u\cap supp\ \psi$ where $supp\ \psi$ is contained in a ball $B_{R_{\psi}}(0)$ of radius $R_{\psi}$. Since $S$ is a bounded subset of $M$, by hypothesis, we have
      \bee
           \underset{\epsilon\rightarrow0}{\limsup} |S_{\epsilon}| \epsilon^{\alpha-n} \leq c\mu(S) <\infty.
      \eee
For given $0<\delta<1$, there exists $\epsilon_0$ such that for all $\epsilon<\epsilon_0$, $|S(\epsilon)|\epsilon^{\alpha-n}\leq C(\mu(S)+\delta)\leq C_M$. So, for $k=n-\frac{\alpha p}{2}-(n-\alpha)\frac{p}{2q}$,
  \begin{eqnarray*}
      |<u_{\epsilon},\psi>|^2 &\leq& \|u_{\epsilon}\|_2^2 \int_{S_{\epsilon}}|\psi|^2 \\
      &\leq& \|u_{\epsilon}\|_2^2 \bigg(\int_{\R^n}|\psi|^{2q}\bigg)^{\frac{1}{q}} |S_{\epsilon}|^{1-\frac{1}{q}} \\
      &\leq& C_M\|\psi\|_{2q}^2\ \epsilon^{(n-\alpha)(1-\frac{1}{q})}\|u_{\epsilon}\|_2^2\\
      &\leq& C\|\psi\|_{2q}^2 \big(\sup_{\epsilon L>1}\frac{1}{L^k}\int_0^{L}(\sigma_u(r))^{\frac{p}{2}} r^{n-1}dr\big)^{\frac{2}{p}}.
      \end{eqnarray*}
 Thus
       \Bea
              \|u\|_r^p &\leq& C\ \underset{L\rightarrow\infty}{\limsup}\ \frac{1}{L^{n-\frac{\alpha p}{2}-(n-\alpha)\frac{p}{2q}}} \int_0^{L}(\sigma_u(r))^{p}{2} r^{n-1}dr\\
              &\leq& C'\ \underset{L\rightarrow\infty}{\limsup}\ \frac{1}{L^{n-\frac{\alpha p}{2}-(n-\alpha)\frac{p}{2q}}} \int_{|\xi|<L}|\widehat{u}(\xi)|^pd\xi.
       \Eea
\end{proof}
Note that $1\leq r<2$ in the above result. We now prove that the lower bound $\|u\|_r$ in the above theorem can be improved and that Theorem \ref{thm-cohe-str} is optimal.

\begin{thm}\label{thm-alpha-density}
Let $\nu$ be a finite Radon measure supported in a compact set $E$ such that the $\alpha$-dimensional upper Minkowski content of $S$ is non-zero and bounded above by $\nu(S)$ for any $S\subset E$ with $\nu(S)\neq 0$. Let $u$ be a tempered distribution supported in $E$ such that for some $2\leq p <2n/\alpha$,
    \bee
        \underset{L\rightarrow\infty}{\limsup}\ \frac{1}{L^{n-\frac{\alpha p}{2}}} \int_{|\xi|\leq L} |\wh{u}(\xi)|^p d\xi <\infty.
    \eee
Then $u$ is an $L^2$ density $\ u_0\ d\nu$ on $E$ and
    \bee
        \Big(\int_{E}|u_0|^2d\nu\Big)^{p/2} \leq\ C\ \underset{L\rightarrow\infty}{\limsup}\ \frac{1}{L^{n-\frac{\alpha p}{2}}} \int_{|\xi|\leq L} |\wh{u}(\xi)|^p d\xi <\infty.
    \eee
\end{thm}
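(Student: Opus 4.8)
The plan is to prove the dual statement that the linear functional $\psi\mapsto\langle u,\psi\rangle$ on $C_c^\infty(\R^n)$ is bounded by $C\,I^{1/p}\,\|\psi\|_{L^2(d\nu)}$, where $I=\limsup_{L\to\infty}L^{-(n-\alpha p/2)}\int_{|\xi|\le L}|\wh u(\xi)|^p\,d\xi<\infty$ is the quantity on the right. Once this estimate is available, the functional descends to $L^2(d\nu)$: it annihilates every $\psi$ with $\|\psi\|_{L^2(d\nu)}=0$, and since the restrictions to $\mathrm{supp}\,\nu$ of functions in $C_c^\infty$ are dense in $L^2(d\nu)$ (as $\nu$ is a finite Radon measure), it extends uniquely to a bounded functional of norm $\le C I^{1/p}$. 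By the Riesz representation theorem there is $u_0\in L^2(d\nu)$ with $\langle u,\psi\rangle=\int \psi\,u_0\,d\nu$ for every test function, i.e. $u=u_0\,d\nu$, and $\|u_0\|_{L^2(d\nu)}\le C I^{1/p}$, which is the asserted inequality after raising to the power $p$.

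To obtain this key estimate I would regularize, following the Agmon--H\"ormander scheme already used for \thmref{LBTheorem}. Fix an even $\chi\in C_c^\infty(\R^n)$ supported in the unit ball with $\int\chi=1$, set $\chi_\epsilon(x)=\epsilon^{-n}\chi(x/\epsilon)$ and $u_\epsilon=u*\chi_\epsilon$, a smooth compactly supported function. Applying \lemref{lemMYhorm1} with $q=\infty$ (so that $k=n-\alpha p/2$ and the power of $\epsilon$ is $\alpha-n$) gives
\[
\|u_\epsilon\|_2^2\le C\,\epsilon^{\alpha-n}\Big(\sup_{\epsilon L>1}\tfrac{1}{L^{n-\alpha p/2}}\int_0^L(\sigma_u(r))^{p/2}r^{n-1}\,dr\Big)^{2/p}.
\]
The spherical Jensen inequality $(\sigma_u(r))^{p/2}\le |S^{n-1}|^{p/2-1}\int_{S^{n-1}}|\wh u(r\omega)|^p\,d\omega$, valid since $p\ge2$, converts the inner integral into $\int_{|\xi|\le L}|\wh u|^p\,d\xi$, so the supremum is controlled by $I$; more precisely, because $\wh u$ is smooth (Paley--Wiener) the averaged quantity is bounded on every finite $L$-range, and the $\limsup$ hypothesis makes $\sup_{L>1/\epsilon}$ as close to a fixed constant times $I$ as we wish for $\epsilon$ small. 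Hence $\|u_\epsilon\|_2^2\le C\,\epsilon^{\alpha-n}I^{2/p}$, up to an arbitrarily small additive error that is discarded at the end.

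Now pair with $\psi$. Since $\chi$ is even, $\langle u_\epsilon,\psi\rangle=\langle u,\chi_\epsilon*\psi\rangle\to\langle u,\psi\rangle$ as $\epsilon\to0$. The product $u_\epsilon\psi$ is supported in a set $S(\epsilon)$ that is essentially the $\epsilon$-neighborhood of the compact set $S=\mathrm{supp}\,u\cap\mathrm{supp}\,\psi$, so Cauchy--Schwarz gives $|\langle u_\epsilon,\psi\rangle|^2\le\|u_\epsilon\|_2^2\int_{S(\epsilon)}|\psi|^2\,dx$. The decisive point is the localization estimate
\[
\limsup_{\epsilon\to0}\ \epsilon^{\alpha-n}\int_{S(\epsilon)}|\psi(x)|^2\,dx\ \le\ C\int_S|\psi|^2\,d\nu,
\]
which I would prove by partitioning $S$ into finitely many small pieces $S_i$, replacing $|\psi|^2$ by its nearly constant value $|\psi(x_i)|^2$ on each $S_i(\epsilon)$, applying the upper Minkowski content hypothesis $\limsup_\epsilon(2\epsilon)^{\alpha-n}|S_i(\epsilon)|\le c\,\nu(S_i)$ to each piece, summing the finitely many $\limsup$'s, and refining the partition so that $\sum_i|\psi(x_i)|^2\nu(S_i)\to\int_S|\psi|^2\,d\nu$. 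Combining the three ingredients and letting $\epsilon\to0$ yields $|\langle u,\psi\rangle|^2\le C\,I^{2/p}\int_S|\psi|^2\,d\nu\le C\,I^{2/p}\|\psi\|_{L^2(d\nu)}^2$, the required bound.

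The main obstacle is precisely this localization step together with the attendant support bookkeeping: one must dominate $u_\epsilon\psi$ by the $\epsilon$-neighborhood of a \emph{compact piece} of $E$ rather than of all of $E$, and one must pass from the content bound for whole sets to a genuine integral against $d\nu$ of the continuous weight $|\psi|^2$ while keeping the constant independent of $\psi$. Handling the $\epsilon$-dependence of the support (it is really $\mathrm{supp}\,u\cap(\mathrm{supp}\,\psi)(\epsilon)$, which shrinks to $S$) requires a continuity-from-above argument for the finite measure $\nu$, and the interchange of $\limsup$ with the finite sum over pieces must be justified before the partition is refined. By contrast, the regularization and spherical-Jensen steps are routine once \lemref{lemMYhorm1} is in hand.
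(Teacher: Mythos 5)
Your proposal is correct and follows essentially the same route as the paper's proof: regularize $u$ by $\chi_\epsilon$, bound $\|u_\epsilon\|_2^2$ via Lemma~\ref{lemMYhorm1} (with the spherical Jensen step that the paper uses implicitly), prove the localization estimate $\epsilon^{\alpha-n}\int_{S(\epsilon)}|\psi|^2\,dx \lesssim \|\psi\|_{L^2(d\nu)}^2$ by a cube decomposition with uniform continuity of $\psi$ and the Minkowski content hypothesis on each cube, and conclude by Cauchy--Schwarz and duality. The only cosmetic differences are that you use point values $|\psi(x_i)|^2$ where the paper uses cube averages, and you spell out the Riesz representation step that the paper leaves implicit.
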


\begin{proof}
Let $\psi\in C_c^{\infty}(\mathbb{R}^n)$. Let $S=supp\ u\cap supp\ \psi$. Then $S$ is bounded and let $M$ be the smallest closed cube that contains $S$. As in Theorem \ref{thm-cohe-str}, for $0<\delta<1$, let $\tilde{\CQ_0}$ be the collection of all half open cubes $Q_k=\{x=(x_1,...x_n)\in M: (k_j-1)\delta<x_j \leq k_j\delta\}$, ($k=(k_1,...k_n),\ k_j\in\Z$) and $\CQ_0$ be the collection of all $Q_k\in\tilde{\CQ_0}$ such that $\nu(Q_k\cap E)\neq 0$. Denote $\mu=\nu|_S$. Since $S$ is bounded, there are finite $Q_k$'s in $\CQ_0$. Let $\delta_0=\min_{Q_k\in\CQ_0}\{\mu(Q_k)\}$. By hypothesis, for each $k$, there exists $\delta_k$ such that
\bea
\nonumber |(Q_k\cap
S)(\epsilon)|\epsilon^{\alpha-n} &\leq& C_n\mu(Q_k\cap S) + C_n\tilde{\delta_0}\delta\\
&\leq& 2C_n\mu(Q_k\cap S),
\label{thmdensity1}\\
|S(\epsilon)|\epsilon^{\alpha-n}&\leq& \mu(S) + \delta\label{thmdensity3}
\eea
for all $\epsilon \leq\delta_k$. Fix $\epsilon_0= \min\{\delta,\ \delta_0,\ \delta_1,\ \delta_2,..\}$. For every $\epsilon<\epsilon_0$, let $\CQ^{\epsilon}_0$ denote the collection of all $Q_k$ in $\tilde{\CQ_0}$ such that $|Q_k\cap S(\epsilon)|\neq 0$.
\Bea
 \epsilon^{\alpha-n}\int_{S(\epsilon)}|\psi(x)|^2dx &=& \epsilon^{\alpha-n}\sum_{Q_k\in\CQ^{\epsilon}_0}\int_{Q_k\cap S(\epsilon)} |\psi(x)|^2 dx\\
&\leq& \epsilon^{\alpha-n}\sum_{Q_k\in\CQ^{\epsilon}_0\backslash\CQ_0} \int_{Q_k\cap S(\epsilon)} |\psi(x)|^2 dx\\
&& +2\epsilon^{\alpha-n}\sum_{Q_k\in\CQ_0}\int_{Q_k\cap S(\epsilon)} \bigg|\psi(x)-\frac{1}{\mu(Q_k)}\int_{Q_k}\psi(y)d\mu(y)\bigg|^2 dx\\
&& + 2\epsilon^{\alpha-n}\sum_{Q_k\in\CQ_0}\int_{Q_k\cap S(\epsilon)} \bigg|\frac{1}{\mu(Q_k)}\int_{Q_k}\psi(y)d\mu(y)\bigg|^2 dx.
\Eea
Since, for $Q_k\in\CQ^{\epsilon}_0\backslash\CQ_0$, $\mu(Q_k)=0$, from (\ref{thmdensity1}),
\Bea
&&\epsilon^{\alpha-n}\sum_{Q_k\in\CQ^{\epsilon}_0\backslash\CQ_0} \int_{Q_k\cap S(\epsilon)} |\psi(x)|^2 dx\\
&&\ \ \leq 2C_n \|\psi\|_{\infty}^2 \sum_{Q_k\in\CQ^{\epsilon}_0\backslash\CQ} \mu(Q_k) = 0.
\Eea
Hence,
\bea
 \nonumber\epsilon^{\alpha-n}\int_{S(\epsilon)}|\psi(x)|^2dx &\leq& 2\epsilon^{\alpha-n}\sum_{Q_k\in\CQ_0}\int_{Q_k\cap S(\epsilon)} \bigg|\psi(x)-\frac{1}{\mu(Q_k)}\int_{Q_k}\psi(y)d\mu(y)\bigg|^2 dx\\
\nonumber&& + 2\epsilon^{\alpha-n}\sum_{Q_k\in\CQ_0}\int_{Q_k\cap S(\epsilon)} \bigg|\frac{1}{\mu(Q_k)}\int_{Q_k}\psi(y)d\mu(y)\bigg|^2 dx\\
\nonumber &\leq& e_{\delta} +2\sum_{Q_k\in\CQ_0} \epsilon^{\alpha-n}|Q_k\cap S(\epsilon)|\frac{1}{\mu(Q_k)}\int_{Q_k}|\psi(y)|^2d\mu(y),
\eea
where $$e_{\delta}=2\sum_{Q_k\in\CQ_0}\epsilon^{\alpha-n}\int_{Q_k\cap S(\epsilon)} \bigg|\psi(x)-\frac{1}{\mu(Q_k)}\int_{Q_k}\psi(y)d\mu(y)\bigg|^2 dx.$$ By (\ref{thmdensity1}), $$\epsilon^{\alpha-n}|Q_k\cap S(\epsilon)|\leq \epsilon^{\alpha-n} |(Q_k\cap S)(\epsilon)|\leq 2C_n\mu(Q_k).$$ Hence
\bea
\nonumber \epsilon^{\alpha-n}\int_{S(\epsilon)}|\psi(x)|^2dx &\leq& e_{\delta} +4C_n\sum_{Q_k\in\CQ_0} \int_{Q_k}|\psi(y)|^2d\mu(y)\\
&=& e_{\delta} +4C_n\int_{E}|\psi(y)|^2d\mu(y).\label{thmdensity2}
\eea
Since $\psi$ is compactly supported continuous function, $|\psi(x)-\frac{1}{\mu(Q_k)}\int_{Q_k}\psi(y)d\mu(y)|\rightarrow 0$ uniformly in $x$ and $Q_k$ as $\delta$ goes to zero. $\underset{x\in S(\epsilon)}{\sup}|\psi(x)-\frac{1}{\mu(Q_k)}\int_{Q_k}\psi(y)d\mu(y)|\rightarrow 0$ as $\delta$ goes to zero.
\Bea
e_{\delta} &=& \epsilon^{\alpha-n}\sum_{Q_k\in\CQ_0} \int_{Q_k\cap S(\epsilon)} |\psi(x)-\frac{1}{\mu(Q_k)}\int_{Q_k}\psi(y)d\mu(y)|^2 dx\\
&\leq& \epsilon^{\alpha-n}\sum_{Q_k\in\CQ_0}|Q_k\cap S(\epsilon)|\sup_{x\in S(\epsilon)}  \bigg|\psi(x)-\frac{1}{\mu(Q_k)}\int_{Q_k}\psi(y)d\mu(y)\bigg|^2 \\
&\leq& \epsilon^{\alpha-n}|S(\epsilon)| \sup_{x\in S(\epsilon)}  \bigg|\psi(x)-\frac{1}{\mu(Q_k)}\int_{Q_k}\psi(y)d\mu(y)\bigg|^2.
\Eea
Then together with (\ref{thmdensity3}), $e_{\delta}$ goes to zero as $\delta$ goes to zero. Thus from (\ref{thmdensity2}), for given $0<\delta<1$, there exists small $\epsilon_0$ such that for all $\epsilon<\epsilon_0$,
\bea
\nonumber\epsilon^{\alpha-n}\int_{S(\epsilon)}|\psi(x)|^2dx &\leq& e_{\delta} + 4C_n \int_{E}|\psi(y)|^2d\nu(y)\\
&=& e_{\delta} + 4C_n \|\psi\|^2_{L^2(d\nu)}.\label{thmdensityfin1}
\eea
where $e_{\delta}$ tends to zero as $\delta$ tends to zero.\\

Now we proceed as in the Theorem \ref{LBTheorem}. Choose an even function $ \chi \in C_{c}^{\infty} (R^{n}) $ with support in unit ball and $\int_{\mathbb{R}^{n}}\chi(x) dx = 1$. Let $\chi_{\epsilon}(x) = \epsilon^{-n}\chi(x/\epsilon)$ and $u_{\epsilon}=u\ast\chi_{\epsilon}$. Then by Lemma \ref{lemMYhorm1},
       \bea
          \nonumber   \|u_{\epsilon}\|^{2} &\leq& C\ \epsilon^{\alpha-n}\bigg(\sup_{\epsilon L>1}\frac{1}{L^{n-\alpha p/2}} \int_0^{L} (\sigma_u(r))^{\frac{p}{2}} r^{n-1}dr\bigg)^{\frac{2}{p}}\\
             &\leq& C\ \epsilon^{\alpha-n}\bigg(\sup_{\epsilon L>1}\frac{1}{L^{n-\alpha p/2}} \int_{|\xi|<L} |\hat{u}(\xi)|^pd\xi\bigg)^{\frac{2}{p}}.\label{thmdensityfin2}
      \eea
We have $\epsilon\rightarrow 0$ as $\delta\rightarrow 0$. Thus
     \begin{eqnarray*}
         |<u,\psi>|^2 &=& \underset{\epsilon\rightarrow0}{\lim}\ |<u_{\epsilon},\psi>|^2 \\
             &\leq& \underset{\epsilon\rightarrow0}{\lim}\ \|u_{\epsilon}\|_2^2\int_{S_{\epsilon}}|\psi|^2 \\
             &\leq& \underset{\epsilon\rightarrow0}{\lim}\ \|u_{\epsilon}\|_2^2\epsilon^{n-\alpha}(e_\delta + C\|\psi\|_{L^2(d\nu)}^2)\ \text{from}\ (\ref{thmdensityfin1}).
\end{eqnarray*}
Thus letting $\delta$ go to zero, together with (\ref{thmdensityfin2}),
\bee
            |<u,\psi>|^2 \leq C\|\psi\|_{L^2(d\mu)}^2\ \big(\underset{L\rightarrow\infty}{\limsup} \frac{1}{L^{n-\alpha p/2}} \int_{|\xi|\leq L}|\hat{u}(\xi)|^pd\xi\big)^{\frac{2}{p}}
     \eee
 Thus $u$ is an $L^2$ density $\ u_0\ d\nu$ on $E$ and
      \bee
          \Big(\int_{E}|u_0|^2d\nu\Big)^{p/2} \leq\ C\ \underset{L\rightarrow\infty}{\limsup}\ \frac{1}{L^{n-\frac{\alpha p}{2}}} \int_{|\xi|\leq L} |\wh{u}(\xi)|^p d\xi <\infty.
      \eee
\end{proof}

In \cite{AgmonHormander}, the authors studied the Fourier asymptotics of measures supported in a smooth manifold and proved similar results:\\

\emph{Let $u$ be a tempered distribution such that $\wh{u}\in L^2_{loc}$ and}
$$\underset{L\rightarrow\infty}{\limsup}\frac{1}{L^{k}}\int_{|\xi|\leq L}|\wh{u}(\xi)|^2d\xi<\infty.$$
\emph{If the restriction of $u$ to an open subset $X$ of $\R^n$ is supported by a $C^1$-submanifold $M$ of codimension $k$, then it is an $L^2$-density $u_0dS$ on $M$ and}
$$\int_M|u_0|^2dS\leq C\ \underset{L\rightarrow\infty}{\limsup}\frac{1}{L^k}\int_{|\xi|\leq R} |\wh{u}(\xi)|^2d\xi,$$
\emph{where $C$ only depends on $n$.}


\section{$L^p-$asymptotic properties of fractal measures for $1\leq p \leq 2$}
\setcounter{equation}{0}
Let $\mu$ denote a fractal measure supported in an
$\alpha$-dimensional set $E\subset\R^n$ and $f\in L^q(d\mu)$
($1\leq q\leq\infty$). Suppose $1\leq p\leq 2$ dependent on $q$.
In this section, we obtain lower bounds for
  \bee
    \underset{L\rightarrow\infty}{\liminf}L^{-k}\int_{|\xi|\leq L}|\wh{fd\mu}(\xi)|^pd\xi,
  \eee
for positive $k=n-\alpha$ and prove generalized Hardy inequality for fractal measures on $R^n$ of dimension $0<\alpha<n$.\\

Consider the generalized Hardy inequality for discrete measures proved by the authors in \cite{Hudson}:
\begin{thm}\label{thmHudson3zerop2}\cite{Hudson}
Let $c_k$ be a sequence of complex numbers, $a_k$ be a sequence of
real numbers and $fd\mu_0$ denote the zero dimensional measure
$f(x) = \sum_1^{\infty}c_k\delta(x-a_k)$ where $\delta$ is the
usual Dirac measure at zero.
\begin{enumerate}
\item Let $a_1<a_2<...$ and  assume $\wh{fd\mu_0}=\sum
c_ke^{ia_kx}$ belongs to the class of almost periodic functions.
Then,
 \bee
  \sum_1^{\infty}\frac{|c_k|}{k}\leq C\ \underset{L\rightarrow\infty}{\lim} L^{-1}\int_{-L}^L
  |\wh{fd\mu_0}(x)|dx.
 \eee
\item  Let $a_k$ be a sequence of real numbers, not necessarily
increasing and $1<p\leq 2$. Assume that $u(x)=\wh{fd\mu_0(x)}$
converges to $\sum_1^{\infty}c_ke^{ia_kx}$ in the class of almost
periodic functions. Then
 \bee
  \sum_1^{\infty}\frac{|c_k|^p}{k^{2-p}}\leq \sum_1^{\infty} \frac{|c_k'|^p}{k^{2-p}}\leq C\ \lim L^{-1}\int_{-L}^L|u(x)|^pdx,
 \eee
where $c_k'$ is the nonincreasing rearrangement of the sequence
$|c_k|$.
\end{enumerate}
\end{thm}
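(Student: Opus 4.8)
The plan is to place everything in the framework of almost periodic functions and to read the two right-hand sides as norms of $u=\wh{fd\mu_0}$ in the Besicovitch space $B^p$. Write $M(g)=\lim_{L\to\infty}(2L)^{-1}\int_{-L}^L g$ for the translation-invariant mean, so that $\lim_{L\to\infty}L^{-1}\int_{-L}^L|u|^p=2\,M(|u|^p)$, and identify an almost periodic $u$ with a genuine function on the Bohr compactification $b\R$ (the compact abelian group dual to $\R$ taken discrete), on which $M$ is exactly the Haar probability integral and the exponentials $x\mapsto e^{ia_kx}$ are the characters $\chi_k$. Under this identification the coefficients are honest Fourier coefficients $c_k=\int_{b\R}u\,\overline{\chi_k}\,dm=M(u\,e^{-ia_k\cdot})$ and $B^p=L^p(b\R,m)$ is an ordinary $L^p$ space of a probability measure; this reduction is what legitimises the soft tools below.

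For part (2) I would first dispose of the left inequality $\sum_k|c_k|^p k^{p-2}\le\sum_k(c_k')^pk^{p-2}$ by the Hardy--Littlewood rearrangement inequality for sequences: since $1<p\le2$ the weight $k^{p-2}$ is nonincreasing, and pairing a fixed family $\{|c_k|^p\}$ against a nonincreasing weight is maximised when the numbers are themselves arranged nonincreasingly, which is exactly the rearrangement $(c_k')^p$. The substance is the right inequality, and the key observation is that $\sum_k(c_k')^pk^{p-2}$ is precisely the $p$-th power of the Lorentz sequence quasinorm $\|(c_k)\|_{\ell^{p',p}}$, where $1/p+1/p'=1$ (indeed $k^{p/p'-1}=k^{p-2}$). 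Thus the assertion is the sharp, Lorentz-refined Hausdorff--Young inequality $\|\wh u\|_{\ell^{p',p}(\widehat{b\R})}\le C\,\|u\|_{L^p(b\R)}$ on the compact group $b\R$.

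I would prove this by real interpolation. The operator $T\colon u\mapsto(c_k)$ satisfies two endpoint bounds: $\|Tu\|_{\ell^\infty}=\sup_k|M(u\,e^{-ia_k\cdot})|\le M(|u|)=\|u\|_{L^1}$ trivially, and $\|Tu\|_{\ell^2}^2=\sum_k|c_k|^2=M(|u|^2)=\|u\|_{L^2}^2$ by Parseval for almost periodic functions. Applying the functor $(\cdot,\cdot)_{\theta,p}$ with $\theta=2/p'$ gives $(L^1,L^2)_{\theta,p}=L^{p,p}=L^p$ on the domain and $(\ell^\infty,\ell^2)_{\theta,p}=\ell^{p',p}$ on the target, the second Lorentz index being the interpolation parameter $p$; this is what upgrades the plain Hausdorff--Young target $\ell^{p'}=\ell^{p',p'}$ to the strictly smaller $\ell^{p',p}$. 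Hence $T\colon L^p\to\ell^{p',p}$ is bounded, which after raising to the $p$-th power is exactly $\sum_k(c_k')^pk^{p-2}\le C\,M(|u|^p)\le C'\lim_{L\to\infty}L^{-1}\int_{-L}^L|u|^p$. It is worth recording why nothing weaker works: plain Hausdorff--Young only yields $c_k'\lesssim k^{-1/p'}\|u\|_p$, and substituting this into the target produces the divergent $\sum_k k^{-1}$, so the gain of the second Lorentz index captures precisely the borderline logarithm and is essential rather than cosmetic.

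The hard part is part (1), the endpoint $p=1$, where the interpolation degenerates and the constant of part (2) blows up as $p\downarrow1$. Here $\sum_k|c_k|/k$ is taken in the \emph{frequency} order $a_1<a_2<\cdots$ with no rearrangement, and some structural hypothesis is unavoidable, since placing one large coefficient at a large index $k$ would otherwise make the left side arbitrarily large relative to $M(|u|)$. My plan is to use the monotone ordering of the spectrum as a substitute for analyticity and run a Hardy-type argument: since $\sum_k|c_k|/k\asymp\sum_N C_N/N^2$ with $C_N=\sum_{k\le N}|c_k|$, it suffices to control $C_N$ by $N\,M(|u|)$ \emph{with a gain}, and the increasing arrangement of the $a_k$ should supply that gain. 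Concretely I would test $u$ against a trigonometric polynomial of Fej\'er / de la Vall\'ee Poussin type built on the first $N$ frequencies with the phases $\arg c_k$ inserted, so that $M(u\bar P)$ reproduces $C_N$ while the ordering of the frequencies keeps the relevant norm of $P$ under control. Making this kernel yield the exact $1/k$ weight, and verifying convergence in the almost periodic class, is the step I expect to be delicate, and it is the genuine obstacle of the theorem.
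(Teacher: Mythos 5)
First, a point of orientation: the paper you were asked to match contains no proof of this statement. Theorem~3.1 is quoted verbatim from Hudson--Leckband \cite{Hudson} as background for the paper's own Theorem~\ref{ThmHud1p2}; the only trace of its proof machinery in the present paper is Lemma~\ref{lemHud1FT}, which reproduces (in continuous form) the McGehee--Pigno--Smith-type construction on which part (1) rests. So the comparison below is with the known proof in \cite{Hudson}. Your treatment of part (2) is essentially correct and is the standard route: the left inequality is the rearrangement pairing against the nonincreasing weight $k^{p-2}$, and the right inequality, after passing to the Bohr compactification, is the Hardy--Littlewood sharpening of Hausdorff--Young, $\|\wh{u}\|_{\ell^{p',p}}\leq C\|u\|_{L^p}$, obtained by real interpolation between the trivial $L^1\to\ell^{\infty}$ bound and Parseval for almost periodic functions. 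This is the same in spirit as what Hudson--Leckband do (and as the present paper's proof of Theorem~\ref{ThmHud1p2}): interpolate between an $L^1$ endpoint and Plancherel, there via Stein's interpolation with change of measure (Theorem~\ref{thmSteinInterpolation}). Your identification of the weight $k^{p-2}$ with the Lorentz index, and your remark that plain Hausdorff--Young loses exactly a logarithm, are both accurate.

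Part (1) is where your proposal has a genuine gap, and it is worse than ``delicate'': the reduction you propose is provably unworkable. You reduce to bounding $C_N=\sum_{k\leq N}|c_k|$ by $N\,M(|u|)$ ``with a gain,'' to be obtained by testing $u$ against a Fej\'er/de la Vall\'ee Poussin-type polynomial $P_N$ on the first $N$ frequencies, which yields termwise bounds $C_N\leq \|P_N\|_{\infty}\,2M(|u|)$. But take $u(x)=\sum_{k\leq N}e^{ikx}$: then $C_N=N$ while $M(|u|)\asymp\log N$ (the $L^1$ norm of the Dirichlet kernel), so any termwise bound $C_N\leq\beta_N M(|u|)$ valid for all admissible $u$ forces $\beta_N\gtrsim N/\log N$; in particular no gain $\beta_N\lesssim N^{1-\varepsilon}$ is available, and
\begin{equation*}
\sum_N \frac{C_N}{N^2}\leq 2M(|u|)\sum_N\frac{\beta_N}{N^2}\quad\text{with}\quad \sum_N\frac{\beta_N}{N^2}\gtrsim\sum_N\frac{1}{N\log N}=\infty,
\end{equation*}
so the summation-by-parts scheme cannot produce a finite constant. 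This logarithmic failure of linear kernel methods is precisely the obstruction that made the Littlewood conjecture hard. The actual proof in \cite{Hudson} adapts McGehee--Pigno--Smith: one constructs a \emph{single} dual object $F$ with $\|F\|_{\infty}\leq C$ and $\mathrm{Re}\,M(u\bar F)\geq c\sum_k|c_k|/k$, via a nonlinear recursion of the form $F_i=\tfrac{4}{5}F_{i-1}\exp(-|f_i|/4s^2)+f_i/20$ built over blocks of the ordered frequencies --- exactly the recursion the present paper reuses in Lemma~\ref{lemHud1FT} --- thereby controlling the entire weighted sum at once rather than its partial sums. Without this (or an equivalent bounded-dual-function construction), part (1) does not follow from your outline.
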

The authors also proved generalized Hardy inequality for fractal measures $fd\mu$ on $\R^1$ of dimension $\alpha$ $(0<\alpha<1)$ in \cite{Hudson} by generalizing part (1) of the above theorem with additional hypothesis on $\mu$. To prove the same, they introduced $\alpha$-coherent sets in $\R$ ($0<\alpha<1$).
Given $x\in\R$ and a set $E\subset\R$, let $E_x=E\cap(-\infty,x]$.
Let $s=sup\{x:\CH_{\alpha}(E_x)<\infty\}$, $E^0=(E_s)^*$ where,
for a set $E$,
    $$E^*=\{x\in E: 2^{-\alpha}\leq\ol{D^{\alpha}}(\CH_{\alpha}|_E,x)\leq1\}.$$
The set $E\subset\R$ is $\alpha$-coherent ($0<\alpha<1$), if
there is a constant $C$ such that for all $x\leq s$,
    \bee
           \underset{\delta\rightarrow 0}{\limsup}\ |E_x^{0}(\delta)|\delta^{\alpha-1} \leq C\CH_{\alpha}(E_{x}^0),
    \eee
where $|E_x^{0}(\delta)|$ denotes the one dimensional Lebesgue
measure of the $\delta$-distance set $E_x^0(\delta)$ of $E_x^0$. The following was proved in \cite{Hudson}.

\begin{thm}\label{thmHudson1cohequasi}\cite{Hudson}
Suppose $0<\alpha<1$, $f\in L^{1}(d\CH_{\alpha})$ and
$\mu=\CH_{\alpha}|_E$ where $E$ is either $\alpha$-coherent or
quasi $\alpha$-regular. Then, there exists a non-zero finite
constant independent of $f$ such that
 \bee
  \int_E \frac{|f(x)|d\mu(x)}{\CH_{\alpha}(E_x^0)} \leq\ C\ \underset{L\rightarrow\infty}{\liminf} L^{\alpha-1} \int_{-L}^L |\wh{fd\mu}(x)|dx.
 \eee
\end{thm}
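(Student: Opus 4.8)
The plan is to imitate, in the $L^1$ setting and with the cumulative weight $\CH_{\alpha}(E_x^0)$, the interval decomposition and positive-kernel representation used in the proof of \thmref{thm-cohe-str}, replacing the generic Minkowski-content hypothesis by the $\alpha$-coherence condition. Fix a small scale $\epsilon>0$ and partition the part of $E$ lying to the left of $s=\sup\{x:\CH_{\alpha}(E_x)<\infty\}$ into half-open intervals $Q_k$ of length $\epsilon$, keeping those with $\mu(Q_k)\neq0$ and ordering them by position. On each such interval the weight $x\mapsto 1/\CH_{\alpha}(E_x^0)$ is, up to a controlled error, constant, so that
\[
\int_E\frac{|f(x)|}{\CH_{\alpha}(E_x^0)}\,d\mu(x)\ \approx\ \sum_k\frac{1}{W_k}\int_{Q_k}|f|\,d\mu,\qquad W_k:=\CH_{\alpha}(E_{x_k}^0),
\]
and since $f\ge0$ the inner integral is the local mass $m_k=\int_{Q_k}f\,d\mu$.

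First I would represent each $m_k$ through a one-sided positive Schwartz kernel exactly as in \thmref{thm-cohe-str}: choosing $\phi\ge0$ with $\wh{\phi}$ supported in the unit ball and normalised so that $\int_{A_{r_1}(0)}\phi=c_0$, and writing $m_k$ as a constant multiple of $L\int_{Q_k^L}(\phi_L\ast fd\mu)(x)\,dx$ over the one-sided neighbourhood $Q_k^L$ of that proof. Positivity of $f$ and $\phi$ lets me drop the absolute value inside, and $|Q_k^L|\le|(Q_k\cap E)(r/L)|$ is where the coherence hypothesis enters: it bounds this Lebesgue measure by $C(r/L)^{1-\alpha}$ times the local $\CH_{\alpha}$-mass of the good set, i.e. by $C\,r^{1-\alpha}L^{\alpha-1}\mu(Q_k)$. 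The difference from \thmref{thm-cohe-str} is that, with $p=1$, Plancherel is unavailable; instead I would bound $\|\phi_L\ast fd\mu\|_{L^\infty}\le L^{-1}\int|\wh{\phi}(\xi/L)|\,|\wh{fd\mu}(\xi)|\,d\xi\le L^{-1}\|\wh{\phi}\|_\infty\int_{-L}^L|\wh{fd\mu}(\xi)|\,d\xi$, using that $\wh{\phi}(\cdot/L)$ is supported in $[-L,L]$. Combining these gives, for each $k$,
\[
\frac{m_k}{W_k}\ \le\ C\,\frac{\mu(Q_k)}{W_k}\,L^{\alpha-1}\int_{-L}^L|\wh{fd\mu}(\xi)|\,d\xi .
\]

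The main obstacle is the summation $\sum_k m_k/W_k$. Under the density normalisation built into $E^0$ (each of its points having $\CH_{\alpha}$-density between $2^{-\alpha}$ and $1$), a good interval carries $\mu(Q_k)\approx\epsilon^{\alpha}$, so $W_k\approx k\,\epsilon^{\alpha}$ and $\mu(Q_k)/W_k\approx1/k$; summing the displayed per-interval bound therefore costs a factor $\sum_k 1/k\sim\log N$, which diverges as $\epsilon\to0$. Hence the crude step of bounding every local mass by the \emph{global} Fourier integral is too lossy, and the real work is to localise the frequency contribution: I would instead let the kernel scale $L=L_k$ track the position, so that the estimate for $m_k$ reads against a frequency band $I_k$ rather than all of $[-L,L]$, arrange the bands to be essentially disjoint, and run a summation-by-parts pairing each increment of the cumulative weight $\CH_{\alpha}(E_x^0)$ with its own band. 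The density lower bound $2^{-\alpha}$ guarantees that $\CH_{\alpha}(E_x^0)$ grows at a definite dyadic rate, which is exactly what converts the telescoped sum into a bounded multiple of $\underset{L\to\infty}{\liminf}\,L^{\alpha-1}\int_{-L}^L|\wh{fd\mu}(\xi)|\,d\xi$ and removes the logarithm; this telescoping is the crux of the argument. The quasi $\alpha$-regular case runs along the same lines, with the regularity supplying the two-sided density control that otherwise comes from the defining constraint on $E^0$.
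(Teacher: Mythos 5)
Your setup is sound and closely mirrors the paper's own strategy (the paper states this theorem as a citation from \cite{Hudson}; its own proof of the continuous analogue is that of Theorem \ref{ThmHud1p2}): partition into intervals, one-sided positive kernel $\phi_L$ as in Theorem \ref{thm-cohe-str}, coherence to bound the distance sets, and the $L^\infty$ bound on $\phi_L\ast fd\mu$ in place of Plancherel. You also correctly identify the crux: bounding every local mass $m_k$ against the full Fourier integral makes $\sum_k m_k/W_k$ cost a harmonic factor $\sum_k 1/k\sim\log N$. But your proposed repair of this step is a genuine gap, not a proof. You assert that the scale can be made position-dependent so that each $m_k$ "reads against a frequency band $I_k$", with essentially disjoint bands, and that a summation by parts then telescopes the logarithm away. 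No mechanism is given for this, and there is a basic obstruction: $m_k$ is recovered from $\phi_L\ast fd\mu$ restricted to the \emph{spatial} set $Q_k^L$, and spatial restriction does not localize frequency --- every interval sees the whole range $|\xi|\le L$, and the uncertainty principle prevents assigning to the many short intervals $Q_k$ disjoint frequency bands that would still capture their masses. A second, smaller gap: $\alpha$-coherence controls the distance sets of the \emph{cumulative} sets $E_x^0$ only, not of localized pieces such as $Q_k\cap E$, so the bound $|(Q_k\cap E)(r/L)|\le Cr^{1-\alpha}L^{\alpha-1}\mu(Q_k)$ you invoke is really the paper's stronger Minkowski-content hypothesis, not the hypothesis of this theorem.

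What actually removes the logarithm, both in Hudson--Leckband's argument and in the paper's $n$-dimensional, $1\le p\le 2$ version, is a different device, and it is the real content of the proof. The cubes are grouped dyadically by the size of the cumulative mass, $\CS_j=\{Q_k:\ 2^jP_\delta\delta^n\le |E'_k(\delta)|<2^{j+1}P_\delta\delta^n\}$, so that the weight contributes a geometric factor $2^{-j(2-p)}$ (just $2^{-j}$ at $p=1$); the sum over $j$ then converges provided one can prove, for each group $S_j$ (a union of arbitrarily many small cubes), the estimate
\[
\frac{\delta^{-n}}{P_\delta}\int_{S_j}|\phi_L\ast fd\mu(x)|\,dx\ \le\ C\int_{\R^n}|\wh{\phi_L\ast fd\mu}(\xi)|\,d\xi
\]
with $C$ independent of $\delta$, $L$, and the number of cubes in $S_j$. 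This is Lemma \ref{lemHud1FT}, proved by a Riesz-product-type construction: functions $f_i$ with $|\wh{f_i}|\equiv\delta^{-n}/P_\delta$ on the $i$-th cube, phases aligned with $g_L=\phi_L\ast fd\mu$, are combined iteratively with damping factors so that the resulting multiplier $F$ satisfies $\|F\|_\infty\le 1/4$ while $\mathrm{Re}(\wh{F}g_L)$ still dominates a fixed multiple of $\delta^{-n}P_\delta^{-1}|g_L|$ pointwise on $S_j$; pairing $\wh F$ with $g_L$ gives the inequality, whereas a naive sup-norm bound would lose a factor of the number of cubes. (For $1<p<2$ this $L^1$ inequality is interpolated with Plancherel via Theorem \ref{thmSteinInterpolation}.) The paper also first adjoins a small self-similar Cantor set $C_\epsilon$ to $E$, which keeps the weights $1/\mu(E'_x)$ bounded and anchors the dyadic scale $P_\delta\delta^n$ at $j=0$; your sketch does not address the blow-up of $1/W_k$ on the first intervals. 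Without Lemma \ref{lemHud1FT} or an equivalent substitute, your argument does not close.
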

\begin{rem}
Examples in \cite{Hudson} show that there are quasi regular sets in $\R$ which are not $\alpha$-coherent and there are $\alpha$-coherent sets which are not quasi regular, for given $0<\alpha<1$.
\end{rem}
In this section, using the upper Minkowski content and finding a
continuous analogue of the arguments used in the proof of the Theorem \ref{thmHudson1cohequasi}, we prove an analogous version of part(2) of the Theorem \ref{thmHudson3zerop2} for $0<\alpha<n$, $n\geq 1$ and $1\leq p\leq 2$ with a slight modification in the hypothesis:\\

\begin{thm}\label{ThmHud1p2}
Let $E\subset\R^n$ be a compact set such that the $\alpha$-dimensional upper Minkowski content of $S$ is non-zero and bounded above by $\CH_{\alpha}(S)$ for any $S\subset E$ with $\CH_{\alpha}(S)\neq 0$ for some $0<\alpha<n$ and let $\mu=\CH_{\alpha}|_E$. Let $f\in L^p(d\mu)$ be a positive function, for $1\leq p\leq 2$. Then there exists a constant $C$ independent
of $f$ such that
  \be
     \int_E\frac{|f(x)|^p}{[\mu(E_x)]^{2-p}}d\mu(x) \leq\ C\ \underset{L\rightarrow\infty}{\liminf}
     \frac{1}{L^{n-\alpha}} \int_{|\xi|\leq L}|\wh{fd\mu}(\xi)|^pd\xi, \label{eqthmhud1p2}
  \ee
where $E_x=E\cap[(-\infty,x_1]\times...\times(-\infty,x_n]]$ for
$x=(x_1,...x_n)\in\R^n$.
\end{thm}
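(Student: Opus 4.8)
The plan is to run the cube-decomposition scheme of the proof of Theorem~\ref{thm-cohe-str}, but now carrying the cumulative weight $\mu(E_x)^{p-2}$ through every step. First I would place $E$ in a large cube in the positive orthant and, for small $\epsilon>0$, partition it into the half-open cubes $Q_k$ of side $\epsilon$, keeping only the finitely many $Q_k$ with $\mu(Q_k)\neq0$. Writing $m_k=\int_{Q_k}f\,d\mu$ and letting $M_k=\mu(E_{x_k})$ be the cumulative mass at the upper corner $x_k$ of $Q_k$, the Minkowski hypothesis gives, exactly as in (\ref{thmp2pack1}), a $\delta_k$ with $|(Q_k\cap E)(\delta)|\,\delta^{\alpha-n}\le 2C_n\mu(Q_k)$ for $\delta\le\delta_k$. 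Since $\mu(E_x)$ is nondecreasing in the coordinatewise order and changes by at most the mass of a single layer of cubes, on each $Q_k$ it is comparable to $M_k$ up to errors that vanish with $\epsilon$; thus in the limit the left-hand side becomes the discrete weighted sum $\sum_k M_k^{p-2}\int_{Q_k}|f|^p\,d\mu$.

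Next I would split $f$ on each $Q_k$ into its $\mu$-average $m_k/\mu(Q_k)$ plus an oscillation, so that $\int_{Q_k}|f|^p\,d\mu$ is controlled by the main term $m_k^p/\mu(Q_k)^{p-1}$ plus a remainder. The main term is treated exactly as in Theorem~\ref{thm-cohe-str}: using positivity of $f$ and of a bump $\phi$ with $\int_{A_{r_1}(0)}\phi=2^{-(n+1)}$ and $\wh\phi$ supported in the unit ball, I write $m_k\le 2^{n+1}L^{n}\int_{Q_k^L}(\phi_L*fd\mu)\,dx$, apply H\"older on $Q_k^L$, and use $|Q_k^L|\le C\mu(Q_k)L^{\alpha-n}$ to turn the volume factor into powers of $\mu(Q_k)$ and $L$. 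Summing over the sets $Q_k^L$, which have bounded overlap, collapses the pieces into $\int_{E(r/L)}(\phi_L*fd\mu)^p$. Because $p\neq2$ rules out Plancherel, I would replace it by Hausdorff--Young: since $\widehat{\phi_L*fd\mu}=\wh\phi(\cdot/L)\,\wh{fd\mu}$ is supported in $|\xi|\le L$, one gets $\|\phi_L*fd\mu\|_{p'}^p\le\int_{B_L(0)}|\wh{fd\mu}|^p$, and then H\"older on the finite-measure set $E(r/L)$ (of measure $\lesssim L^{\alpha-n}$) yields $\int_{E(r/L)}(\phi_L*fd\mu)^p\le CL^{(\alpha-n)(2-p)}\int_{B_L(0)}|\wh{fd\mu}|^p\,d\xi$.

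The crux is the weighted summation, and this is where $1\le p\le2$ and the structure of $\mu(E_x)$ are essential. The oscillation remainder is benign for $p>1$: its weighted sum is dominated by $\sum_k M_k^{p-2}\mu(Q_k)\approx\int_0^{\mu(E)}t^{p-2}\,dt$, which converges precisely because $p-2>-1$, so approximating $f$ by a uniformly continuous $g$ kills it as $\epsilon\to0$. The main term, however, produces $\sum_k M_k^{p-2}\,(m_k^p/\mu(Q_k)^{p-1})$, and a termwise H\"older here leaves the divergent sum $\sum_k \mu(Q_k)/M_k\approx\int dM/M$, since $M_k\to0$ at the lower-left corner of $E$ --- exactly the logarithmic obstruction that the rearrangement step of \cite{Hudson} is designed to defeat. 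To avoid it I would not sum termwise but exploit that $\mu(E_x)$ is a genuine cumulative (distribution-function) variable: the distance sets $E_{x_k}(r/L)$ of the nested down-sets are increasing in $k$, so the cumulative integrals $F_k=\int_{E_{x_k}}f\,d\mu$ are controlled by $\phi_L*fd\mu$ on a \emph{nested} family, and an Abel summation against the weight $M_k^{p-2}$ telescopes. Conceptually this realizes $\sum_k M_k^{p-2}(m_k^p/\mu(Q_k)^{p-1})$ as a discrete Hardy sum in the variable $t=\mu(E_x)$, under which $\mu$ pushes forward to Lebesgue measure on $[0,\mu(E)]$; since $2-p\ge0$ makes $t\mapsto t^{p-2}$ nonincreasing, the Hardy--Littlewood rearrangement inequality and the one-dimensional Hardy inequality apply, giving the continuous analogue of $\sum|c_k|^p/k^{2-p}\le\sum|c_k^*|^p/k^{2-p}$ from (\ref{Hudsonzero}).

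Finally I would send $\epsilon\to0$ (whence $r_1/L\le\delta_0\to0$, so the coupling of $\epsilon$ and $L$ is harmless), collect the powers of $L$ from the second and third paragraphs against the normalization $1/L^{n-\alpha}$, and take $\liminf_{L\to\infty}$ to obtain (\ref{eqthmhud1p2}). I expect the genuine obstacle to be twofold. The first and principal difficulty is making the Hardy/rearrangement reduction rigorous in the $n$-dimensional partial order, where $\mu(E_x)$ is a multivariate distribution function rather than a one-dimensional cumulative sum, so the nesting of the down-sets $E_{x_k}$ and the attendant telescoping must be set up carefully to recover the sharp Hardy weight. The second is the endpoint $p=1$, where the weighted sum $\sum_k\mu(Q_k)/M_k$ genuinely diverges and the oscillation can no longer be discarded freely; there I anticipate needing the nested-set telescoping to do all the work, in the spirit of the separate $p=1$ treatment in \cite{Hudson}.
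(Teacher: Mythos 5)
Your reduction is sound as far as it goes: the cube decomposition, the average-plus-oscillation splitting, the positivity trick with $\phi$ and (\ref{thmp2}), H\"older plus bounded overlap, and even your Hausdorff--Young/H\"older computation all parallel the paper, and your power count $L^{\alpha-n}$ is correct for the \emph{unweighted} sum $\sum_k m_k^p/\mu(Q_k)^{p-1}$. The gap is exactly at the point you call the crux, and your proposed resolution does not close it. The estimate you need there is a bound of a \emph{weighted spatial} sum $\sum_k M_k^{p-2} m_k^p/\mu(Q_k)^{p-1}$ by the \emph{Fourier-side} quantity $L^{\alpha-n}\int_{B_L}|\wh{fd\mu}|^p d\xi$. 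The classical one-dimensional Hardy inequality and the Hardy--Littlewood rearrangement inequality compare spatial sums with spatial sums; neither can produce a Fourier transform on the right-hand side. In \cite{Hudson} the passage from $\sum(c_k^*)^p/k^{2-p}$ to $C\|u\|^p_{B^p.a.p}$ in (\ref{Hudsonzero}) is emphatically \emph{not} the classical Hardy inequality: it is proved by constructing, for the given configuration of atoms, a uniformly bounded test multiplier, and that construction is the core of their paper. So when you say the telescoped sum is handled by ``the continuous analogue of (\ref{Hudsonzero})'', you are assuming precisely the statement to be proved. Moreover, the one-dimensional reduction itself fails in $\R^n$: the push-forward of $\mu$ under $x\mapsto\mu(E_x)$ need not be Lebesgue measure on $[0,\mu(E)]$ when $n\geq2$, since incomparable points can have equal cumulative mass (two anti-diagonally placed pieces of $E$ each see only their own half), and then the left-hand weight is strictly larger than any one-dimensional Hardy sum obtained this way. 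You flagged both issues as ``difficulties'' but offered no mechanism to overcome them; they are not technicalities, they are the theorem.

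What the paper does instead has two ingredients, both absent from your proposal. First, a padding trick: adjoin to $E$ a disjoint self-similar Cantor-type set $C_\epsilon$ with $\CH_{\alpha}(C_\epsilon)=\epsilon^{\alpha}$, lying below $E$ in the coordinatewise order, and work with $E'=E\cup C_\epsilon$. This bounds every cumulative weight below by $\epsilon^{\alpha}$ (which, incidentally, also repairs your $p=1$ endpoint: the oscillation error then carries the harmless factor $\epsilon^{p-2}$ uniformly in $1\leq p\leq2$, with the cube scale $\epsilon_1\to0$ taken first at fixed $\epsilon$), and via its packing number $P_\delta=P(C_\epsilon,\delta)$ supplies a uniform floor $|E'_k(\delta)|\geq c\,P_\delta\delta^n$ for $\delta=r/L$. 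The Minkowski hypothesis converts the measure weights $(\mu(E'_k)+\epsilon)^{p-2}$ into Lebesgue weights $|E'_k(\delta)|^{p-2}$, and the cubes are grouped dyadically by $2^jP_\delta\delta^n\leq|E'_k(\delta)|<2^{j+1}P_\delta\delta^n$, the factor $\sum_j2^{j(p-2)}$ converging because $p<2$. Second, for each dyadic class one needs a weighted Fourier bound with constant independent of the number of cubes, of $\delta$ and of $L$: this is Lemma~\ref{lemHud1FT}, proved by an iterative construction of a bounded multiplier $F$ adapted to the disjoint cubes (the continuous analogue of the Hudson--Leckband construction), giving $\frac{\delta^{-n}}{P_\delta}\int_{S_j}|g_L|\leq C\int|\wh{g_L}|$; Stein's weighted interpolation (Theorem~\ref{thmSteinInterpolation}) against Plancherel then yields $(\delta^nP_\delta)^{p-2}\int_{S_j}|g_L|^p\leq C\int|\wh{g_L}|^p$ for $1<p<2$. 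It is this multiplier-plus-interpolation machinery -- not Hardy or rearrangement -- that defeats the logarithmic divergence you correctly identified; without it, or an equivalent substitute, your outline cannot be completed.
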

First we prove the following lemma:

\begin{lem}\label{lemHud1FT} Suppose $L>1$ and $0<\delta=r/L<1$ are given constants. Let $g_L\in
L^1(\R^n)$ and $S_{\delta}=\cup_{i=1}^{s} \Delta_i^{\delta}$ be
the union of disjoint cubes such that $0<|\Delta_i^{\delta}| <
\delta^n$. Then, there exists a non-zero finite constant $C_2$
independent of $g_L$, $s$, $\delta$ and $L$ such that
     \be
       \frac{\delta^{-n}}{P_{\delta}}\int_{S_{\delta}}|g_L(x)|dx \leq\ C_2\ \int_{\R^n}|\wh{g_L}(\xi)|d\xi, \label{eqlemHud1FT}
     \ee
where $P_{\delta}>1$ is a constant dependent on $\delta$.
\end{lem}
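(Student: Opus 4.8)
The plan is to reduce (\ref{eqlemHud1FT}) to the elementary sup-norm bound $\|g_L\|_{L^\infty(\R^n)}\le\|\wh{g_L}\|_{L^1(\R^n)}$, and then to exploit that each cube $\Delta_i^\delta$ has volume strictly less than $\delta^n$. First I would dispose of the trivial case: if $\wh{g_L}\notin L^1(\R^n)$ the right-hand side of (\ref{eqlemHud1FT}) is infinite and nothing is to be proved, so I may assume $\wh{g_L}\in L^1(\R^n)$. Then Fourier inversion gives, for almost every $x\in\R^n$,
\[
|g_L(x)|=\Big|\int_{\R^n}\wh{g_L}(\xi)\,e^{2\pi i x\cdot\xi}\,d\xi\Big|\le\int_{\R^n}|\wh{g_L}(\xi)|\,d\xi,
\]
so that $\|g_L\|_{L^\infty(\R^n)}\le\|\wh{g_L}\|_{L^1(\R^n)}$.

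Next I would integrate this pointwise bound over $S_\delta$, using the disjointness of the cubes and the volume control $|\Delta_i^\delta|<\delta^n$:
\[
\int_{S_\delta}|g_L(x)|\,dx\le|S_\delta|\,\|g_L\|_{L^\infty(\R^n)}=\Big(\sum_{i=1}^s|\Delta_i^\delta|\Big)\|g_L\|_{L^\infty(\R^n)}<s\,\delta^n\,\|\wh{g_L}\|_{L^1(\R^n)}.
\]
Multiplying by $\delta^{-n}$ and dividing by $P_\delta$ then gives
\[
\frac{\delta^{-n}}{P_\delta}\int_{S_\delta}|g_L(x)|\,dx<\frac{s}{P_\delta}\,\|\wh{g_L}\|_{L^1(\R^n)}.
\]
It remains only to absorb the factor $s/P_\delta$: taking $P_\delta$ to be the number $s$ of cubes (equivalently, up to a fixed multiplicative constant, the $\delta$-packing number of $S_\delta$, which is comparable to $s$ by \lemref{lemPM1}(1)) makes $s/P_\delta$ bounded by an absolute constant $C_2$, yielding (\ref{eqlemHud1FT}).

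There is no real analytic difficulty here; the single point demanding care is the bookkeeping of constants, since the conclusion asserts that $C_2$ is independent of $g_L$, $s$, $\delta$ and $L$. This is precisely the purpose of the normalising factor $P_\delta^{-1}$: as $\delta\to0$ the number of cubes $s$ typically blows up, and without dividing by a quantity comparable to $s$ the estimate could not be uniform. Thus the only structural choice to pin down is the identification of $P_\delta$ with (a quantity bounding) $s$ and the verification that $P_\delta>1$ in the regime of interest; the remainder is the inversion bound combined with the volume estimate $|S_\delta|<s\,\delta^n$.
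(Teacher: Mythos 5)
Your argument proves a different, essentially vacuous statement, because it changes the role of $P_\delta$. In \lemref{lemHud1FT} the constant $P_\delta$ is part of the given data --- it is ``a constant dependent on $\delta$'' only, and $C_2$ is required to be independent of $s$ --- whereas your proof sets $P_\delta$ equal to (a quantity comparable to) $s$, i.e.\ a quantity depending on $s$ and on the particular configuration of cubes. That is exactly what the quantifiers of the lemma exclude: if $P_\delta$ were allowed to grow with $s$, the assertion ``$C_2$ is independent of $s$'' would be empty. The distinction is not pedantic, because of how the lemma is used in the proof of Theorem~\ref{ThmHud1p2}: there $P_\delta=P(C_\epsilon,\delta)$ is the $\delta$-packing number of the auxiliary Cantor set $C_\epsilon$ (fixed before any decomposition into cubes is made, with $\CH_{\alpha}(C_\epsilon)=\epsilon^{\alpha}$, so $P_\delta\approx\epsilon^{\alpha}\delta^{-\alpha}$), while $S_j\subset E(\delta)$ is cut by a $\delta$-grid, so the number of pieces $s_j$ can be of size $c\,\mu(E)\,\delta^{-\alpha}$. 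Thus $s_j/P_\delta$ is only bounded by a quantity of order $\mu(E)\epsilon^{-\alpha}$, which blows up as $\epsilon\to0$; since the proof of Theorem~\ref{ThmHud1p2} sends $\epsilon\to0$ at the very end and needs constants independent of $\epsilon$, the trivial estimate $\frac{\delta^{-n}}{P_\delta}\int_{S_\delta}|g_L|\le\frac{s}{P_\delta}\,\|\wh{g_L}\|_1$ obtained from $\|g_L\|_\infty\le\|\wh{g_L}\|_1$ is useless there. (Your side remark that $s$ is comparable to the $\delta$-packing number of $S_\delta$ via \lemref{lemPM1} also fails in general: the hypothesis $0<|\Delta_i^{\delta}|<\delta^n$ permits arbitrarily small cubes clustered in a single ball of radius $\delta$, so $s$ can be arbitrarily large while $P(S_\delta,\delta)=1$.)

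The genuine content of the lemma is precisely the uniformity in $s$, and this is what the paper's proof is built to achieve, by a Riesz-product type construction in the spirit of Hudson--Leckband rather than by the inversion bound. One chooses $f_i\in L^2(\R^n)$ with $\wh{f_i}$ supported on $\Delta_i^{\delta}$, of constant modulus $\delta^{-n}/P_\delta$ there, and with phase chosen so that $\wh{f_i}\,g_L\ge0$; the iteration $F_i=\frac{4}{5}F_{i-1}\exp\big(-\tfrac{1}{4s^2}|f_i|\big)+\frac{1}{20}f_i$ then produces a single multiplier $F=F_s$ satisfying $\|F\|_\infty\le\frac{1}{4}$ \emph{uniformly in $s$}, while on each $\Delta_{i_0}^{\delta}$ the transform $\wh{F}$ still dominates the main term $\wh{f_{i_0}}g_L$ up to errors controlled by $\sum_{k}\|f_k\|_2\cdot\frac{1}{4s^2}\sum_{l>k}\|f_l\|_2$, which is small relative to $\delta^{-n}/P_\delta$. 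Parseval and $\|F\|_\infty\le\frac14$ then give $\frac{\delta^{-n}}{P_\delta}\int_{S_\delta}|g_L|\le C_2\,\mathrm{Re}\int\wh{F}g_L=C_2\int F\,\wh{g_L}\le C_2'\,\|\wh{g_L}\|_1$ with constants independent of $s$, $\delta$ and $L$. This construction --- a bounded ``dual'' function whose Fourier transform carries the prescribed phases on all $s$ cubes simultaneously at cost $O(1)$ instead of $O(s)$ --- is the missing idea in your proposal.
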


\begin{proof}
For all $i=1,...s$, construct $f_{i}\in L^2(\R^n)$ such that
  \Bea
    |\wh{f_{i}}(x)| &=&  \frac{\delta^{-n}}{P_{\delta}}\ \text{for}\ x\in \Delta_i^{\delta}\\
                     &=& 0\ \text{for}\ x\notin \Delta_i^{\delta}\\
    \wh{f_{i}}(x)g_L(x) &\geq& 0.
  \Eea
Since $|\Delta_i^{\delta}|\leq\delta^n$ and $P_{\delta}>1$, $\|\wh{f_{i}}\|_1\leq 1$ and hence for all $\xi$, $|f_{i}(\xi)|\leq 1$. Denote
$F_{0}\equiv0$. For all $i=1,...,s$, let
$$F_{i}(\xi) = \frac{4}{5}F_{i-1}(\xi)\exp(\frac{-1}{4s^2}|f_{i}(\xi)|) + \frac{f_{i}(\xi)}{20}$$
and denote $F\equiv F_{s}$. Since $|f_{i}(\xi)|\leq 1 $ for all
$i$, we have $|F_{1}(\xi)|\leq 1/4$. Note that for all $0\leq
t\leq 1$ and $s\geq 1$,
  \Bea
  \frac{4}{5}exp(\frac{-t}{4s^2})&\leq& 1-\frac{t}{5}\\
  \frac{1}{5}exp(\frac{-t}{4s^2})+\frac{t}{20}&\leq& \frac{1}{4}.
  \Eea
Since for all $\xi$, $|f_2(\xi)|\leq 1$, we have
   \bee
    |F_{2}(\xi)|\leq \frac{1}{5}exp(\frac{-|f_{2}(\xi)|}{4s^2}) + \frac{|f_{2}(\xi)|}{20} \leq \frac{1}{4}.
   \eee
Then by induction $\|F\|_{\infty}\leq 1/4$. By
construction, we have
   \be
   F(\xi)= \sum_{k=1}^{s-1} \bigg[ \frac{4^{s-k}f_{k}(\xi)}{5^{s-k}20}\exp\big(\frac{-1}{4s^2} \sum_{l=k+1}^{s}|f_{l}(\xi)|\big)\bigg] + \frac{f_{s}(\xi)}{20}.\label{eqlemFTredFj}
   \ee
Now consider $\wh{F}$,
   \Bea
    \wh{F}(x) &=& \sum_{k=1}^{s-1} \bigg[\frac{4^{s-k}f_{k}(\xi)}{5^{s-k}20} \exp\big(\frac{-1}{4s^2} \sum_{l=k+1}^{s}|f_{l}(\xi)|\big)\bigg]\wh{\ }(x) +
    \frac{\wh{f_{s}}(x)}{20}\\
    &=&\sum_{k=1}^{s-1} \bigg[\frac{4^{s-k}f_{k}(\xi)}{5^{s-k}20} \big(\exp(\frac{-1}{4s^2} \sum_{l=k+1}^{s}|f_{l}(\xi)|)-1\big)\bigg]\wh{\ }(x)\\
     &&\ + \sum_{k=1}^{s} \frac{4^{s-k}\wh{f_{k}}(x)}{5^{s-k}20}.
   \Eea
By the construction of $f_{i_0}'s$, for all $x\in
\Delta_{i_0}^{\delta}$, $|\wh{f_{i}}(x)|=0$ for all $i\neq i_0$ and
$\wh{f_{i}}(x)g_L(x)\geq 0$. Hence
   \Bea
   &&Re(\wh{F}(x)g_L(x))\\
   &&\leq \sum_{k=1}^{s-1} \bigg|\bigg[\frac{4^{s-k}f_{k}(\xi)}{5^{s-k}20} \big(\exp(\frac{-1}{4s^2} \sum_{l=k+1}^{s}|f_{l}(\xi)|)-1\big)\bigg]\wh{\ }(x)\bigg||g_L(x)|\\
   &&\ \ +\ \frac{4^{s-i_0}}{5^{s-i_0}20} \wh{f_{i_0}}(x)g_L(x) \\
   &&\leq \sum_{k=1}^{s-1} \frac{\|f_{k}\|_2}{20} \bigg{\|}\big(\exp(\frac{-1}{4s^2} \sum_{l=k+1}^{s}|f_{l}(\xi)|)-1\big)\wh{\ }\bigg{\|}_2|g_L(x)|\\
   &&\ \ +\ \frac{1}{20} \wh{f_{i_0}}(x)g_L(x).
   \Eea
That is, for $x\in \Delta_{i_0}^{\delta}$,
   \bea
   \nonumber &&Re(20\wh{F}(x)g_L(x))- \wh{f_{i_0}}(x)g_L(x)\\
    &&\ \ \leq  \sum_{k=1}^{s-1} \|f_{k}\|_2 \bigg{\|}\big(\exp(\frac{-1}{4s^2} \sum_{l=k+1}^{s}|f_{l}(\xi)|)-1\big)\wh{\ }\bigg{\|}_2|g_L(x)|. \label{eqlemFTredFjRe}
   \eea
Since for all $a>0$, $\bigg|\frac{\exp(-a)+1}{a}\bigg|\leq 1$ and for all $i$, $\|f_{i}\|_2\leq\delta^{-n/2}$ we have
   \Bea
   \sum_{k=1}^{s-1} \|f_{k}\|_2 \bigg{\|}\big(\exp(\frac{-1}{4s^2} \sum_{l=k+1}^{s}|f_{l}(\xi)|)-1\big)\wh{\ }\bigg{\|}_2
   &\leq& \sum_{k=1}^{s-1} \|f_{k}\|_2 \bigg(\sum_{l=k+1}^{s}\frac{\|f_{l}\|_2}{4s^2}\bigg)\\
   &\leq& \frac{\delta^{-n}}{8}.
   \Eea
Thus from (\ref{eqlemFTredFjRe}), for $x\in \Delta_{i_0}^{\delta}$
   \Bea
    &&\delta^{-n}|g_L(x)|=|\wh{f_{i_0}}(x)g_L(x)|\\
    &\leq& |\wh{f_{i_0}}(x)g_L(x) - Re(20\wh{F}(x)g_L(x))| + Re(20\wh{F}(x)g_L(x))\\
    &\leq& \frac{\delta^{-n}}{8}|g_L(x)| + Re(20\wh{F}(x)g_L(x)).
    \Eea
Thus for all $i$ and $x\in \Delta_{i_0}^{\delta}$, $0\leq\delta^{-n}|g_L(x)|\leq 40
Re(\wh{F}(x)g_L(x))$. Hence, for all $x$, $0\leq\delta^{-n}|g_L(x)|\leq 40
Re(\wh{F}(x)g_L(x))$ and
    \bea
   \nonumber \int_{S_{\delta}} \delta^{-n}|g_L(x)| dx &\leq&
   40 Re\bigg(\int_{\R^n}\wh{F}(x)g_L(x)dx\bigg)\\
   \nonumber &\leq& 40\int_{\R^n}|F(\xi)||\wh{g_L}(\xi)| d\xi,
  \eea
Also we have $\|F\|_{\infty}\leq 1/4$. Then,
  \bee
    \int_{S_{\delta}} \delta^{-n}|g_L(x)| dx \leq C_2 \int_{\R^n}|\wh{g_L}(\xi)|
    d\xi.
  \eee
Hence the proof.
\end{proof}

\textbf{Proof of Theorem \ref{ThmHud1p2}:}\\

Since $E$ is a bounded set, without loss of generality we assume
that $\tilde{m}>1$ is the smallest integer such that for all
$x=(x_1,...x_n)\in E$, $1\leq x_j\leq \tilde{m}$, $j=1,...n$. Fix
$0<\epsilon<1$ and $m=\tilde{m}+1$. Then $E(\epsilon)$, the
$\epsilon$-distance set of $E$ is contained in
$M=(0,m)\times...(0,m)$.\\

As in Theorem \ref{thm-cohe-str}, we approximate $fd\mu$ with a Schwartz function on a fine decomposition of $E(r/L)$, $r/L$-distance set of $E$ for very small $r/L$ depending on $\epsilon$. First, we consider a self-similar Cantor type fractal $\tilde{C}_{\epsilon}$ as in the proof of Theorem \ref{thmHudson1cohequasi} in \cite{Hudson} such that $C_{\epsilon}$ has small $\alpha$-Hausdorff measure and set $C_{\epsilon}$ the $n$-times cartesian product of $\tilde{C}_{\epsilon}$.
$$$$
Construct a self-similar Cantor-type set $C$ in
$[-2/\epsilon,-1/\epsilon]\times...[-2/\epsilon,-1/\epsilon]\subset\R^n$
satisfying open set condition with dilation factor $0<\eta<1$ such
that $N\eta^{\alpha}=1$ and $\CH_{\alpha}(C)=1$. (See Definition
\ref{defnSelfsimilar}.) Let $C_{\epsilon}$ denote the
$\epsilon$-dilated $C$ such that $C_\epsilon\subset
[-2,-1]\times..\times[-2,-1]=M_1$ and $\CH_{\alpha}(C_{\epsilon})
= \epsilon^{\alpha}\CH_{\alpha}(C) = \epsilon^{\alpha}$.
Denote $E'=E\cup C_{\epsilon}$. Thus for all $x\in E$, $\mu(E'_x)=\mu(E_x)+\CH_{\alpha}(C_\epsilon)$ and also $E'$ is such that the upper Minkowski content of $E'_x$ is nonzero and bounded above by their Hausdorff measure. Hence
  \bea
   \nonumber \int_E\frac{|f(x)|^p}{\mu(E_x)^{2-p}}d\mu(x) &=&
   \underset{\epsilon\rightarrow0}{\lim}
   \int_E\frac{|f(x)|^p}{(\mu(E_x) + (\eta^{-1}\epsilon)^{\alpha} + \epsilon)^{2-p}}d\mu(x)\\
   &\leq& \underset{\epsilon\rightarrow0}{\lim}
   \int_E\frac{|f(x)|^p}{(\mu(E'_x) + \epsilon)^{2-p}}d\mu(x) \label{eqlemAuxReductionepsilon}
  \eea
Now to approximate $fd\mu$ with a Schwartz function, we proceed as in the Theorem \ref{thm-cohe-str}.\\

Fix $\epsilon_1<\epsilon/2$. For each $k=(k_1,...k_n)$, ($0<k_j\in\Z$) denote $Q_k=\{x=(x_1,...x_n)\in M: (k_j-1)\epsilon_1<x_j \leq k_j\epsilon_1\}$. Let $\CQ_0$ denote the
finite collection of all such cubes whose intersection with $E$ that has non zero measure, that is, $\mu(Q_k)\neq0$. For every
$k=(k_1,...k_n)$, denote $x_k=((k_1-1)\epsilon_1,...(k_n-1)\epsilon_1)$, $E_{k}=E_{x_k}= E\cap \prod_{j=1}^n
(-\infty,(k_j-1)\epsilon_1]$ and $E'_{k}= E'_{x_k} = E'\cap \prod_{j=1}^n (-\infty,(k_j-1)\epsilon_1]$. Then for all $Q_k\in\CQ_0$ and $x\in Q_k$, $\mu(E'_k)\leq \mu(E'_x)$. Also for all $k$,
$\mu(E'_k)=\mu(E_k)+\CH_{\alpha}(C_{\epsilon})>0$. Since $E$ is
compact, $\CQ_0$ has finite disjoint collection of half open cubes.
Hence
       \bea
          \nonumber \int_E\frac{|f(x)|^p}{(\mu(E'_x)+ \epsilon)^{2-p}}d\mu(x) &=& \sum_{Q_k\in\CQ_0}\int_{Q_k}
          \frac{|f(x)|^p}{(\mu(E'_x)+ \epsilon)^{2-p}}d\mu(x)\\
          \nonumber &\leq& \sum_{Q_k\in\CQ_0}\int_{Q_k}
          \frac{|f(x)|^p}{(\mu(E'_k)+ \epsilon)^{2-p}}d\mu(x)\\
          \nonumber &\leq& \frac{C_p}{(\epsilon)^{2-p}} \sum_{Q_k\in\CQ_0} \int_{Q_k}\Bigg|f(x)- \frac{1}{(\mu(Q_k))^{p}} \int_{Q_k} f(y) d\mu(y)\bigg|^p d\mu(x)\\
          && + \sum_{Q_k\in\CQ_0} \frac{\mu(Q_k)^{1-p}}{(\mu(E'_k)+ \epsilon)^{2-p}}\bigg|\int_{Q_k}
          f(y) d\mu(y)\bigg|^p.\label{thmpRed1}
          \eea
Let $i_{\epsilon_1}=\inf_{Q\in\CQ_0}\mu(Q)$. Since infimum is taken
over cubes in $\CQ_0$, which is a finite collection and $\mu(Q)\neq
0$, we have $i_{\epsilon_1}>0$. Now, for
each $k$, there exists $\delta_k$ such that
   \bea
    \nonumber |(Q_k\cap E)(\delta)|\delta^{\alpha-n} &\leq& C_n\mu(Q_k\cap E) + C_ni_{\epsilon_1}\epsilon,\\
    &\leq& 2C_n\mu(Q_k\cap E)\ (\text{since}\ \epsilon<1),\label{thmp1}\\
   |E'_k(\delta)|\delta^{\alpha-n}&\leq& C_n\mu(E'_k) + C_n\epsilon, \label{thmp1coh}
    \eea
for all $\delta \leq\delta_k$. Let $\tilde{\delta}_1\leq \min_k\{\delta_k\}$.\\

Let $\phi$ be a positive Schwartz function such that
$\wh{\phi}(0)=1$, support of $\wh{\phi}$ is supported in the unit
ball and there exists $r_1>0$ such that
\be
\int_{A_{r_1}(0)}
\phi(x) dx = 1/2^{n+1},\label{thmp2}
\ee
where $A_{r_1}(0)=\{x=(x_1,...x_n)\in\R^n: -r_1<x_j\leq 0,\ \forall j\}$. Denote $\phi_L(x)=\phi(Lx)$ for all $L>0$. Fix $\delta_0\leq
\min\{\epsilon, \tilde{\delta}_1\}$, $r=n^{\frac{1}{2}}r_1$ and $L$ large such that
$r/L\leq \delta_0$. Then we have,
      \Bea
      \bigg|\int_{Q_k} f(y) d\mu(y)\bigg|^p &=& 2^{p(n+1)} \bigg|\int_{Q_k} \int_{A_{r_1}(0)} \phi(x) dx f(y) d\mu(y)\bigg|^p\\
      &=& 2^{p(n+1)}L^{np} \bigg|\int_{Q_k} \int_{A_{r_1/L}(y)} \phi_L(x-y) dx f(y) d\mu(y)\bigg|^p\\
      &=& 2^{p(n+1)}L^{np} \bigg|\int_{Q_kE_L} \int_{Q_k} \phi_L(x-y)f(y)
      d\mu(y)dx\bigg|^p.
      \Eea
where $Q_kE_L=\{x=(x_1,...x_n)\in M:\ \exists y=(y_1,...y_n)\in E,\ \text{such} \ \text{that}\ y_j-r_1/L< x_j\leq y_j\ \forall\ j\}$. Note that $|Q_kE_L|\leq |(Q_k\cap E)(r/L)|$, where $(Q_k\cap E)(r/L)$ denotes the $r/L$-distance set of $Q_k\cap E$ (since $r=n^{1/2}r_1$). Since $\phi$ and $f$ are positive, $\int_{Q_kE_L} \int_{Q_k} \phi_L(x-y)f(y)d\mu(y)dx \leq \int_{Q_kE_L}\phi_L*fd\mu(x)dx$. Thus
      \Bea
      &&\bigg|\int_{Q_k} f(y) d\mu(y)\bigg|^p\\
      &&\ \leq 2^{p(n+1)} L^{np} \bigg|\int_{Q_kE_L} \phi_L*fd\mu(x)dx\bigg|^p\\
      &&\ \leq 2^{p(n+1)}L^{np}(|Q_kE_L|)^{p-1} \int_{Q_kE_L} |\phi_L*fd\mu(x)|^pdx\\
      &&\ \leq 2^{p(n+1)}r^{(n-\alpha)(p-1)} L^{n+\alpha(p-1)}(|(Q_k\cap E)(r/L)|(r/L)^{(\alpha-n)(p-1)} \int_{Q_kE_L}
      |\phi_L*fd\mu(x)|^pdx.
      \Eea
By (\ref{thmp1}), there exists a constant $\tilde{C}$ independent of $f$, $\epsilon$, and $L$ such that
      \be
       \frac{1}{\mu(Q_k)^{p-1}}\bigg|\int_{Q_k} f(y) d\mu(y)\bigg|^p \leq \tilde{C}L^{n+\alpha(p-1)} \int_{Q_kE_L}
       |\phi_L*fd\mu(x)|^p dx. \label{thmpRed3}
      \ee
Let
\be
e_{\epsilon_1}= \sum_{Q_k\in\CQ_0}e_k = \sum_{Q_k\in\CQ_0} \int_{Q_k}\Bigg|f(x)- \frac{1}{(\mu(Q_k))^{p}} \int_{Q_k} f(y) d\mu(y)\bigg|^p d\mu(x).\label{eqerror}
\ee
Then from (\ref{thmpRed1}), (\ref{thmp1coh}) and (\ref{thmpRed3}), there exists a constant $\tilde{C_1}$ independent of $f$, $\epsilon$ and $L$ such that
    \bea
     \nonumber &&\int_E\frac{|f(x)|^p}{(\mu(E'_x)+\epsilon)^{2-p}} d\mu(x)\\
     \nonumber &&\ \ \leq C_p\epsilon^{p-2}e_{\epsilon_1} +  C_p\tilde{C}L^{n+\alpha(p-1)} \sum_{Q_k\in\CQ_0} \int_{Q_kE_L}
       \frac{|\phi_L*fd\mu(x)|^p}{(\mu(E'_k)+ \epsilon)^{2-p}}
       dx\\
     &&\ \ \leq C_p\epsilon^{p-2}e_{\epsilon_1} +  \tilde{C_1} L^{n(p-1)+\alpha} \sum_{Q_k\in\CQ_0} \int_{Q_kE_L}
       \frac{|\phi_L*fd\mu(x)|^p}{(|E'_k(r/L)|)^{2-p}}
       dx,\label{thmpRed2}
     \eea

For given $\epsilon_1$, let $g\in C_c^{\infty}(d\mu)$ be such that $\|f-g\|^p_{L^p(d\mu)}<\epsilon_1$. Then, as in the proof of Theorem \ref{thm-cohe-str}, we have
\Bea
 e_{\epsilon_1} &\leq& 2C_p^2\sum_{k}\int_{Q_k}|f(x)-g(x)|^pd\mu(x)\\
&&+C_p\sum_{k}\int_{Q_k}\bigg|g(x)-\frac{1}{\mu(Q_k)}\int_{Q_k} g(y)d\mu(y)\bigg|^pd\mu(x).
\Eea
Since $E=\cup_k(Q_k\cap E)$,
\bea
\nonumber e_{\epsilon_1}&\leq& 2C_p^2\|f-g\|^p_{L^p(d\mu)} +C_p\sum_{k}\int_{Q_k}\bigg|g(x)-\frac{1}{\mu(Q_k)}\int_{Q_k} g(y)d\mu(y)\bigg|^pd\mu(x)\\
&\leq& 2C_p\epsilon_1 +2\sum_{k}\int_{Q_k}\bigg|g(x)-\frac{1}{\mu(Q_k)}\int_{Q_k} g(y)d\mu(y)\bigg|^2d\mu(x).\label{Maxop1}
\eea
Since $g$ is compactly supported continuous function, $g$ is uniformly continuous and $$|g(x)-\frac{1}{\mu(Q_k)}\int_{Q_k} g(y)d\mu(y)|\rightarrow
0$$ uniformly in $x$ and $Q_k$ as $\mu(Q_k)\rightarrow 0$. As $\epsilon_1\rightarrow0$, we have $\mu(Q_k)\rightarrow 0$ for all $k$. Hence
\Bea
&&\sum_{k}\int_{Q_k}\bigg|g(x)-\frac{1}{\mu(Q_k)}\int_{Q_k} g(y)d\mu(y)\bigg|^pd\mu(x)\\
&\leq& \sum_{k}\mu(Q_k)\underset{Q_k\in\CQ_0}{\sup}\underset{x\in Q_k}{\sup}\bigg|g(x)-\frac{1}{\mu(Q_k)}\int_{Q_k} g(y)d\mu(y)\bigg|^p\\
&=& \mu(E)\underset{Q_k\in\CQ_0}{\sup}\underset{x\in Q_k}{\sup}\bigg|g(x)-\frac{1}{\mu(Q_k)}\int_{Q_k} g(y)d\mu(y)\bigg|^p,
\Eea
which goes to zero as $\epsilon_1$ goes to zero. Therefore,  from (\ref{Maxop1}), $e_{\epsilon_1}$ goes to zero as $\epsilon_1$ goes to zero.
$$$$
Since $r/L<\epsilon$, for each $k=(k_1,...k_n)$, $Q_kE_L$
intersects with at most $2^n-1$ other cubes $Q_{m}\cap E(r/L)$,
where $m=(m_1,...,m_n)$, $k_j-1\leq m_j\leq k_j$. Hence for each
$k$, $Q_kE_L$ is the union of $Q_k\cap E(r/L)$ and at most $2^n-1$
other sets $Q_m\cap E(r/L)$. Then for all such $m$,
$|E'_m(r/L)|\leq |E'_k(r/L)|$. Thus for each $k$, $$\int_{Q_k\cap
E(r/L)}\frac{|\phi_L*fd\mu(x)|^p}{|E'_k(r/L)|^{2-p}}dx$$ repeats
at most $2^n$ times. Let $\tilde{\CQ_0}$ denote the collection of
all $Q_k=\{x=(x_1,...x_n)\in M: (k_j-1)\epsilon_1<x_j \leq
k_j\epsilon_1\}$ where $k=(k_1,...k_n)$, ($0<k_j\in\Z$) such that
$|Q_k\cap E(r/L)|\neq 0$. Thus from (\ref{thmpRed2}) and
(\ref{thmpRed1}), there exists a constant $C_0$ independent of
$f,\epsilon$ and $L$ such that for all $r/L\leq \delta_0$,
  \bea
 \nonumber &&\int_E\frac{|f(x)|^p}{(\mu(E'_x)+ \epsilon)^{2-p}}d\mu(x)\\
 && \leq C_p\epsilon^{p-2}e_{\epsilon_1} + C_0L^{n(p-1)+\alpha} \sum_{Q_k\in\tilde{\CQ_0}} \int_{Q_k\cap E(r/L)}      \frac{|\phi_L*fd\mu(x)|^p}{|E'_k(r/L)|^{2-p}} dx,\label{eqthmpackfin1}
 \eea
where $e_{\epsilon_1}$ goes to zero as $\epsilon_1$ goes to zero.\\

Denote $\delta=r/L$. By the construction of $C_{\epsilon}$, for all $k$, $|C_{\epsilon}(\delta)|< |E'_k(\delta)|$. Also, by Lemma \ref{lemPM1}, $|C_{\epsilon}(\delta)|\geq C_n P(C_{\epsilon},\delta)\delta^n$. Denote $P_{\delta}= P(C_{\epsilon},\delta)>1$, the $\delta$-packing number of $C_{\epsilon}$. For $j=0,1,...J$, let
$\CS_j$ be the sub-collection of all $Q_k\in\tilde{\CQ_0}$ such that
$2^jP_{\delta}\delta^n\leq |E'_k(\delta)|<2^{j+1}P_{\delta}\delta^n$. We consider only
nonempty collections. Denote $g_L(x)=\phi_L*fd\mu(x)$. Then \Bea
 \sum_{Q_k\in\tilde{\CQ_0}} \int_{Q_k\cap E(\delta)}
       \frac{|g_L(x)|^p}{|E'_k(\delta)|^{2-p}} &=& \sum_j \sum_{Q_k\in\CS_j} \int_{Q_k\cap E(\delta)}
       \frac{|g_L(x)|^p}{|E'_k(\delta)|^{2-p}}\\
       &\leq& \sum_j (2^{j}P_{\delta}\delta^{n})^{p-2}\sum_{Q_k\in\CS_j} \int_{Q_k\cap E(\delta)}
       |g_L(x)|^pdx.
 \Eea
For each $j$, we can write $\cup_{Q_k\in\CS_j}Q_k\cap E(\delta) = S_j =
\cup_{i=1}^{s_j}\Delta_i^{\delta}$ as the finite disjoint union of
non-empty sets intersected with cubes of volume $\delta^n$, that
is, $0<|\Delta_i^{\delta}|\leq \delta^n$. Then \be
 \sum_{Q_k\in\tilde{CQ_0}} \int_{Q_k\cap E(\delta)}
       \frac{|g_L(x)|^p}{|E'_k(\delta)|^{2-p}} \leq \sum_j (2^{j}P_{\delta}\delta^{n})^{p-2}\int_{S_j}
       |g_L(x)|^pdx \label{eqthmpackfin2}
 \ee

 For every $j$, applying Lemma \ref{lemHud1FT}, we have \be
\frac{\delta^{-n}}{P_{\delta}} \int_{S_j}
       |g_L(x)|dx \leq C \int_{\R^n}
       |\widehat{g_L}(\xi)|d\xi. \label{eqthmpackfin3}
 \ee

We recall the following interpolation theorem due to Stein (See page 213 in \cite{BennettSharpley} for the proof):
\begin{thm}\label{thmSteinInterpolation} Let $(\CR,\mu)$ and
$(\CS, \nu)$ be totally $\sigma$-finite measure spaces and let $T$
be a linear operator defined on the $\mu$-simple functions on
$\CR$ taking values in the $\nu$-measurable functions on $\CS$.
Suppose that $u_i, v_i$ are positive weights on $\CR$ and $\CS$
respectively, and that $1\leq p_i,q_i\leq\infty$, $(i=0,1)$.
Suppose
  \bee
    \|(Tf)v_i\|_{q_i}\leq M_i\|fu_i\|_{p_i},\ \ (i=0,1)
  \eee
for all $\mu$-simple functions $f$. Let $0\leq\theta\leq1$ and
define
  \bee
  \frac{1}{p}=\frac{1-\theta}{p_0}+\frac{\theta}{p_1},\  \
  \frac{1}{q}=\frac{1-\theta}{q_0}+\frac{\theta}{q_1}
  \eee
and
  \bee
    u=u_0^{1-\theta}u_1^{\theta},\ \ v=v_0^{1-\theta}v_1^{\theta}.
  \eee
Then, if $p<\infty$, the operator $T$ has a unique extension to a
bound linear operator from $L^p_u$ into $L^q_v$ which satisfies
  \bee
    \|(Tf)v\|_q\leq\ M_0^{1-\theta}M_1^{\theta}\|fu\|_p,
  \eee
for all $f\in L^p_u$
\end{thm}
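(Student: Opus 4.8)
The plan is to prove this by the complex-interpolation (Thorin--Stein) method, treating the weights $u_i,v_i$ as part of an analytic family of functions rather than absorbing them into the operator. Since $p<\infty$, the $\mu$-simple functions are dense in $L^p_u$, so it suffices to establish $\|(Tf)v\|_q \le M_0^{1-\theta}M_1^{\theta}\|fu\|_p$ for $\mu$-simple $f$ and then extend $T$ by continuity; uniqueness of the extension is then automatic. By homogeneity I normalise $\|fu\|_p=1$, and by duality (using $1/q+1/q'=1$, with the convention that $L^{q'}_{v^{-1}}$ norms $L^q_v$, via $L^1$--$L^{\infty}$ duality when $q=\infty$) it is enough to bound $|\langle Tf,g\rangle_\nu|=|\int_{\CS}(Tf)g\,d\nu|$ for $\nu$-simple $g$ with $\|g v^{-1}\|_{q'}=1$.

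Next I would build two analytic families on the closed strip $0\le\mathrm{Re}\,z\le1$. Writing $P(z)=p\big(\tfrac{1-z}{p_0}+\tfrac{z}{p_1}\big)$, so that $P(\theta)=1$, set
$$f_z=\operatorname{sgn}(f)\,(|f|\,u)^{P(z)}\,u_0^{-(1-z)}u_1^{-z}.$$
The weight exponents are chosen precisely so that $f_\theta=f$ (since $u=u_0^{1-\theta}u_1^{\theta}$ makes the weight factor collapse to $1$), while on $\mathrm{Re}\,z=0$ one computes $|f_z|^{p_0}u_0^{p_0}=(|f|u)^{p}$, whence $\|f_z u_0\|_{p_0}=\|fu\|_p=1$, and likewise $\|f_z u_1\|_{p_1}=1$ on $\mathrm{Re}\,z=1$. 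A symmetric construction with $Q(z)=q'\big(\tfrac{1-z}{q_0'}+\tfrac{z}{q_1'}\big)$, where $q_i'$ is dual to $q_i$, produces $g_z=\operatorname{sgn}(g)\,(|g|\,v^{-1})^{Q(z)}\,v_0^{1-z}v_1^{z}$ with $g_\theta=g$, $\|g_z v_0^{-1}\|_{q_0'}=1$ on $\mathrm{Re}\,z=0$, and $\|g_z v_1^{-1}\|_{q_1'}=1$ on $\mathrm{Re}\,z=1$.

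Then I would form $\Phi(z)=\int_{\CS}(Tf_z)\,g_z\,d\nu$. Because $f$ and $g$ are simple, $f_z$ and $g_z$ are finite linear combinations of fixed sets with coefficients that are entire (indeed exponential) in $z$; by linearity of $T$ on simple functions $Tf_z$ is of the same form, so $\Phi$ is entire and bounded on the strip. On $\mathrm{Re}\,z=0$, H\"older's inequality and the hypothesis give
$$|\Phi(it)|\le\|(Tf_{it})v_0\|_{q_0}\,\|g_{it}v_0^{-1}\|_{q_0'}\le M_0\,\|f_{it}u_0\|_{p_0}\cdot 1=M_0,$$
and similarly $|\Phi(1+it)|\le M_1$. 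The three-lines lemma (Hadamard / Phragm\'en--Lindel\"of) then yields $|\Phi(\theta)|\le M_0^{1-\theta}M_1^{\theta}$, i.e. $|\langle Tf,g\rangle_\nu|\le M_0^{1-\theta}M_1^{\theta}$. Taking the supremum over admissible $g$ gives $\|(Tf)v\|_q\le M_0^{1-\theta}M_1^{\theta}$ for normalised simple $f$, and the density extension completes the proof.

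I expect the main obstacle to be the bookkeeping that certifies $\Phi$ is genuinely analytic and bounded on the entire strip, so that the three-lines lemma applies: for simple $f,g$ the integrand is dominated by $(|Tf_z|v_0)(|g_z|v_0^{-1})$, which has finite and uniformly bounded integral by H\"older, but this is exactly the step where the $\sigma$-finiteness and the restriction to simple $f$ are used, and where the weight powers in $f_z,g_z$ must be placed so that all three constraints ($f_\theta=f$ together with unit norm on each boundary line) hold at once. The endpoint exponents ($p_i$ or $q_i$ equal to $1$ or $\infty$, in particular the dualisation when $q=\infty$) require separate but routine attention, while the excluded case $p=\infty$ is precisely what would break the density argument, which is why the hypothesis $p<\infty$ is imposed.
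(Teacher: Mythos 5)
First, a point of comparison: the paper does not prove this statement at all --- it is quoted as a known result (the Stein--Weiss interpolation theorem with change of measures), with the proof explicitly delegated to page 213 of \cite{BennettSharpley}. So your proposal can only be measured against the standard textbook argument. Its skeleton --- normalisation, duality, an analytic family built from weight-corrected powers, and the three-lines lemma --- is indeed the right one, and your exponent bookkeeping is correct: $f_\theta=f$, $g_\theta=g$, and the boundary identities $\|f_{it}u_0\|_{p_0}=\|f_{1+it}u_1\|_{p_1}=1$ (likewise for $g_z$) all check out.

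However, the step you set aside as ``bookkeeping'' is a genuine gap, and it is exactly the point where the weighted theorem is harder than Riesz--Thorin. Your claim that $f_z$ and $g_z$ are ``finite linear combinations of fixed sets with coefficients that are entire in $z$'' is false for general weights: on a set $A_j$ where $f=a_j$ one has
\bee
 f_z=\mathrm{sgn}(a_j)\,|a_j|^{P(z)}\,u(x)^{P(z)}\,u_0(x)^{-(1-z)}\,u_1(x)^{-z},
\eee
and the factor $u^{P(z)}u_0^{-(1-z)}u_1^{-z}$ varies jointly in $x$ and $z$; it is constant in $x$ only if the weights are themselves simple on $\mathrm{supp}\,f$. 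Consequently $f_z$ is \emph{not} a $\mu$-simple function, so $Tf_z$ is undefined --- $T$ is given only on simple functions --- and the analyticity and boundedness of $\Phi$ on the strip, which the three-lines lemma requires, are never established. Nor can this be repaired simply by first extending $T$ to the endpoint spaces by density: for interior $z$ with $s=\mathrm{Re}\,z\in(0,1)$ one computes $|f_z|u_0=(|f|u)^{\mathrm{Re}\,P(z)}(u_0/u_1)^{s}$, and nothing in the hypotheses controls the ratio $(u_0/u_1)^{s}$, so $f_z$ need not belong to $L^{p_0}_{u_0}+L^{p_1}_{u_1}$ at all; moreover, if one of $p_0,p_1$ is infinite, simple functions are not even dense in that endpoint space. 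In Riesz--Thorin (no weights) your construction does produce simple $f_z$ with entire coefficients, which is why the transplanted argument feels routine; closing the gap in the weighted case requires real additional input, e.g.\ first treating weights bounded above and below on the finite-measure support of $f$ (so that $w^z=e^{z\log w}$ expands in a norm-convergent power series, the extended operator applies, and $z\mapsto Tf_z$ is a vector-valued analytic function) followed by a limiting argument to remove the truncation, or invoking Calder\'on's identification of $[L^{p_0}_{u_0},L^{p_1}_{u_1}]_\theta$ with $L^p_u$. Without one of these devices the central analyticity claim of your proof is unsupported.
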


Let $v_0= \frac{\delta^{-n}}{P_{\delta}}\chi_{S_j}(x)$ and $v_1 = u_0 = u_1 = 1$, where $\chi_{S_j}$ denotes the characteristic function on $S_j$. Let $T$ be defined as $T(\psi)=\check{\psi}$, the inverse Fourier
transform of $\psi$. By (\ref{eqthmpackfin3}), we have for each
$j$ and $L$,
$$\|(Tg_L)v_0\|_1\leq C\|\widehat{g_L}\|_1$$
By Plancherel theorem, we have
$$\|(Tg_L)v_1\|_2\leq \|\widehat{g_L}\|_2$$
 Then applying the Theorem
\ref{thmSteinInterpolation}, for $1<p<2$, we have
   \be
     (\delta^{n}P_{\delta})^{p-2}\int_{S_j}|\phi_L*fd\mu(x)|^p dx\leq\ C' \int_{\R^n}|\wh{\phi_L*fd\mu}(\xi)|^pd\xi.\label{eqinterpolateLPRHS}
   \ee
where $C'$ is a non-zero finite constant independent of $f$. Using
(\ref{eqthmpackfin2}), (\ref{eqthmpackfin1}) and
(\ref{eqinterpolateLPRHS}), there exists a constant $C$
independent of $f$, $\epsilon$ and $L$ such that for very large
$L$
   \Bea
        &&\int_E \frac{|f(x)|^p}{(\mu(E'_x)+2\epsilon)^{2-p}} d\mu(x)\\
        &&\ \ \ \leq\ e_{\epsilon_1}\epsilon^{p-2} + C L^{n(p-1)+\alpha} \int_{\R^n}|\wh{\phi_L*fd\mu}(\xi)|^pd\xi.
   \Eea
Since $\phi$ is a Schwartz function such that $\wh{\phi}$ is
supported in unit ball, $\|\wh{\phi}\|_{\infty}\leq 1$ and
$\wh{\phi_L}(\xi)=L^{-n}\wh{\phi}(L^{-1}\xi)$,
   \bee
     \int_E\frac{|f(x)|^p}{(\mu(E'_x)+2\epsilon)^{2-p}}d\mu(x)\ \leq\ e_{\epsilon_1}\epsilon^{p-2} + C\ L^{\alpha+n(p-1)}
     \int_{|\xi|\leq L}\frac{|\wh{fd\mu}(\xi)|^p}{L^{np}}d\xi,
   \eee
for all $r/L\leq \delta_0$, where $\delta_0$ goes to zero as $\epsilon_1<\epsilon/2\rightarrow0$.
Hence letting $\epsilon_1$ to zero, we have
\bee
     \int_E\frac{|f(x)|^p}{[\mu(E'_x)+2\epsilon]^{2-p}}d\mu(x) \leq\ C\ \underset{L\rightarrow\infty}{\liminf} \frac{1}{L^{n-\alpha}} \int_{|\xi|\leq L}|\wh{fd\mu}(\xi)|^pd\xi.
  \eee
Letting $\epsilon$ go to zero,  using
(\ref{eqlemAuxReductionepsilon}), we have
   \bee
     \int_E\frac{|f(x)|^p}{[\mu(E_x)]^{2-p}}d\mu(x) \leq\ C\ \underset{L\rightarrow\infty}{\liminf} \frac{1}{L^{n-\alpha}} \int_{|\xi|\leq L}|\wh{fd\mu}(\xi)|^pd\xi.
  \eee
Hence the proof.\\



In \cite{Hudson}, the authors proved that the above result (\ref{eqthmhud1p2}) fails for $n=1$ without any restriction on the set $E$. It is not known for what optimal value of $k$ and optimal range of $p$, $\underset{L\rightarrow\infty}{\limsup} \frac{1}{L^{k}} \int_{|\xi|\leq L}|\wh{fd\mu}(\xi)|^pd\xi$ is non-zero and finite.

\section{Acknowledgements}
The author would like to express her sincere gratitude to her research supervisor, Prof. E. K. Narayanan for his guidance and
immense support. The author also wishes to thank Prof. Malabika Pramanik and Prof. Robert Strichartz for their valuable remarks. The author is grateful to Prof. Kaushal Verma for the financial assistance provided and UGC-CSIR for financial support. This work is a part of PhD dissertation and is supported in part by UGC Centre for Advanced Studies.


\end{document}